\newcommand{\be}{\begin{equation}}
\newcommand{\ee}{\end{equation}}
\newcommand{\ba}{\begin{array}}
\newcommand{\ea}{\end{array}}
\newcommand{\bea}{\begin{eqnarray}}
\newcommand{\eea}{\end{eqnarray}}
\newcommand{\bee}{\begin{eqnarray*}}
\newcommand{\eee}{\end{eqnarray*}}
\newtheorem{thm}{Theorem}
\newtheorem{lemma}{Lemma}
\newtheorem{prop}{Proposition}
\newtheorem{definition}{Definition}
\newtheorem{cor}{Corollary}
\newtheorem{remark}{Remark}
\newtheorem*{convention*}{Convention}
\newtheorem*{remark*}{Remark}
\newtheorem*{remarks*}{Remarks}
\numberwithin{equation}{section}
\numberwithin{lemma}{section}
\numberwithin{prop}{section}
\numberwithin{definition}{section}
\numberwithin{remark}{section}
\def\section{\@startsection{section}{1}%
  \z@{1.5\linespacing\@plus\linespacing}{.5\linespacing}%
  {\normalfont\bfseries\large\centering}}
\newcommand{\R}{\mathbb{R}}
\newcommand{\C}{\mathbb{C}}
\newcommand{\N}{\mathbb{N}}
\newcommand{\DD}{\Delta}
\newcommand{\RNup}{\mathbb{R}^{N+1}_+}
\renewcommand{\leq}{\leqslant}
\renewcommand{\geq}{\geqslant}
\newcommand{\weakto}{\rightharpoonup}
\newcommand{\eps}{\varepsilon}
\newcommand{\cE}{\mathcal E}
\newcommand{\Hav}{H_{\mathrm{av}}}
\newcommand{\Er}{E^{\mathrm{rad}}}
\def\alphcr{\alpha_*}
\begin{document}

\title[Uniqueness of radial solutions for  the fractional Laplacian]{Uniqueness of radial solutions \\ for the fractional Laplacian}


\author[R. L. Frank]{Rupert L.~Frank}
\address{California Institute of Technology, Department of Mathematics, Pasadena, CA 91125, USA}

\author[E. Lenzmann]{Enno Lenzmann}
\address{University of Basel, Department of Mathematics and Computer Science, Spiegelgasse 1, CH-4051 Basel, Switzerland}

\author[L. Silvestre]{Luis Silvestre}
\address{University of Chicago, Department of Mathematics, 5734 S. University Avenue, Chicago, IL 60637, USA}

\maketitle

\begin{abstract}
We prove general uniqueness results for radial solutions of linear and nonlinear equations involving the fractional Laplacian $(-\Delta)^s$ with $s \in (0,1)$ for any space dimensions $N \geq 1$. By extending a monotonicity formula found by Cabr\'e and Sire \cite{CaSi-10}, we show that the linear equation
$$
(-\Delta)^s u+ Vu = 0 \ \ \mbox{in} \ \ \R^N
$$
has at most one radial and bounded solution vanishing at infinity, provided that the potential $V$ is radial and non-decreasing. In particular, this result implies that all radial eigenvalues of the corresponding fractional Schr\"odinger operator $H=(-\Delta)^s + V$ are simple. Furthermore, by combining these findings on linear equations with topological bounds for a related problem on the upper half-space $\R^{N+1}_+$, we show uniqueness and nondegeneracy of ground state solutions for the nonlinear equation
$$
(-\Delta)^s Q + Q - |Q|^{\alpha} Q = 0 \ \ \mbox{in} \ \ \R^N
$$
for arbitrary space dimensions $N \geq 1$ and all admissible exponents $\alpha >0$. This generalizes the nondegeneracy and uniqueness result for dimension $N=1$ recently obtained by the first two authors in \cite{FrLe-10} and, in particular, the uniqueness result for solitary waves of the Benjamin--Ono equation found by Amick and Toland \cite{AmTo-91}.
\end{abstract}


\section{Introduction and Overview on Main Results}

The purpose of this paper is to derive uniqueness and oscillation results for radial solutions of linear and nonlinear equations that involve the fractional Laplacian $(-\DD)^s$ with $s \in (0,1)$ in arbitrary space dimension $N \geq 1$. In contrast to the situation with local differential operators, it is evident that the theory of ordinary differential equations (ODE)  itself does not provide any means to establish such results. In particular, classical tools such as Sturm comparison, Wronskians, Picard--Lindel\"of iteration, and shooting arguments (which are all purely local concepts) are not at our disposal when analyzing radial solutions in the setting of the nonlocal operator $(-\DD)^s$. Rather, these methods need to be replaced by suitable substitutes based on different arguments that will be developed in the present paper. So far, the lack of such ODE-type results for pseudo-differential operators such as $(-\DD)^s$ has resulted in a list of open problems, conjectures, and spectral assumptions, supported by numerical evidence or verified in some exactly solvable special cases (see, e.\,g., \cite{AmTo-91, Li-04, ChLiOu-06}), as well as some recent more general result for $N=1$ dimension obtained by the first two authors of the present paper in \cite{FrLe-10}. See below for further details and a brief review of the literature on this.

In the present paper, we improve this situation by developing a set of general arguments that establish ODE-type theorems for radial solutions in the fractional setting with $(-\DD)^s$. In fact, most of the results derived here can be extended to a broader class of pseudo-differential operators, as we will indicate in some detail below. However, for the sake of concreteness, we shall consider cases that involve the fractional Laplacian throughout this paper. The main results derived below can be summarized as follows.


\subsection*{Uniqueness of Nonlinear Ground States in $\R^N$ with $N \geq 1$}  

We prove uniqueness and nondegeneracy of ground state solutions $Q \in H^s(\R^N)$ for the nonlinear problem
\be \label{eq:Qfirst}
(-\DD)^s Q + Q - |Q|^{\alpha} Q = 0 \ \ \mbox{in} \ \ \R^N,
\ee
in arbitrary dimension $N \geq 1$ and any admissible exponent $0 < \alpha < \alpha_*$, where the critical exponent $\alpha_*=\alpha_*(s,N)$ is given in \eqref{def:alphacr} below. In particular, this result generalizes (in an optimal way) the recent result by the first two authors in \cite{FrLe-10} in $N=1$ dimension and settles conjecture by Kenig et al.~\cite{KeMaRo-11} and Weinstein \cite{We-87} in any dimension $N \geq 1$. In particular, it generalizes the classical uniqueness result by Amick and Toland \cite{AmTo-91} on the uniqueness of solitary waves for the Benjamin--Ono equation. In the local case when $s=1$, the uniqueness and nondegeneracy of ground states for problem \eqref{eq:Qfirst} was established in a celebrated paper by Kwong \cite{Kw-89} (see also \cite{Co-72,Mc-93}), which provides an indispensable basis for the blowup analysis as well as the stability of solitary waves for related time-dependent equations such as the nonlinear Schr\"odinger equation (NLS) (see e.\,g.~\cite{We-85, MeRa-05}). Further below we will briefly discuss how the results about \eqref{eq:Qfirst} derived below in the fractional setting when $s \in (0,1)$ will be central in the analysis of time-dependent problems such as the generalized Benjamin--Ono equation and critical fractional NLS. In terms of interpolation estimates, the uniqueness result about ground states for equation \eqref{eq:Qfirst} classifies all optimizers of a related fractional Gagliardo--Nirenberg--Sobolev inequality; see below.

In view of the recent result on ground states for \eqref{eq:Qfirst} in $N=1$ dimension, we mention that the higher dimensional case with $N \geq 2$ can be settled here with the help of two new key results on linear equations with $(-\DD)^s$ derived below. In fact, although  the uniqueness of ground states for \eqref{eq:Qfirst} is a global nonlinear result, we shall see (and exploit) an intimate connection to the linear results described next.

\subsection*{Radial Uniqueness for Linear Equations} The fact that ODE techniques are not applicable for the fractional Laplacian in the radial setting has so far resulted in a big gap of results in the spirit of  shooting arguments. In particular, the well-known and essential fact that a radial solution $u=u(r)$ of the linear equation $-\DD u + V u  = 0$ on $\R^N$ (where $V$ obeys a mild regularity condition) satisfies $u(0)=0$ if and only if $u \equiv 0$ has had no counterpart in the fractional setting with $(-\DD)^s$ up to now.  

Here, we shall fill this gap by proving the following result: For radial and non-decreasing potentials $V=V(r)$ in some H\"older class, we show that any radial and bounded solution $u=u(r)$ vanishing at infinity and solving the linear equation
\be \label{eq:Vfirst}
(-\DD)^s u + Vu = 0 \ \ \mbox{in} \ \ \R^N
\ee
satisfies $u(0)=0$ if and only if $u \equiv 0$ on $\R^N$. By linearity of the problem, this is equivalent to saying that equation \eqref{eq:Vfirst} has at most one radial and bounded solution $u(r)$ vanishing at infinity. This result can be seen as a key substitute for shooting arguments in the ODE setting. Hence it turns out to be essential for the understanding of linear and nonlinear radial problems involving the fractional Laplacian.

The proof of this radial uniqueness result involves a sort of an energy argument based on a monotonicity formula for $(-\DD)^s$ (see below). Note that the condition that $V(r)$ is radial and non-decreasing (which physically corresponds to an attractive potential) arises naturally in many situations. In particular, this property of $V$ will naturally be satisfied by the linearized operator $L_+=(-\DD)^s + 1 - (\alpha+1) Q^\alpha$ around the ground states $Q=Q(r) >0$ of problem \eqref{eq:Qfirst}, which is known to be decreasing function in $r=|x|$.

\subsection*{Simplicity of Radial Eigenvalues for Fractional Schr\"odinger Operators}
As a direct consequence of the uniqueness result about \eqref{eq:Vfirst} above, we obtain that all radial eigenvalues of the corresponding fractional Schr\"odinger operator $H=(-\DD)^s + V$ on $\R^N$ are simple. This spectral result, which is a classical fact for $s=1$ by ODE techniques, will be of essential use when deriving the nondegeneracy and then the uniqueness of ground state solution $Q$ for the nonlinear problem \eqref{eq:Qfirst}.

\subsection*{Sturmian Oscillation Estimates} 
For operators $H=(-\DD)^s + V$ as above, we show that its second radial eigenfunction changes its sign exactly once on the half-line $(0,+\infty)$. This result can be regarded as an analogue as the classical oscillation bound for classical Sturm--Liouville problems. In particular, this optimal oscillation result generalizes the result in \cite{FrLe-10} to arbitrary dimension $N \geq 1$. Furthermore, such an oscillation estimate is a central ingredient in the proof of the nondegeneracy of the radial ground state $Q$ for problem \eqref{eq:Qfirst}.

\subsection*{Sketch of Main Ideas}
The proof of the main results mentioned above involve the following three concepts.

\begin{itemize}
\item Topological bounds on the nodal structure of the solutions for equation with $(-\DD)^s$, which are obtained by considering a related problem on the upper half-space $\R^{N+1}_+$ with local differential operators.
\item Homotopic argument by continuation in $s$. That is, we construct solutions $u=u_s$ parameterized by the index $s \in (s_0,1)$ of the fractional Laplacian, with $s_0 \in (0,1)$ fixed. Taking the limit $s \nearrow 1$ yields global information about the branch $u_s$, which can be deduced from the limiting problem at $s=1$ known by classical ODE techniques.   
\item A monotonicity formula for radial solutions for $(-\DD)^s$ with $s \in (0,1)$ on $\R^N$. 
\end{itemize}

We now briefly sketch the three main ideas as follows. How these arguments enter in the individual proofs will be seen further below.  

\subsubsection*{Topological Bounds via Extension to $\R^{N+1}_+$}  Using \cite{CaSi-07} we express the nonlocal operator $(-\DD)^s$ on $\R^N$ with $s \in (0,1)$ as a generalized Dirichlet--Neumann map for a certain elliptic boundary-value problem with local differential operators defined on the upper half-space $\R^{N+1}_+ = \{ (x,t) : x \in \R^N, \, t > 0\}$. That is, given a solution $u=u(x)$ of $(-\DD)^s u = f$ in $\R^N$, we can equivalently consider the dimensionally extended problem for $u=u(x,t)$, still denoted by $u$ for simplicity, which solves 
\be \label{eq:exten_intro}
\left \{ \begin{array}{ll} \mathrm{div} \, (t^{1-2s} \nabla u) = 0 & \quad \mbox{in $\R^{N+1}_+$}, \\
-d_s t^{1-2s} \partial_t u |_{t \to 0} = f&  \quad \mbox{on $\partial \R^{N+1}_+$} . \end{array} \right . 
\ee
Here the positive constant $d_s>0$ is explicitly given by
\be \label{eq:ds}
d_s = 2^{2s-1} \frac{\Gamma(s)}{\Gamma(1-s)}.
\ee 
In particular, the reformulation \eqref{eq:exten_intro} in terms of local differential operators plays a central role when deriving bounds on the number of sign changes for eigenfunctions of fractional Schr\"odinger operators $H=(-\DD)^s + V$. In \cite{FrLe-10} this idea was implemented for $N=1$ dimensions to obtain certain sharp oscillation bounds for eigenfunctions of $H$. However, the case of higher dimensions has been left open, due to the topological fact that $\R^{N+1}_+ \setminus \{ (0,t) : t >0 \}$ is a connected set if $N \geq 2$, whereas it is not connected if $N=1$, which was needed in \cite{FrLe-10}.

In this paper, we will overcome the restriction to $N=1$ and will be able to treat arbitrary space dimension $N \geq 1$, by combining the extension method with a continuation argument in $s$ and a monotonicity formula for radial solutions involving the fractional Laplacians $(-\DD)^s$ with $s\in (0,1)$. See below.  

\subsubsection*{Homotopic Argument by Continuation in $s$} The idea behind this method is to make a continuation  argument with respect to the power $s \in (0,1)$ appearing in $(-\DD)^s$. More precisely, starting from some solution $u_0$ of 
$$
(-\DD)^{s_0} u_0 + f_{s_0}(u_0,x) = 0 \ \ \mbox{in} \ \ \R^N, 
$$
we embed this problem into a suitable family of equations parameterized by $s$. That is, we seek to construct a branch $u_s$ with $s$ close to $s_0$ solving the problem  
$$
(-\DD)^s u_s + f_s(u_s,x) = 0 \ \ \mbox{in} \ \ \R^N.
$$
The local existence and uniqueness (in some function space) for $u_s$ with $u_{s=s_0} = u_{0}$ follows from an implicit function argument, provided the linearization around $u_0$ is invertible, which is the first key step in the argument. The second key point is now to derive suitable a-priori bounds that guarantee that the branch $u_s$ can be extended all the way to $u_s$ as $s \nearrow 1$ converging to a nontrivial limit. Typically, the limiting problem with $s=1$ can be well-understood by ODE methods and, by an open-closed argument, we obtain information for the whole branch $u_s$ for $s \in [s_0,1]$. We will use these arguments in suitable variations when deriving sharp oscillations estimates for the second radial eigenfunction of $H=(-\DD)^s + V$, as well as when we show the global uniqueness of ground states $Q$ for equation \eqref{eq:Qfirst}.
  
\subsubsection*{Monotonicity Formula for $(-\DD)^s$} In a recent and remarkable article \cite{CaSi-10}, Cabr\'e and Sire introduced and exploited a monotone quantity for so-called layer solutions as well as radial solutions of nonlinear equations of the form $(-\DD)^s v = f(v)$ on $\R^N$. Inspired by their work, we formulate a monotone quantity for radial solutions of the linear equation $(-\DD)^s u + Vu =0$ on $\R^N$, by defining 
$$
H(r) = d_s \int_0^{+\infty} \frac{t^{1-2s}}{2} \left \{ u_r^2(r,t) - u_t^2(r,t) \right \}  dt - \frac{1}{2} V(r) u(r)^2 ,
$$ 
where $u=u(r,t)$ with $r=|x|$, $x \in \R^N$, and $t >0$, denotes the extension to the upper-half space $\R^{N+1}_+$ of $u(r)=u(r,0)$ that satisfies problem \eqref{eq:exten_intro} above, where $d_s > 0$ is the same constant as above. In fact, it turns out  (formally at least)  that $H'(r) \leq 0$ holds, provided the potential $V$ is non-decreasing, and hence $H(r)$ is non-increasing under this assumption on $V$. By using the monotonicity of $H(r)$, we  conclude a rather general uniqueness result for radial solutions that vanish at infinity and solve the linear equation $(-\DD)^s u + Vu = 0$ in $\R^N$, saying that $u(0)=0$ implies that $u \equiv 0$ on $\R^N$. For a precise statement, see Theorem \ref{thm:linear1} below.

\subsection*{Review of Known Results}

We briefly recap the results about uniqueness for solutions $u=u(x)$ of problem having the form
\be \label{eq:ugeneral}
\left \{ \begin{array}{l} Ê(-\DD)^s u + f(u,x) = 0  \ \ \mbox{in} \ \ \R^N, \\
u(x) \to 0 \ \ \mbox{as} \ \ |x| \to +\infty. \end{array} \right .
\ee
with $s \in (0,1)$. As usual, $f(u,x)$ stands for some given nonlinear or linear function; e.\,g., a nonlinearity $f(u) = u-u^{\alpha+1}$ with some $\alpha >0$ or a linearity $f(u,x) = V(x) u$ with some given potential $V$.

In the cases of interest, the existence of nontrivial solutions of equation \eqref{eq:ugeneral} can be deduced by standard variational methods adapted to $(-\DD)^s$.  However, in contrast to the classical case $s=1$, very little is known in general (except from a few situations discussed below) about uniqueness of radial solutions for problems of the form \eqref{eq:ugeneral}. Indeed, even the situation of linear $f(u,x) = V(x) u$ has not been understood so far in rudimentary terms.

\subsubsection*{Nonlinear Case} 

For nonlinear problems of the form \eqref{eq:ugeneral}, the known nonperturbative uniqueness results can be summarized as follows. (See \cite{KeMaRo-11, FaVa-13} for  some perturbative uniqueness results when $s$ is close to $1$.)

\begin{itemize}

\item Benjamin--Ono equation: In \cite{AmTo-91} Amick and Toland proved that uniqueness  (up to translations) of the nontrivial solution $Q \in H^{1/2}(\R)$ of 
$$
(-\DD)^{1/2} Q + Q - Q^2 = 0 \ \ \mbox{in} \ \ \R.
$$
In fact, the unique family of solutions $Q(x) = \frac{2}{1+(x-x_0)^2}$ with $x_0 \in \R$ is known in closed form. The proof in \cite{AmTo-91} relies on an intriguing reformulation of the problem in terms of complex analysis and makes also strong use of the fact the nonlinearity is quadratic. However, the methods seem to be rather specific. Therefore, generalizing the proof of Amick and Toland to $(-\DD)^s$ with $s \in (0,1)$, different powers $Q^{\alpha+1}$ with $\alpha \neq 1$, and dimension $N \geq 2$ does not appear to be achievable.

\item In \cite{Li-04,ChLiOu-06} it was shown independently by Y.~Y.~Li and Chen, Li, and Ou that for $s \in (0,N/2)$ and $Q \in L^{\frac{2N}{N-2s}}_{\mathrm{loc}}(\R^N)$ that the (energy-critical) equation
$$
(-\DD)^{s} Q - Q^{\frac{N+2s}{N-2s}} = 0 \ \ \mbox{in} \ \ \R^N
$$
has a unique positive solution $Q(r) > 0$ up to scaling and translation. However, both the uniqueness proofs in \cite{Li-04,ChLiOu-06} make essential use of the fact that this problem exhibits conformal symmetry. Apart from nonexistence results in the energy-subcritical case (see \cite{ChLiOu-05}), an extension of the methods in \cite{Li-04,ChLiOu-06} to prove uniqueness of positive solutions $Q(r)>0$ to a broader class of equations seems out of scope.

\item In \cite{FrLe-10} the first two authors of this paper proved uniqueness and nondegeneracy of ground state $Q \in H^{s}(\R)$ for 
$$
(-\DD)^s Q + Q - Q^{\alpha+1} = 0 \ \ \mbox{in} \ \ \R.
$$
for all $s \in (0,1)$ and all $H^s$-admissible powers $0 < \alpha < \alpha_*$; see \eqref{def:alphacr} below for the definition of $\alpha_*>0$. The proof given in \cite{FrLe-10} develops a general strategy, but it needed the assumption of restricting to $N=1$ space dimension in one crucial step. This basic dimensional obstruction will be overcome in the present paper.
\end{itemize}

\subsubsection*{Linear Case}
For linear problems of the form \eqref{eq:ugeneral}, the question of uniqueness of radial solutions has not been well understood either. A notable but rather specific case arises when $u$ is known to be the ground state of a linear fractional Schr\"odinger operator $H=(-\DD)^s + V$. (By shifting the potential $V$, we can always assume that the lowest eigenvalue $E_1$ of $H$ satisfies $E_1=0$) Then, by standard Perron--Frobenius methods (see Appendix \ref{app:misc}) it follows that $u(x) > 0$ is strictly positive and that the lowest eigenvalue $E_1=0$ is simple.  In particular, we obtain uniqueness of solutions to the linear problem up to multiplicative constants.

However, in many interesting cases linear problems of the form \eqref{eq:ugeneral}, solutions $u$ are {\em not ground states} of some fractional Schr\"odinger $H=(-\DD)^s + V$. In particular, a crucial part in the analysis of blowup and stability solitary wave solutions lead to the study of higher radial eigenfunctions $u$ of some fractional Schr\"odinger operator. Here Perron--Frobenius arguments are of no use and, consequently, the question of uniqueness of such solutions $u$ need to be addressed by novel arguments. Along with this, oscillation estimates for higher radial eigenfunctions $u$ for fractional Schr\"odinger operators $H$ are of central interest.   This will be addressed below too. Let us also mention the oscillation and simplicity results for the spectrum of $\sqrt{-\Delta}$ on the interval $I= (-1,1)$ (with exterior Dirichlet conditions on the complement $I^c$) obtained by Ba{\~n}uelos and Kulczycki \cite{BaKu-04} ; see also \cite{KuKwMaSt-10,Kw-12} for improvements to $(-\DD)^s$ with $s \in [1/2,1)$ in this one-dimensional setting. However, the arguments given in these works do not seem to be extendable to a more general class of operators $H=(-\DD)^s + V$ in general dimension $N \geq 1$ and arbitrary powers $s\in (0,1)$.

\subsection*{Generalizations to other Pseudo-Differential Operators}

The generalization of the main results about linear equation (i.\,e.~Theorems 1 and 2 below) to pseudo-differential operators $L$ beyond the fractional Laplacian on $\R^N$ is feasible, provided they can be regarded as certain Dirichlet--Neumann maps. Important examples (which will be treated in future work) are as follows.
\begin{itemize}
\item $L=(-\DD)^s$ with $s \in (0,1)$ on any open ball $B_R = \{ x \in \R^N : | x | < R \}$ with exterior Dirichlet condition on $B_R^c$. 
\item $L=(-\DD)^s$ with $s\in (0,1)$ on the $N$-dimensional hyperbolic space $\mathbb{H}^N$. 
\item $L=(-\DD + m^2)^{s/2}$ with $s \in (0,1)$ and $m>0$ on $\R^N$, $B_R$, or $\mathbb{H}^N$.
\item $L= (-\DD)^{1/2} \coth (-\DD)^{1/2}$ on $\R$. This pseudo-differential operator arises in the intermediate long-wave equation modeling water waves. 
\end{itemize} 
We refer to \cite{BaKu-04,BaGoSa-12,AbBoFeSa-89} for more details on these operators and their occurrence in probability, geometry and physics.

Regarding the nonlinear main results (i.\,e.~Theorems \ref{thm:nondeg} and \ref{thm:unique} below) about nondegeneracy and uniqueness of ground states $Q$, we remark that an extension to nonlinearities $f(u)$ beyond the pure-power case seems to be a challenging open problem.

\subsection*{Plan of the Paper} 
We organize this paper as follows. In Sections \ref{sec:linear} and \ref{sec:nonlinear}, we state the linear and nonlinear main results, respectively. The proof of Theorem \ref{thm:linear1} (linear uniqueness result) will be given in Section \ref{sec:mono} by using the aforementioned monotonicity formula for $(-\DD)^s$ in the class of radial solutions. The Sections \ref{sec:osc} and  \ref{sec:proofthm2} are devoted to the proof of Theorem \ref{thm:linear2} (linear oscillation result). Finally, in Sections \ref{sec:nondeg} and \ref{sec:unique}, we prove the nonlinear main results; i.\,e.~Theorems \ref{thm:nondeg} and \ref{thm:unique} about nondegeneracy and uniqueness of ground states $Q$ for the nonlinear problem \eqref{eq:Qfirst}.

The appendix contains a variety of technical results (such as regularity and uniform estimates) needed in the main part of this paper.

\subsection*{Notation and Conventions}
Throughout this paper, we employ the common abuse of notation by writing both $f=f(|x|)$ and $f=f(r)$ for any radial functions $f$ on $\R^N$. We use standard notation for $L^p$ and Sobolev spaces and $L^{p}_{\mathrm{rad}}(\R^N)$ denotes the space of radial and square-integrable functions on $\R^N$. For $k \in \mathbb{N}_0$ and $0 < \gamma \leq 1$ the H\"older space $C^{k, \gamma}(\R^N)$ is equipped with the norm $\| u \|_{C^{k,\gamma}} = \sum_{ |\alpha| \leq k} \| \partial_x^\alpha u \|_{L^\infty} + \| u \|_{C^\gamma}$, where $\| u \|_{C^\gamma} = \sup_{x \neq y} \frac{ |u(x)-u(y)|}{|x-y|^\gamma}$. We often write $L^p$ instead of $L^p(\R^N)$ etc.

We employ the following convention for constants in this paper: Unless otherwise explicitly stated, we write 
$$
X \lesssim_{a,b,c,\ldots} Y
$$
to denote that $X \leq C Y$ with some constant $C> 0$ that only depends on the quantities $a,b,c,\ldots$ and the space dimension $N \geq 1$. Moreover, we require that $C>0$ can be chosen uniform if $a,b,c,\ldots$ range in some fixed compact set.

\subsection*{Acknowledgments} R.\,F~thanks Elliott Lieb for useful discussions and Iosif Polterovich for pointing out reference \cite{AlMa-94}. R.\,F.~acknowledges financial support from the NSF grants PHY-1068285, PHY-1347399, and DMS-1363432. E.\,L.~expresses his deep gratitude to J\"urg Fr\"ohlich for his constant support, interest, and inspirations revolving around $(-\DD)^s$. Moreover, E.~L.~acknowledges financial support from the Swiss National Science Foundation (SNF). In addition, R.\,F.~and E.\,L.~thank the Isaac Newton Institute for its kind hospitality in August 2012, where parts of this work were done. L.~S.~acknowledges financial support from the NSF grants DMS-1001629 and DMS-1065979. Finally, the authors thank the anonymous referees for valuable comments.

\section{Linear Main Results} 

\label{sec:linear}

Let $N \geq 1$ and $s \in (0,1)$ be given. We consider the linear equation
\be \label{eq:linear}
 (-\Delta)^s u + V u = 0 \ \ \mbox{in} \ \ \R^N.
\ee
We require that the potential $V: \R^N \to \R$ satisfies the following conditions.

\begin{enumerate}
\item[(V1)] $V=V(|x|)$ is radial and non-decreasing in $|x|$.
\item[(V2)] $V \in C^{0,\gamma}(\R^N)$ for some $\gamma > \max \{ 0, 1-2s \}$.
\end{enumerate}
Throughout the rest of this section, we shall assume that the potential $V$ in equation \eqref{eq:linear} satisfies the above conditions (V1) and (V2). Recall that the conditions $V$ belongs to  $C^{0,\gamma}$ means in particular that $V$ is bounded. 

The first main result establishes the following basic uniqueness result for radial and bounded solutions to \eqref{eq:linear} that vanish at infinity.\footnote{By this, we mean that  the Lebesgue measure of $\{ x \in \R^N : |u(x)|  >  \alpha \}$ is finite for every $\alpha > 0$.}

\begin{thm} \label{thm:linear1}
Let $N \geq 1$ and $s \in (0,1)$. Suppose that $u=u(|x|)$ is a radial and bounded solution of the linear equation \eqref{eq:linear} and that $u$ vanishes at infinity. Then $u(0)=0$ implies that $u \equiv 0$.

Equivalently, we have that the linear equation \eqref{eq:linear} has at most one bounded and radial solution that vanishes at infinity.
\end{thm}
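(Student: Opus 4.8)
The plan is to use the Caffarelli--Silvestre extension together with the monotonicity formula for the quantity $H(r)$ introduced in the sketch of ideas. Given a radial, bounded solution $u=u(r)$ of \eqref{eq:linear} vanishing at infinity, I would first invoke the regularity theory (relegated to the appendix) to know that $u \in C^{0,\gamma'}$ for a suitable $\gamma'$, so that the extension $u=u(r,t)$ solving \eqref{eq:exten_intro} with $f = -Vu$ is well-defined and sufficiently smooth in the open half-space, with the Neumann-type boundary condition $-d_s t^{1-2s}\partial_t u|_{t\to 0} = -V(r) u(r)$. One then defines, for $r > 0$,
\[
H(r) = d_s \int_0^{\infty} \frac{t^{1-2s}}{2}\left\{ u_r^2(r,t) - u_t^2(r,t)\right\} dt - \frac{1}{2} V(r) u(r)^2 .
\]
The core computation is to differentiate $H$ in $r$, integrate by parts in $t$ using the equation $\mathrm{div}(t^{1-2s}\nabla u)=0$ and the boundary condition, and collect terms. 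After cancellations the surviving contributions should be a manifestly non-positive bulk term coming from the radial (Laplace--Beltrami) part of the divergence in dimension $N$ — which produces a factor like $-(N-1)r^{-1}\int t^{1-2s} u_r^2\,dt \le 0$ — together with the term $-\tfrac12 V'(r) u(r)^2 \le 0$, which is where hypothesis (V1) that $V$ is non-decreasing enters. Hence $H'(r)\le 0$, so $H$ is non-increasing on $(0,\infty)$.

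Next I would extract the boundary behavior at the two ends. As $r\to\infty$, since $u$ and its extension decay (using that $u$ vanishes at infinity plus the decay estimates for the extension from the appendix), $H(r)\to 0$; combined with monotonicity this forces $H(r)\ge 0$ for all $r>0$. On the other hand, the hypothesis $u(0)=0$ should give $\lim_{r\to 0^+} H(r) \le 0$: the potential term vanishes because $u(0)=0$ and $V$ is bounded, the $u_r^2$ term is non-negative but the $-u_t^2$ term is non-positive, and one needs to argue that near the axis the $u_r^2$ contribution is controlled — this is plausible because $u(0)=0$ together with $C^{0,\gamma'}$ regularity and the structure of the extension near $\{r=0\}$ should make $\int t^{1-2s} u_r^2\,dt$ small, while the energy $\int t^{1-2s} u_t^2\, dt$ stays bounded below away from zero unless $u$ is trivial. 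Putting the two inequalities together yields $H\equiv 0$, hence $H'\equiv 0$, which forces $V'(r)u(r)^2 \equiv 0$ and, more usefully, $(N-1)r^{-1}\int_0^\infty t^{1-2s} u_r^2(r,t)\,dt \equiv 0$ for $N\ge 2$; for $N=1$ one instead reads off that $\int_0^\infty t^{1-2s}(u_r^2-u_t^2)\,dt = V(r)u(r)^2$ for all $r$ and combines this with $H\ge0$ and the decay to propagate triviality. Either way the conclusion is that $u_r(r,t)\equiv 0$, so $u$ is constant in $r$, hence constant on $\R^N$, and the vanishing-at-infinity condition forces $u\equiv 0$. The equivalent reformulation (at most one bounded radial solution vanishing at infinity) then follows by linearity: the difference of two such solutions vanishes at infinity, and one applies the uniqueness at a point $r_0$ where it vanishes — or, more carefully, one needs first to show any nontrivial such solution cannot vanish at $r=0$, which is exactly the contrapositive just proved; a short separate argument handles the case where the difference never vanishes.

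The main obstacle I anticipate is not the formal monotonicity computation but the \emph{justification of the boundary terms}, especially the behavior of $H(r)$ as $r\to 0^+$ and as $r\to\infty$. Near $r=0$ one must understand the precise regularity and structure of the extension $u(x,t)$ near the singular set $\{x=0\}$ — the weight $t^{1-2s}$ is degenerate/singular and the radial variable adds another coordinate singularity — and turn the pointwise condition $u(0)=0$ into the required smallness of the weighted Dirichlet integral in $t$; this is where (V2) and the Hölder regularity theory for $(-\Delta)^s$ do the work. At infinity one needs quantitative decay of $u(r,t)$ and $\nabla u(r,t)$ inherited from $u$ vanishing at infinity, uniformly in $t$, which again relies on the uniform estimates collected in the appendix. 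Making these two limiting statements rigorous — rather than the algebra of $H'(r)\le 0$ — is the heart of the proof.
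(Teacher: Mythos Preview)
Your overall strategy---the Cabr\'e--Sire-type monotone quantity $H(r)$ on the extension, $H'\le 0$, $H(\infty)=0$, and $H(0)\le 0$ when $u(0)=0$---is exactly the paper's. Two points need correction.

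\medskip
\textbf{The limit at $r=0$.} Your reason for why the $u_r^2$-term is harmless at the axis is wrong: it has nothing to do with the hypothesis $u(0)=0$. The extension $u(x,t)$ is cylindrically symmetric in $x$ because $u$ is radial and the Poisson kernel is radial; with the $C^{1,\beta}$ regularity (which the paper obtains from (V2)), this forces $u_r(0,t)=0$ for every $t>0$ outright. Hence
\[
H(0)=-d_s\int_0^\infty \frac{t^{1-2s}}{2}\,u_t^2(0,t)\,dt-\tfrac12 V(0)u(0)^2,
\]
and only now does the hypothesis $u(0)=0$ enter, to give $H(0)\le 0$.

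\medskip
\textbf{The case $N=1$.} Here your argument has a genuine gap. When $N=1$ the term $-(N-1)r^{-1}\int t^{1-2s}u_r^2\,dt$ is absent from $H'$, so $H\equiv 0$ and $H'\equiv 0$ only yield $V'(r)u(r)^2\equiv 0$; under the sole assumption $V'\ge 0$ this does \emph{not} give $u_r\equiv 0$, and your sentence ``either way the conclusion is that $u_r(r,t)\equiv 0$'' is unjustified. The paper closes this case by a completely different route: from $H(0)=0$ and $u(0)=0$ one gets $\int_0^\infty t^{1-2s}u_t^2(0,t)\,dt=0$, hence $u_t(0,t)\equiv 0$; since $u(0,t)\to 0$ as $t\to\infty$ (from the Poisson formula and the vanishing of $u$ at infinity), this forces $u(0,t)\equiv 0$. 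Differentiating the Poisson representation of $u(0,t)$ repeatedly in $t$ and evaluating at $t=1$ then yields
\[
\int_0^\infty \frac{u(y)}{(1+y^2)^{(1+2s)/2+k}}\,dy=0 \quad\text{for all }k\in\mathbb N_0,
\]
and a change of variables plus Weierstrass' theorem gives $u\equiv 0$. You should replace your $N=1$ paragraph with this argument (or, at minimum, flag that the monotonicity formula alone is insufficient there).
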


\begin{remark*} {\em
 By regularity estimates (see below), we actually have that $u \in C^{1, \beta}(\R^N)$ holds for some $\beta \in (0,1)$. In particular, the statement $u(0) = 0$ makes  sense. Furthermore, since $u$ vanishes at infinity by assumption, this H\"older estimate implies that $u$ tends to zero pointwise at infinity, i.\,e., it holds that $u(|x|) \to 0$ as $|x| \to +\infty$.  
}

\end{remark*}

An immediate  application of Theorem \ref{thm:linear1} arises for fractional Schr\"odinger operators $H=(-\Delta)^s + V$ with radial potentials $V(r)$ as above. In this case, we can study the restriction of $H$ to the sector of radial functions. Furthermore, by regularity estimates (see below) we conclude that any $L^2$-eigenfunction of $H$ is bounded. Clearly, the shifted potential $V-E$ also satisfies conditions (V1) and (V2) above. Thus we can apply Theorem \ref{thm:linear1} to deduce the simplicity of all eigenvalues of $H$ in the sector of radial functions.

\begin{cor} \label{cor:linear1}
Suppose $N \geq 1$, $s \in (0,1)$, and let $V$ be as above. Consider $H=(-\Delta)^s + V$ acting on $L^2_{\mathrm{rad}}(\R^N)$. Then all  eigenvalues of $H$ are simple. 

In particular, if $\Er_1 \leq  \Er_2 \leq \Er_3  \leq  \cdots <\inf \sigma_{\mathrm{ess}}(H)$ denote (with counting multiplicity) the discrete eigenvalues of $H$ acting on $L^2_{\mathrm{rad}}(\R^N)$, then we have strict inequalities
$$
\Er_1 < \Er_2 < \Er_3 <  \cdots < \inf \sigma_{\mathrm{ess}}(H).
$$
\end{cor}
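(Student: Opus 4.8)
The plan is to deduce Corollary \ref{cor:linear1} directly from Theorem \ref{thm:linear1} by a routine reduction. First I would observe that it suffices to prove that every eigenvalue $E$ of $H=(-\DD)^s+V$ acting on $L^2_{\mathrm{rad}}(\R^N)$ is simple; the chain of strict inequalities $\Er_1 < \Er_2 < \cdots$ is then just a restatement, since an eigenvalue of multiplicity $\geq 2$ would force one of the inequalities in $\Er_1 \leq \Er_2 \leq \cdots$ to be an equality. So fix a discrete eigenvalue $E < \inf \sigma_{\mathrm{ess}}(H)$ and let $u \in L^2_{\mathrm{rad}}(\R^N)$ be a corresponding eigenfunction, i.e. $(-\DD)^s u + (V-E)u = 0$ in $\R^N$.

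The key point is to verify that $u$ falls within the scope of Theorem \ref{thm:linear1} with potential $\widetilde V \defeq V - E$. I would check the hypotheses one by one. The potential $\widetilde V$ is radial and non-decreasing in $|x|$ because $V$ is, by (V1), and subtracting the constant $E$ changes neither property; and $\widetilde V \in C^{0,\gamma}(\R^N)$ with $\gamma > \max\{0,1-2s\}$ because $V$ does by (V2) and constants are smooth. Next, $u$ is radial by construction, and $u$ vanishes at infinity in the required sense: since $u \in L^2(\R^N)$, the set $\{|u| > t\}$ has finite Lebesgue measure for every $t>0$ by Chebyshev's inequality. Finally, $u$ is bounded: by the regularity estimates referenced in the excerpt (the remark following Theorem \ref{thm:linear1} and the appendix on regularity), any $L^2$-eigenfunction of $H$ lies in $C^{1,\beta}(\R^N)$ for some $\beta \in (0,1)$, hence in $L^\infty(\R^N)$, and moreover $u(|x|) \to 0$ pointwise as $|x| \to +\infty$.

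With the hypotheses verified, Theorem \ref{thm:linear1} applies and gives the following dichotomy for any radial bounded solution vanishing at infinity: either $u(0) = 0$, in which case $u \equiv 0$, or $u(0) \neq 0$, and then $u$ is uniquely determined (up to this point value, by linearity and the ``at most one'' formulation). Concretely, if $u_1, u_2$ are two radial eigenfunctions for the same eigenvalue $E$, set $w \defeq u_2(0) u_1 - u_1(0) u_2$. Then $w$ is again a radial bounded solution of $(-\DD)^s w + \widetilde V w = 0$ vanishing at infinity, and $w(0) = 0$, so Theorem \ref{thm:linear1} forces $w \equiv 0$; hence $u_1, u_2$ are linearly dependent (note that $u_1(0)$ and $u_2(0)$ cannot both vanish, since a nonzero eigenfunction with $u(0)=0$ is excluded by the theorem). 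Therefore the radial eigenspace for $E$ is one-dimensional, i.e. $E$ is simple.

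The argument is essentially bookkeeping, so there is no real obstacle, but the one step deserving care is the boundedness and decay of $L^2$-eigenfunctions: one must invoke the bootstrap regularity theory for $(-\DD)^s + V$ with $V \in C^{0,\gamma}$ (as developed in the appendix) to promote $u \in L^2$ to $u \in C^{1,\beta} \cap L^\infty$ with pointwise decay, so that the evaluation $u(0)$ is meaningful and the ``vanishing at infinity'' hypothesis of Theorem \ref{thm:linear1} is genuinely met. Everything else is immediate from linearity and the equivalent ``at most one solution'' formulation of Theorem \ref{thm:linear1}.
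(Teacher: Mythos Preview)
Your proposal is correct and follows essentially the same route as the paper: shift the potential to $\widetilde V = V - E$, note that (V1)--(V2) are preserved, invoke the regularity estimates to see that any $L^2$-eigenfunction is bounded and vanishes at infinity, and apply Theorem~\ref{thm:linear1}. The paper's argument is just the one-paragraph sketch preceding the corollary; your version spells out the linear-combination step $w = u_2(0)u_1 - u_1(0)u_2$ explicitly, which is a welcome elaboration but not a different idea.
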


\begin{remarks*} {\em
1.) Note that Corollary \ref{cor:linear1} also shows the simplicity of any possible embedded eigenvalue of $H$ in the radial sector.  

2.) The simplicity of the lowest eigenvalue $\Er_1 < \Er_2$, together with the strict positivity of the corresponding eigenfunction $\psi_1(x) >0$, follows from standard Perron--Frobenius arguments for $H=(-\Delta)^s + V$ with $0 < s <1$; see also Appendix \ref{app:misc}. However, this classical method is restricted to the case of the lowest eigenvalue of $H$ and it is not applicable to any higher eigenvalue in contrast to the arguments that establish Corollary \ref{cor:linear1}. }
\end{remarks*}

As the last main result for the linear equation \eqref{eq:linear}, we prove the following sharp oscillation estimate for the second eigenfunction of $H$, which provides us with a bound in agreement with Sturm--Liouville theory for ODE.  

\begin{thm} \label{thm:linear2}
Let $H= (-\Delta)^s + V$ be as in Corollary \ref{cor:linear1} above. Suppose that $H$ has at least two radial eigenvalues $\Er_1 < \Er_2 < \mathrm{inf} \, \mathrm{\sigma}_{\mathrm{ess}}(H)$. Let $\psi \in L^2(\R^N)$ denote the radial eigenfunction of $H$ for the second radial eigenvalue $\Er_2$. Then $\psi = \psi(|x|)$ changes its sign exactly once for $|x| = r \in (0,+\infty)$.
\end{thm}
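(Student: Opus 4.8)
The plan is to combine the extension-to-$\R^{N+1}_+$ method with a homotopy in $s$, using the monotonicity formula (Theorem \ref{thm:linear1}) as the crucial rigidity input that prevents degenerate nodal behavior along the deformation. First I would set up the extension: let $\psi = \psi(r,t)$ solve the Caffarelli--Silvestre problem \eqref{eq:exten_intro} with boundary data on $\R^N$ given by the eigenfunction equation $(-\DD)^s \psi = (\Er_2 - V)\psi$, i.e. $-d_s t^{1-2s}\partial_t\psi|_{t\to 0} = (\Er_2 - V(r))\psi(r)$. The nodal set of $\psi$ on $\R^N$ is then the trace of the zero set of the extension. Since $\psi$ is the second radial eigenfunction, a min-max/orthogonality argument (it is orthogonal to the strictly positive ground state $\psi_1 > 0$) forces $\psi$ to change sign \emph{at least} once on $(0,\infty)$; the content of the theorem is the upper bound of exactly one sign change. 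So the real task is to rule out two or more sign changes.

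Next I would bound the number of nodal domains of the extended function $\psi(r,t)$ on the half-plane $\{(r,t): r>0,\ t>0\}$. The point of passing to the extension is that now we are dealing with a (degenerate but locally elliptic) \emph{local} equation $\mathrm{div}(t^{1-2s}\nabla\psi) = 0$, to which Courant-type nodal-domain bounds apply: if $\psi$ had $k$ sign changes on the boundary line, the zero set would cut the half-plane into at least $k+1$ connected nodal regions, and each such region supports a nontrivial solution of a Dirichlet eigenvalue problem with eigenvalue parameter $\Er_2$, forcing $\Er_2$ to exceed the corresponding min-max level; combined with the assumption that there are exactly two radial eigenvalues below $\inf\sigma_{\mathrm{ess}}(H)$, this should cap the count. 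However — and this is exactly the obstruction flagged in the introduction — for $N\geq 2$ the punctured half-space $\R^{N+1}_+\setminus\{(0,t)\}$ is connected, so one cannot simply use a topological separation argument as in \cite{FrLe-10} for $N=1$. This is the main difficulty, and it is where the continuation argument in $s$ enters.

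The homotopy step is therefore the heart of the proof. I would fix $s_0\in(0,1)$ and, using the implicit function theorem, continue the pair (second radial eigenvalue $\Er_2(s)$, second radial eigenfunction $\psi_s$) in $s$; invertibility of the linearization at each step is guaranteed precisely by \emph{simplicity} of $\Er_2(s)$, which is Corollary \ref{cor:linear1}. The number of sign changes of $\psi_s$ on $(0,\infty)$ is an integer-valued function of $s$, so it can only change when a zero of $\psi_s$ either escapes to $r=0$, escapes to $r=\infty$, or a zero of even order appears (a tangential touching of the axis). The monotonicity formula kills the first and third scenarios: if $\psi_s(r_0)=\psi_s'(r_0)=0$ at some $r_0\geq 0$ (in particular a zero at the origin, or a double zero), then applying Theorem \ref{thm:linear1} to the potential $V-\Er_2(s)$ forces $\psi_s\equiv 0$, a contradiction; decay of $\psi_s$ and uniform estimates (from the appendix) handle escape to infinity. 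Hence the sign-change count is \emph{locally constant} in $s$, thus constant on $[s_0,1)$. Taking $s\nearrow 1$, the limiting problem is the classical Schrödinger operator $-\DD + V$ on $\R^N$ restricted to radial functions, an ODE for which Sturm--Liouville theory gives exactly one sign change for the second radial eigenfunction. Pulling this back along the (connected) $s$-interval gives exactly one sign change for every $s\in(s_0,1)$, and since $s_0$ was arbitrary, for all $s\in(0,1)$. The main obstacle to watch carefully is the continuity/compactness needed to pass to the limit $s\to 1$ and to guarantee that the branch does not terminate prematurely — i.e. the a priori bounds ensuring $\psi_s$ stays nontrivial and its second eigenvalue stays below the essential spectrum — for which the uniform regularity and decay estimates collected in the appendix are essential.
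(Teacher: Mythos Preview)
Your overall strategy is right in spirit, but there are two genuine gaps that break the argument as written.

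First, and most seriously, you invoke Theorem~\ref{thm:linear1} to rule out interior double zeros: you claim that if $\psi_s(r_0)=\psi_s'(r_0)=0$ at some $r_0>0$, then Theorem~\ref{thm:linear1} forces $\psi_s\equiv 0$. But Theorem~\ref{thm:linear1} says only that $u(0)=0$ implies $u\equiv 0$; the monotonicity formula is anchored at the origin and gives no information about a double zero at an interior point. So you have no mechanism to prevent a tangential touching of the axis, and hence no proof that the sign-change count is locally constant in $s$. Relatedly, you assert that uniform decay estimates ``handle escape to infinity''; in fact one \emph{cannot} prevent a zero from running off to $r=\infty$ along the deformation, and in the paper's argument this is exactly what happens at the critical parameter.

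Second, your homotopy simply varies $s$ with $V$ fixed. Nothing guarantees that $(-\Delta)^s+V$ retains a second radial eigenvalue below the essential spectrum for all $s\in[s_0,1]$, so the branch $\psi_s$ may terminate before reaching $s=1$.

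The paper's proof avoids both problems by a different endgame. It first establishes (via the extension and a Jordan-curve crossing argument, Proposition~\ref{prop:osc}) that $\psi$ has at most two sign changes. Then, instead of trying to keep the sign-change count constant, it \emph{assumes} two sign changes and runs a carefully designed three-stage homotopy $H_\kappa$ (first adding a deep Gaussian well, then removing $V$, then deforming $s\to 1$) that guarantees two radial eigenvalues persist throughout. At the first parameter $\kappa_*$ where the count drops to one, Theorem~\ref{thm:linear1} rules out the inner zero collapsing to the origin, so the outer zero must escape to infinity. The asymptotic decay formula $\psi_\kappa(r)\sim -A_\kappa\bigl(\int V_\kappa\psi_\kappa\bigr)r^{-N-2s_\kappa}$ then forces $\int V_{\kappa_*}\psi_{\kappa_*}=0$, and integrating the eigenvalue equation gives $\int\psi_{\kappa_*}=0$. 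Subtracting, $\int(V_{\kappa_*}(r)-V_{\kappa_*}(r_*))\psi_{\kappa_*}=0$; but since $V$ is non-decreasing and $\psi_{\kappa_*}$ changes sign exactly once at $r_*$, the integrand has a sign, a contradiction. The role of Theorem~\ref{thm:linear1} is thus only to exclude collapse to the origin; the contradiction comes from the decay asymptotics and the monotonicity of $V$, not from any claim about interior double zeros.
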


\begin{remarks*} {\em 
1.) By this, we mean that there is some $r_* >0$ such that (after multiplying $\psi$ with $-1$ is necessary) we have 
$$\psi(r) \geq 0 \ \ \mbox{for} \ \ r \in [0,r_*) \quad \mbox{and} \quad \psi(r) \leq 0 \ \ \mbox{for} \ \ r \in [r_*,+\infty), $$ 
and $\psi \not \equiv 0$ on both intervals $[0,r_*)$ and $[r_*, +\infty)$. Note also that $\psi(0) > 0$ by Theorem \ref{thm:linear1}. 


2.) In \cite{FrLe-10} this result was shown for $N=1$ space dimension by using a variational problem posed on the upper half-space $\R^{1+1}_+$. However, carrying over the proof given there to radial solutions in $N \geq 2$ dimensions yields the weaker bound that $\psi$ changes its sign at most twice on $(0,+\infty)$. The reason that the case $N \geq 2$ is different can be traced back to the fact that the set $\R^{N+1}_+ \setminus \{ (0,t) : t > 0 \}$ is  connected when $N \geq 2$, whereas $\R^{1+1}_+ \setminus \{ (0,t) : t >0 \}$ is not connected. Despite this topological complication for $N \geq 2$, we will improve the bound for $\psi$ to the optimal bound as stated in Theorem \ref{thm:linear2}, by further independent arguments based on Theorem \ref{thm:linear1} and a homotopic argument for fractional Schr\"odinger operators $H=(-\DD)^s + V_s$ by continuing the eigenfunction with respect to $s \in (0,1]$.

3.) Note that we require $H \psi = E \psi$ with $E$ strictly below the essential spectrum of $H$. Indeed, we do not expect that $\psi$ changes its sign only once (or even finitely many times) on the half-line in the case when $E > \mathrm{inf} \, \sigma_{\mathrm{ess}}(H)$ is an embedded eigenvalue. By analogy to the classical ODE case when $s=1$, an oscillatory behavior of $\psi$ at infinity is conceivable in this special situation.

4.) In the proof of Theorem \ref{thm:nondeg} below, this sharp oscillation result for the second eigenfunction of $H=(-\DD)^s + V$ will play an essential role. In fact, the second eigenfunction is often of central interest in the linearization of minimizers in variational problems to study their stability behavior. See also the next Section \ref{sec:nonlinear} below.

}
\end{remarks*}

\section{Nonlinear Main Results}

\label{sec:nonlinear}


Let $N \geq 1$ and $s \in (0,1)$  be given. We consider real-valued solutions $Q \in H^s(\R^N)$ of the nonlinear model problem 
\be \label{eq:Q}
(-\Delta)^s Q + Q - |Q|^{\alpha} Q = 0 \ \ \mbox{in} \ \ \R^N.
\ee
We refer to \cite{FrLe-10} and references given there for physical applications of this problem. Here and throughout the following, we assume that the exponent in the nonlinearity satisfies 
\be  \label{ineq:alpha}
0 < \alpha < \alphcr(s,N) ,
\ee
where we set
\be \label{def:alphacr}
 \quad \alphcr(s,N) := \left \{ \begin{array}{ll} \frac{4s}{N-2s} & \quad \mbox{for $0 < s < \frac{N}{2}$,} \\ +\infty & \quad \mbox{for $s \geq \frac{N}{2}$.} \end{array} \right . 
\ee
The condition that $\alpha$ be strictly less than $\alphcr(s,N)$ ensures that the nonlinearity in \eqref{eq:Q} is {\em $H^s$-subcritical}. Indeed, by Pohozaev-type identites (see also below), it can be shown that equation \eqref{eq:Q} does not admit any nontrivial solutions in $(H^s \cap L^{\alpha+2})(\R^N)$ when $\alpha \geq \alphcr$ holds. Thus the condition \eqref{ineq:alpha} is necessary for the existence of nontrivial solutions of \eqref{eq:Q}, but it is also sufficient as we now recall.

A natural approach to construct nonnegative nontrivial solutions for equation \eqref{eq:Q} is given by considering the fractional Gagliardo--Nirenberg--Sobolev (GNS) inequality
\be \label{ineq:GN}
\int_{\R^N} | u |^{\alpha+2} \leq C_{\mathrm{opt}} \left ( \int_{\R^N} | (-\DD)^{s/2} u |^2 \right )^{\frac{N \alpha}{4s}} \left ( \int_{\R^N} |u|^2 \right )^{\frac{\alpha+2}{2} - \frac{N \alpha}{4s}}.
\ee
Here $C_{\mathrm{opt}} > 0$ denotes the sharp constant (depending on $s,N,\alpha$) which can be obtained by minimizing the corresponding ``Weinstein functional'' (see \cite{We-87}) given by
\be
J(u) = \frac{ \left (  \int | (-\DD)^{s/2} u |^2 \right )^{\frac{N \alpha}{4s}} \left ( \int |u|^2 \right )^{\frac{\alpha}{4s}(2s-N) + 1} }{\int |u|^{\alpha+2} }
\ee
defined for $u \in H^s(\R^N)$ with $u \not \equiv 0$. Obviously, any minimizer $Q \in H^s(\R^N)$ for $J(u)$ optimizes the interpolation estimate \eqref{ineq:GN} and vice versa. By methods of variational calculus (see below), we find that $C_{\mathrm{opt}}^{-1} = \inf_{u \not \equiv 0} J(u) >0$ is indeed attained. Moreover, any minimizer $Q \in H^s(\R^N)$ for $J(u)$ is easily found to satisfy equation \eqref{eq:Q} after a suitable rescaling $Q \mapsto \mu Q(\lambda \cdot)$ with some constants $\mu$ and $\lambda$. Since $J(|u|) \leq J(u)$ holds, we can also deduce that minimizers $Q \geq 0$ for $J(u)$ can be chosen to be nonnegative. 

We summarize the following existence result along with fundamental properties of nonnegative solutions for equation \eqref{eq:Q}.
\begin{prop} \label{prop:Q}
Let $N \geq 1$, $s \in (0,1)$, and $0 < \alpha < \alphcr(s,N)$. Then the following holds.
\begin{enumerate}
\item[(i)] {\bf Existence:} There exists a minimizer $Q \in H^s(\R^N)$ for $J(u)$, which can be chosen a nonnegative function $Q\geq0$ that solves equation \eqref{eq:Q}.

\item[(ii)] {\bf Symmetry, regularity, and decay:} If $Q \in H^s(\R^N)$ with $Q \geq 0$ and $Q \not \equiv 0$ solves \eqref{eq:Q}, then there exists some $x_0 \in \R^N$ such that $Q(\cdot -x_0)$ is radial, positive and strictly decreasing in $|x-x_0|$. Moreover, the function $Q$ belongs to $(H^{2s+1} \cap C^\infty)(\R^N)$ and it satisfies
$$
\frac{C_1}{1+|x|^{N+2s}} \leq Q(x) \leq \frac{C_2}{1+|x|^{N+2s}} \ \ \mbox{for} \ \ x \in \R^N,
$$
with some constants $C_2 \geq C_1 > 0$ depending on $s,N, \alpha$, and $Q$.
\end{enumerate}
\end{prop}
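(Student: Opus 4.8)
The plan is to treat the two parts of Proposition~\ref{prop:Q} separately: part (i) by the direct method of the calculus of variations adapted to $(-\DD)^s$, and part (ii) by combining the strong maximum principle, the method of moving planes in the Caffarelli--Silvestre extension, an elliptic bootstrap, and the asymptotics of the kernel of $((-\DD)^s+1)^{-1}$.

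\emph{Part (i): existence.} First I would use the scaling invariance $J(\mu u(\lambda\cdot))=J(u)$ to normalize a minimizing sequence $(u_n)\subset H^s(\R^N)\setminus\{0\}$ of $J$, say $\|u_n\|_{L^2}=\|(-\DD)^{s/2}u_n\|_{L^2}=1$, so that $(u_n)$ is bounded in $H^s$ and $\int|u_n|^{\alpha+2}$ stays bounded away from $0$ and $\infty$. Passing to symmetric decreasing rearrangements, the P\'olya--Szeg\H{o} inequality for the fractional Sobolev seminorm does not increase $\|(-\DD)^{s/2}u_n\|_{L^2}$ while it preserves $\|u_n\|_{L^2}$ and $\|u_n\|_{L^{\alpha+2}}$, so we may assume each $u_n\geq0$ is radial and nonincreasing. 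The radial lemma then gives $u_n(x)\lesssim|x|^{-N/2}$, which together with the compact embedding $H^s\hookrightarrow L^{\alpha+2}_{\mathrm{loc}}$ (valid since $\alpha+2$ is $H^s$-subcritical) upgrades to strong convergence $u_n\to Q$ in $L^{\alpha+2}(\R^N)$ along a subsequence; for $N=1$, where radiality yields no decay gain, I would instead run a concentration--compactness argument as in \cite{FrLe-10}. Since $\|Q\|_{L^{\alpha+2}}=\lim_n\|u_n\|_{L^{\alpha+2}}>0$, the limit is nontrivial, and weak lower semicontinuity of $\|(-\DD)^{s/2}\cdot\|_{L^2}$ and $\|\cdot\|_{L^2}$ forces $J(Q)\leq\liminf_nJ(u_n)=C_{\mathrm{opt}}^{-1}$, i.e.\ $Q$ is a minimizer. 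Computing $\tfrac{d}{d\eps}J(Q+\eps\varphi)\big|_{\eps=0}=0$ gives the Euler--Lagrange equation $c_1(-\DD)^sQ+c_2Q=c_3|Q|^\alpha Q$ with positive constants $c_1,c_2,c_3$, and the rescaling $Q\mapsto\mu Q(\lambda\cdot)$ with a suitable choice of $\mu,\lambda$ turns it into \eqref{eq:Q}; finally $J(|u|)\leq J(u)$ shows the minimizer may be taken nonnegative.

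\emph{Part (ii): positivity, symmetry, monotonicity.} Let $Q\geq0$, $Q\not\equiv0$, solve \eqref{eq:Q}. If $Q(x_0)=0$ for some $x_0$, the singular-integral representation of $(-\DD)^s$ gives $(-\DD)^sQ(x_0)=-c_{N,s}\int_{\R^N}Q(y)|x_0-y|^{-(N+2s)}\,dy<0$, contradicting $(-\DD)^sQ(x_0)=-Q(x_0)+|Q(x_0)|^\alpha Q(x_0)=0$; hence $Q>0$ on $\R^N$. For radial symmetry I would pass to the extension $U=U(x,t)$ solving $\mathrm{div}(t^{1-2s}\nabla U)=0$ in $\R^{N+1}_+$ with $-d_s t^{1-2s}\partial_tU|_{t\to0}=-Q+Q^{\alpha+1}$, and run the method of moving planes on $\R^{N+1}_+$: using the decay of $Q$ at infinity (established as below) together with narrow-domain and Hopf-type maximum principles for the degenerate operator $\mathrm{div}(t^{1-2s}\nabla\cdot)$, one shows $U$, hence $Q$, is symmetric and strictly monotone about some hyperplane in every direction, which produces a center $x_0$ with $Q(\cdot-x_0)$ radial and strictly decreasing in $|x-x_0|$.

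\emph{Part (ii): regularity and decay.} Writing $Q=\mathcal{K}*(|Q|^\alpha Q)$ with $\mathcal{K}$ the kernel of $((-\DD)^s+1)^{-1}$, a bootstrap using Sobolev embeddings and the $2s$-smoothing of $\mathcal{K}$ first yields $Q\in L^\infty$; then, since $Q>0$ makes $|Q|^\alpha Q=Q^{\alpha+1}$ a smooth function of $Q$ on every compact set, iterated fractional Schauder estimates give $Q\in C^\infty(\R^N)$, while $\nabla(Q^{\alpha+1})=(\alpha+1)Q^\alpha\nabla Q\in L^2$ (using $Q\in L^\infty\cap H^s$) gives $Q^{\alpha+1}\in H^1$ and hence $Q\in H^{2s+1}$. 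For the two-sided bound I would invoke the asymptotics $\mathcal{K}(x)\sim c_{N,s}|x|^{-(N+2s)}$ as $|x|\to\infty$: the lower bound $Q(x)\gtrsim|x|^{-(N+2s)}$ is immediate from $Q(x)=\int\mathcal{K}(x-y)Q(y)^{\alpha+1}\,dy$ and positivity, and the matching upper bound follows by splitting the convolution and a barrier comparison of $Q$ with $C(1+|x|)^{-(N+2s)}$, as in \cite{FrLe-10}. The main obstacles are the compactness in the borderline dimension $N=1$ and, above all, the moving-plane argument for the degenerate extension operator together with the sharp upper decay estimate; these are exactly the points where I would lean on the one-dimensional analysis of \cite{FrLe-10} and the regularity theory of \cite{CaSi-07,CaSi-10}.
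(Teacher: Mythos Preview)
Your overall strategy matches the paper's very closely: direct method with symmetric-decreasing rearrangement and the normalization $\|(-\DD)^{s/2}u_n\|_{L^2}=\|u_n\|_{L^2}=1$ for part (i), and positivity/moving planes/bootstrap/kernel asymptotics for part (ii). Two points of comparison are worth noting.

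First, your worry about $N=1$ in part (i) is unnecessary. The bound $|u_n(x)|\lesssim |x|^{-N/2}$ for a nonnegative, even, nonincreasing $L^2$-function holds in every dimension $N\geq 1$ (it is just $|x|^N u_n(|x|)^2 \lesssim \int_{|y|\leq |x|} u_n^2$), and this is enough, together with local Rellich compactness, to get $u_n\to Q$ strongly in $L^{\alpha+2}(\R^N)$ for the full subcritical range. The paper proceeds exactly this way for all $N\geq 1$ and does not invoke concentration--compactness.

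Second, for symmetry and strict monotonicity the paper takes a slightly different route than you. Instead of moving planes in the Caffarelli--Silvestre extension, it applies the moving-plane method for the \emph{integral} equation $Q=((-\DD)^s+1)^{-1}Q^{\alpha+1}$ as in Ma--Zhao \cite{MaZh-10}, after checking that the kernel of $((-\DD)^s+1)^{-1}$ is radial, strictly positive and strictly decreasing. This yields radiality and (non-strict) monotonicity directly, with no degenerate-elliptic maximum principles needed. For the \emph{strict} decrease the paper does not rely on a Hopf-type lemma; instead it observes that $Q'(r)\leq 0$ lies in the kernel of the angular-momentum-one operator $L_{+,1}$ and invokes the Perron--Frobenius property of $L_{+,1}$ (Lemma \ref{lem:perron}) to conclude $Q'(r)<0$ for $r>0$. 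Your extension-based moving planes with a Hopf lemma would also work, but the paper's choice avoids the regularity theory for the degenerate operator at this stage and reuses a spectral tool that is needed anyway in Section~\ref{sec:nondeg}.
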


\begin{proof}
These assertions follows from results in the literature. For instance, part (i) can be inferred by following \cite{We-87, AlBoSa-97} where the existence of minimizers for $J(u)$ for $N=1$ is shown by concentration-compactness arguments; the generalization to $N\geq 2$ is straightforward. As an alternative, we provide a simple existence proof without concentration-compactness arguments, by using rearrangement inequalities; see Appendix \ref{app:Q}.

As for the symmetry result in (ii), we can apply the moving plane method in \cite{MaZh-10} for nonlocal equations. See Appendix \ref{app:Q} again, where we also give some details regarding the assertions about decay and regularity of $Q$. Note that the properties stated in (ii) follow if $Q \in H^s(\R^N)$, $Q \not \equiv 0$, is only assumed to be a nonnegative solution of equation \eqref{eq:Q}, but in particular this applies to the minimizing solution given in (i).   \end{proof}

To formulate our main results about nonnegative solutions of equation \eqref{eq:Q}, we introduce the following notion of ground state solutions.

\begin{definition} \label{def:Qground}
Assume that $Q \in H^s(\R^N)$ is a real-valued solution of equation \eqref{eq:Q}. Let $L_+$ denote the corresponding linearized operator given by
$$
L_+ = (-\Delta)^s + 1 - (\alpha+1) |Q|^{\alpha} 
$$
acting on $L^2(\R^N)$. We say that $Q \geq 0$ with $Q \not \equiv 0$ is a {\bf ground state solution} of equation \eqref{eq:Q}, if $L_+$ has Morse index equal to one, i.\,e., $L_+$ has exactly one strictly negative eigenvalue (counting multiplicity).
\end{definition}

\begin{remarks*} {\em
1.) From \eqref{eq:Q} itself it directly follows that $(Q, L_+ Q) = - \alpha \int |Q|^{\alpha+2} < 0$. Hence, by the min-max principle, the operator $L_+$ has at least one negative eigenvalue for any nontrivial real-valued solution $Q \in H^s(\R^N)$.

2.) If $Q \geq 0$ is a (local) minimizer of the Weinstein functional $J(u)$, it is straightforward to see that $L_+$ has Morse index equal to one; see the proof of Corollary \ref{cor:GN} below. In particular, if $Q$ optimizes \eqref{ineq:GN} then $Q$ is a ground state in the sense of Definition \ref{def:Qground}. 

3.) Note that the notion of ground states defined above is weaker than the one used in \cite{FrLe-10}, where $Q$ was assumed to be a global minimizers of $J(u)$. 
}
\end{remarks*} 

The following result about ground state solutions for \eqref{eq:Q} establishes the key fact that the corresponding linearized operator is nondegenerate.  

\begin{thm}{\bf (Nondegeneracy).} \label{thm:nondeg}
Let $N \geq 1$, $s \in (0,1)$, and $0 < \alpha < \alphcr(s,N)$. Suppose that $Q \in H^s(\R^N)$ is a ground state solution of \eqref{eq:Q}. Then the linearized operator $L_+$ is nondegenerate, i.\,e., its kernel is given by
$$
\mathrm{ker} \, L_+ = \mathrm{span} \, \big \{ \partial_{x_1} Q, \ldots, \partial_{x_n} Q \big \} .
$$ 
\end{thm}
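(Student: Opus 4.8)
The plan is to reduce the nondegeneracy of $L_+$ to the linear results of Section~\ref{sec:linear}. The key point is that $Q=Q(r)$ is radial, positive and strictly decreasing, so the potential appearing in $L_+$, namely $V(r) = 1 - (\alpha+1)Q(r)^\alpha$, is radial, \emph{non-decreasing}, and (by the regularity and decay in Proposition~\ref{prop:Q}) lies in the H\"older class required by (V1)--(V2). Thus Corollary~\ref{cor:linear1} and Theorem~\ref{thm:linear2} apply verbatim to $H := (-\Delta)^s + V = L_+$ on $L^2_{\mathrm{rad}}(\R^N)$.

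First I would decompose $L^2(\R^N)$ into spherical-harmonics sectors, writing $L_+$ as a direct sum $\bigoplus_{\ell\ge 0} L_+^{(\ell)}$ acting on the $\ell$-th sector; here $L_+^{(\ell)}$ differs from the radial operator only by the additional (non-negative) centrifugal-type contribution coming from the angular part of $(-\Delta)^s$. Any element of $\ker L_+$ splits accordingly, so it suffices to control $\ker L_+^{(\ell)}$ in each sector. For $\ell = 1$ the functions $\partial_{x_j} Q$ ($j=1,\dots,N$) are an $N$-dimensional family of explicit elements of $\ker L_+$ lying in the $\ell=1$ sector (differentiate equation~\eqref{eq:Q}); I need to show this is all of $\ker L_+$, i.e. (a) $\ker L_+^{(1)}$ is exactly one-dimensional as a radial problem, and (b) $\ker L_+^{(\ell)} = \{0\}$ for all $\ell \neq 1$.

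For the radial sector $\ell = 0$: since $Q>0$ is a ground state, $L_+ = L_+^{(0)}$ on radial functions has exactly one negative eigenvalue $\Er_1$, with strictly positive eigenfunction (Perron--Frobenius, Appendix~\ref{app:misc}); if $0$ were a radial eigenvalue it would have to be $\Er_2$, whose eigenfunction changes sign exactly once by Theorem~\ref{thm:linear2}. But one computes $L_+ (x\cdot\nabla Q + \tfrac{2s}{\alpha}Q)$ or, more directly, one tests: a radial zero-mode, being $\Er_2$, changes sign once, while on the other hand one shows by a Pohozaev/scaling identity (using $\partial_\lambda$ of the rescaled family, as in \cite{FrLe-10,Kw-89}) that a radial kernel element would have to be proportional to something of one sign or be orthogonal to $Q>0$ in a way that forces a contradiction with the single-sign-change structure — this rules out $0 \in \sigma(L_+^{(0)})$. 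For the higher sectors $\ell \ge 2$: the angular eigenvalue of $-\Delta_{S^{N-1}}$ is $\ell(\ell+N-2) > N-1$ strictly larger than the $\ell=1$ value $N-1$; since $(-\Delta)^s$ is an increasing function of $-\Delta$ and the angular operator is positive, one gets $L_+^{(\ell)} \ge L_+^{(1)} + (\text{strictly positive})$ in the quadratic-form sense for a suitable comparison, and since $L_+^{(1)} \ge 0$ (as $\partial_{x_1}Q>0$ for $x_1>0$ is a positive ground state of the $\ell=1$ problem, making $0$ its lowest eigenvalue), one concludes $L_+^{(\ell)} > 0$ for $\ell\ge 2$, hence no kernel. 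For $\ell = 1$: $0$ is the bottom of $\sigma(L_+^{(1)})$ with positive eigenfunction $\partial_{x_1}Q$, so it is simple in that sector by Perron--Frobenius, giving exactly the $N$-dimensional span claimed.

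The main obstacle I anticipate is the comparison argument for the sectors $\ell \ge 2$, i.e. making rigorous that $(-\Delta)^s$ restricted to sector $\ell$ dominates the one restricted to sector $1$ plus a strictly positive gap — because $(-\Delta)^s$ is nonlocal, its decomposition into spherical harmonics is not simply ``radial $(-\Delta)^s$ plus $\ell(\ell+N-2)/r^{2s}$'', and one must instead use the extension problem~\eqref{eq:exten_intro} on $\R^{N+1}_+$, separate variables there, and compare the resulting one-dimensional weighted operators in $(r,t)$; monotonicity of the Dirichlet-to-Neumann map in the angular eigenvalue is the crucial technical input. A secondary delicate point is confirming $L_+^{(1)}\ge 0$: this requires knowing that $\partial_{x_1}Q$, which vanishes at the origin and is positive for $x_1>0$, is genuinely the ground state of the $\ell=1$ operator and not merely \emph{a} zero mode — here Theorem~\ref{thm:linear1} (radial uniqueness: $u(0)=0 \Rightarrow u\equiv 0$) together with the sign-change count of Theorem~\ref{thm:linear2} is exactly what closes the gap, since it prevents extra low-lying modes from appearing.
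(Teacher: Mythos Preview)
Your outline matches the paper's proof: spherical-harmonics decomposition, with the radial sector handled by Theorem~\ref{thm:linear2} plus the orthogonality $(v,\,Q^{\alpha+1}-\mu Q)=0$ for all $\mu$ (coming from $L_+Q=-\alpha Q^{\alpha+1}$ and $L_+R=-2sQ$ with $R=\tfrac{2s}{\alpha}Q+x\cdot\nabla Q$; choosing $\mu=Q(r_*)^\alpha$ at the unique sign change $r_*$ of $v$ gives the contradiction), the sector $\ell=1$ by Perron--Frobenius applied to the strictly signed $Q'<0$, and $\ell\geq 2$ by the strict form inequality $L_{+,\ell}>L_{+,1}$. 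Your two anticipated obstacles are easier than you fear: the $\ell\geq 2$ comparison needs no extension to $\R^{N+1}_+$ but only the scalar identity $x^s=\tfrac{\sin(\pi s)}{\pi}\int_0^\infty \tfrac{x}{x+\lambda}\,\lambda^{s-1}\,d\lambda$ together with resolvent monotonicity (Lemma~\ref{lem:ordering}), and $L_{+,1}\geq 0$ is immediate from Perron--Frobenius (Lemma~\ref{lem:perron}) once you know $Q'(r)<0$ is strictly signed, without any appeal to Theorems~\ref{thm:linear1}--\ref{thm:linear2}.
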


\begin{remarks*} {\em
1.) Suppose $Q = Q(|x|)$ is a radial ground state (which by Proposition \ref{prop:Q} follows after a translation). Then Theorem \ref{thm:nondeg} implies that $(\mathrm{ker} \, L_+ ) \cap L_{\mathrm{rad}}^2(\R^N) = \{ 0 \}$ and we easily check that $L_+$ is invertible on $L^2_{\mathrm{rad}}(\R^N)$. 

2.) The nondegeneracy of $L_+$ implies the coercivity estimate
$$
(u, L_+ u) \geq c \| u \|_{H^s}^2 \ \ \mbox{for} \ \ u \perp M,
$$   
with some positive constant $c > 0$, where $M$ is a suitably chosen $(n+1)$-dimensional subspace (e.\,g., one can take $M = \mathrm{span} \, \{ \phi, \partial_{x_1} Q, \ldots \partial_{x_n} Q \}$ with $\phi$ being the linear ground state of $L_+$.) Such results form a key aspect in the stability and blowup analysis for related time-dependent problems (e.\,g.,~generalized Benjamin--Ono equations, fractional Schr\"odinger equations etc.); see, e.\,g., \cite{KeMaRo-11, KrLeRa-12} for applications. 
}
\end{remarks*}

Finally, we have the following uniqueness result for ground state solutions of \eqref{eq:Q}, which generalizes the result in \cite{FrLe-10} to arbitrary space dimensions.

\begin{thm} {\bf (Uniqueness).} \label{thm:unique}
Let $N \geq 1$, $s \in (0,1)$, and $0 < \alpha < \alpha_*(s,N)$. Then the ground state solution $Q \in H^s(\R^N)$ for equation \eqref{eq:Q} is unique up to translation.  
\end{thm}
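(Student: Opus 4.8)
The plan is to deduce uniqueness of ground states from the nondegeneracy result (Theorem~\ref{thm:nondeg}) via a continuation-in-$s$ argument, bootstrapped by the known uniqueness at $s=1$ (Kwong~\cite{Kw-89}). Concretely, I would fix $N\geq 1$ and $0<\alpha<\alpha_*(s,N)$ and consider the set of indices $s$ for which the ground state of \eqref{eq:Q} is unique up to translation. The goal is to show this set is nonempty, open, and closed in a suitable parameter interval ending at $s=1$, so that uniqueness propagates down from the classical case.

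First I would set up the functional-analytic framework: restrict attention to radial, positive solutions (legitimate by Proposition~\ref{prop:Q}), and work in $H^s_{\mathrm{rad}}(\R^N)$. For a given ground state $Q_{s_0}$ at parameter $s_0$, the linearized operator $L_+$ is, by Theorem~\ref{thm:nondeg} and Remark~1 following it, invertible on $L^2_{\mathrm{rad}}(\R^N)$. This is exactly the hypothesis needed to run the implicit function theorem: the map $(s,Q)\mapsto (-\Delta)^s Q + Q - |Q|^\alpha Q$ is $C^1$ in an appropriate scale of spaces (here one must be careful that $(-\Delta)^s$ depends smoothly on $s$ on radial functions decaying like $|x|^{-N-2s}$; the decay estimate in Proposition~\ref{prop:Q}(ii) and the regularity $Q\in H^{2s+1}\cap C^\infty$ are what make the Fréchet derivative in $s$ well-defined), and its partial derivative in $Q$ at $Q_{s_0}$ is $L_+$, which is invertible on the radial sector. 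Hence there is a unique local $C^1$ branch $s\mapsto Q_s$ of radial positive solutions through $Q_{s_0}$. Openness of the uniqueness set follows once one argues that \emph{every} radial positive ground state near $s_0$ must lie on such a branch — which again uses nondegeneracy (two distinct branches colliding would force a degenerate $L_+$).

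The core of the argument is then the a priori control needed for the open--closed dichotomy: I must show that along the branch $Q_s$ the solutions stay in a fixed compact set of $H^s_{\mathrm{rad}}$ (suitably interpreted, e.g.\ via uniform decay and regularity bounds, presumably the content of the appendix referenced in the excerpt), so that the branch extends to all of $[s_0,1]$ and, as $s\nearrow 1$, converges to \emph{the} unique ground state of $-\Delta Q + Q - |Q|^\alpha Q = 0$ (Kwong). Combined with the fact that $\alpha_*(s,N)$ varies continuously and the admissibility condition is preserved, a standard connectedness argument on $\{s : \text{ground state unique}\}$ — nonempty because it contains a neighborhood of $s=1$, open by the IFT paragraph, closed by the compactness/a priori bounds — yields uniqueness for all $s\in(0,1)$. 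One should also separately dispatch the case $\alpha$ close to $\alpha_*$ where $s<N/2$, ensuring the $L^{\alpha+2}$ norms stay bounded uniformly, perhaps by working with the rescaled Weinstein minimizer and exploiting that the sharp constant depends continuously on $(s,\alpha)$.

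The main obstacle I anticipate is the a priori compactness/continuity of the branch, especially near $s=1$ and uniformly down to small $s$: one needs uniform-in-$s$ decay estimates $Q_s(x)\lesssim (1+|x|)^{-N-2s}$ with constants controlled as $s\to 1^-$ (where the decay exponent blows up), and one needs to rule out that the branch $Q_s$ escapes to a non-ground-state solution or loses mass. Establishing that the map $s\mapsto (-\Delta)^s$ is genuinely $C^1$ in the relevant topology — rather than merely continuous — is the technically delicate point, since differentiating $(-\Delta)^s = e^{s\log(-\Delta)}$ in $s$ produces a logarithmic factor that must be shown to act boundedly on the weighted spaces where $Q_s$ lives; this is presumably handled by the extension-to-$\R^{N+1}_+$ representation and the precise polynomial decay of $Q_s$. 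Everything downstream — openness, closedness, and matching to Kwong's theorem — is then comparatively routine.
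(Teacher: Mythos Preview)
Your proposal is essentially correct and follows the same strategy as the paper: nondegeneracy (Theorem~\ref{thm:nondeg}) feeds an implicit function theorem to build a local branch $s\mapsto Q_s$, a-priori bounds extend the branch globally to $s=1$, and Kwong's uniqueness at $s=1$ closes the argument. The paper's framing is slightly more direct than your open--closed dichotomy: rather than studying the set of $s$ where uniqueness holds, the paper supposes two distinct radial ground states $Q_{s_0}\not\equiv\tilde Q_{s_0}$ at a fixed $s_0$, builds \emph{two} branches $Q_s$ and $\tilde Q_s$, shows both extend to $s=1$ and converge (via compactness) to the \emph{same} Kwong ground state $Q_*$, and then derives a contradiction from the local uniqueness of the branch through $Q_*$ (using nondegeneracy of $L_*$ at $s=1$). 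This avoids having to argue separately about openness and closedness of a uniqueness set.

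Two technical points where the paper's execution differs slightly from what you sketched: (i) the paper works in the $s$-independent space $X^\alpha = L^2_{\mathrm{rad}}\cap L^{\alpha+2}_{\mathrm{rad}}$ rather than $H^s_{\mathrm{rad}}$, which sidesteps the issue of the ambient space changing along the branch; and (ii) for the uniform decay needed in the compactness step, the paper uses only the crude bound $Q_s(|x|)\lesssim |x|^{-N}$ (from monotonicity and the uniform $L^1$ bound), not the sharper $|x|^{-N-2s}$ you anticipated---this weaker bound is enough for strong $L^2$ convergence and is manifestly uniform as $s\to 1$. Your concern about the logarithmic factor $(-\Delta)^s\log(-\Delta)$ is on point: the paper controls it in Step~2 of Lemma~\ref{lem:apriori} by a uniform fractional regularity estimate $\|(-\Delta)^t Q_s\|_{L^2}\lesssim 1$ for some $t>s/2$, obtained via the weak Young inequality. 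Your worry about $\alpha$ close to $\alpha_*$ is unnecessary here, since $\alpha<\alpha_*(s_0,N)\leq\alpha_*(s,N)$ for all $s\geq s_0$ by monotonicity of $\alpha_*$ in $s$.
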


As a consequence of this uniqueness result, we have the following classification of the optimizers for inequality \eqref{ineq:GN}.

\begin{cor} \label{cor:GN}
Every optimizer $v \in H^s(\R^N)$ for the Gagliardo--Nirenberg--Sobolev inequality \eqref{ineq:GN} is of the form $v = \beta Q(\gamma ( \cdot + y))$ with some $\beta \in \C$, $\beta \neq 0$, $\gamma > 0$, and $y \in \R^N$.
\end{cor}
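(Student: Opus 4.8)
The plan is to deduce Corollary~\ref{cor:GN} from the uniqueness Theorem~\ref{thm:unique} together with the structural facts recorded in Proposition~\ref{prop:Q} and the second remark following Definition~\ref{def:Qground}. The overall strategy is standard: an optimizer of \eqref{ineq:GN} is, after scaling, a ground state in the sense of Definition~\ref{def:Qground}, hence equals the unique $Q$ up to the symmetries of the problem. The symmetries that leave $J$ invariant are precisely multiplication by a nonzero complex constant, dilation $u \mapsto u(\gamma\,\cdot)$ with $\gamma>0$, and translation $u \mapsto u(\cdot+y)$, which accounts for the three parameters $\beta,\gamma,y$ in the statement.

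First I would reduce to a real nonnegative optimizer. Given any optimizer $v \in H^s(\R^N)$ with $v \not\equiv 0$, one checks $J(|v|) \leq J(v)$ (this uses that $(-\Delta)^{s/2}$ does not increase the Dirichlet energy under taking absolute values, a fact already invoked in the discussion preceding Proposition~\ref{prop:Q}), so $|v|$ is also an optimizer; moreover equality in this step forces $v = e^{i\theta}|v|$ for some constant phase $\theta$, up to the usual caveat which I would address via the strict positivity of $|v|$ from Proposition~\ref{prop:Q}(ii). Thus it suffices to treat $w := |v| \geq 0$. Since $w$ optimizes \eqref{ineq:GN}, the Euler--Lagrange equation for $J$ shows that a suitable rescaling $\widetilde Q(x) := \mu\, w(\lambda x)$, with $\mu,\lambda>0$ chosen exactly as in the paragraph after \eqref{ineq:GN}, solves \eqref{eq:Q}; here $\widetilde Q \geq 0$, $\widetilde Q \not\equiv 0$.

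Next I would verify that $\widetilde Q$ is a ground state in the sense of Definition~\ref{def:Qground}, i.e.\ that its linearized operator $L_+$ has Morse index exactly one. This is precisely the content of Remark~2 after Definition~\ref{def:Qground}: a (local) minimizer of $J$ has $L_+$ of Morse index one. The argument is that the constraint manifold on which $J$ is being minimized has codimension one (there is essentially one scaling constraint), so the Hessian of the Lagrangian can be negative in at most one direction beyond the kernel directions, while Remark~1 after the definition gives $(\widetilde Q, L_+ \widetilde Q) < 0$, forcing Morse index at least one. I would spell this out: differentiating the rescaled Weinstein functional twice at the critical point $\widetilde Q$ and matching with $L_+$, one finds that $L_+$ restricted to the orthogonal complement of $\widetilde Q$ (or of $\widetilde Q$ together with the translation directions) is nonnegative, which pins the Morse index to one.

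With $\widetilde Q$ recognized as a ground state solution of \eqref{eq:Q}, Theorem~\ref{thm:unique} applies: $\widetilde Q(x) = Q(x - x_0)$ for some $x_0 \in \R^N$, where $Q$ is \emph{the} ground state. Unwinding the rescaling, $w(x) = \mu^{-1} Q(\lambda x - x_0) = \mu^{-1} Q\big(\lambda(x - x_0/\lambda)\big)$, and then $v = e^{i\theta} w$ gives $v(x) = \beta\, Q(\gamma(x+y))$ with $\beta = e^{i\theta}\mu^{-1} \neq 0$, $\gamma = \lambda > 0$, $y = -x_0/\lambda$, which is the claimed form. The main obstacle I anticipate is the bookkeeping in the Morse-index step, specifically getting the constrained second-variation computation clean enough to conclude "index exactly one" rather than merely "at least one"; the reduction to a nonnegative function and the final unwinding of symmetries are routine, and the heavy lifting (the actual uniqueness) is entirely absorbed into Theorem~\ref{thm:unique}. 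One should also double-check that no additional optimizers arise from the absolute-value step being non-strict, but Proposition~\ref{prop:Q}(ii)'s strict positivity of $|v|$ rules that out.
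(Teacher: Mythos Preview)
Your argument is correct and follows the same overall strategy as the paper: reduce an arbitrary optimizer to a nonnegative solution of \eqref{eq:Q}, verify via the second-variation remark that its linearization has Morse index one, and then invoke Theorem~\ref{thm:unique}. The one genuine difference is in the reduction step. The paper appeals to the \emph{strict rearrangement inequalities} for $(-\Delta)^s$ from \cite{BuHa-06,FrSe-08}: equality in $\|(-\Delta)^{s/2} v^*\|_{L^2} \leq \|(-\Delta)^{s/2} v\|_{L^2}$ forces $v$ to be a complex multiple of a translate of its symmetric-decreasing rearrangement, so the phase $\beta$, the translation $y$, and radial monotonicity all drop out in one stroke. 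You instead use only the equality case in the diamagnetic-type bound $\|(-\Delta)^{s/2}|v|\|_{L^2} \leq \|(-\Delta)^{s/2} v\|_{L^2}$, combined with the strict positivity from Proposition~\ref{prop:Q}(ii), to extract the constant phase; radiality and the translation are then recovered later, from Proposition~\ref{prop:Q}(ii) and Theorem~\ref{thm:unique} respectively. Your route avoids the rearrangement literature entirely but leans a little harder on the moving-plane input packaged into Proposition~\ref{prop:Q}(ii); the paper's route is marginally more direct. Both are sound, and the Morse-index bookkeeping you flag as the main obstacle is handled identically in the paper (by citing the $N=1$ computation in \cite{FrLe-10}).
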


\begin{proof}[Proof of Corollary \ref{cor:GN}]
With Theorem \ref{thm:unique} at hand, we can follow the arguments for corresponding result in $N=1$ dimension given in \cite{FrLe-10}. That is, by strict rearrangement inequalities for $(-\DD)^s$ with $s\in (0,1)$ (see \cite{BuHa-06,FrSe-08}) we deduce that any optimizer $v \in H^s(\R^N)$ for \eqref{ineq:GN} is of the form $v = \beta v^*(\cdot + y)$ for some $\beta \in \C$, $\beta \neq 0$ and $y \in \R^N$, where $v^*=v^*(|x|) \geq 0$ denotes the symmetric-decreasing rearrangement of $v$. Since $v^* \in H^s$ is also a minimizer for $J(u)$, we see (after rescaling $v^* \mapsto \lambda v^*(\mu \cdot)$ if necessary) that $v^* = Q \geq 0$ satisfies equation \eqref{eq:Q}. Furthermore, an explicit calculation using the positivity of the second variation $\frac{d^2}{d \eps^2} J(Q+\eps \eta) \big |_{\eps = 0} \geq 0$ for any $\eta \in C^\infty_0(\R^N)$ shows that $L_+$ has Morse index equal to one. (This argument to determine the Morse index of $L_+$ for a minimizer $Q$ was introduced by M.~Weinstein \cite{We-85} in the context of NLS. See \cite{FrLe-10} for details of its adaption for fractional Laplacians in $N=1$ dimension; the generalization to $N \geq 2$ is immediate.) Hence $v^*=Q \geq 0$ is a ground state for equation \eqref{eq:Q} and  we can apply Theorem \ref{thm:unique} to conclude the proof. \end{proof}

\section{Proof of Theorem 1}

\label{sec:mono}

\subsection{Preliminaries}

We start by briefly recalling the extension principle in \cite{CaSi-07} that expresses the nonlocal operator $(-\DD)^s$ on $\R^N$ with $s \in (0,1)$ as a Dirichlet--Neumann map for a suitable local elliptic problem posed on the upper halfspace $\R^{N+1}_+$. See also also \cite{GrZw-03, ChGo-11, MoOs-69}, where this observation appears in the contexts of conformal geometry and stochastic processes, respectively. 

Let $s \in (0,1)$ be given. For a measurable function $f : \R^N \to \R$, we define its {\em $s$-Poisson extension} to the upper halfspace $\R^{N+1}_+$ by setting
\be
(\cE_{s} f)(x,t) = \int_{\R^N} P_s(x-y,t) f(y) \, dy.
\ee 
Here the generalized Poisson kernel $P_s(z,t)$ of order $s$ is given by
\be
P_s(z,t) = \frac{1}{t^N} k_s \left ( \frac x t \right ), \quad  \mbox{where} \ \ k_s(z) = \frac{c_{N,s}}{(1 + |z|^2)^{\frac{N+2s}{2}}} ,
\ee
where the constant $c_{N,s} > 0$ is chosen such that $\int_{\R^N} k_s \, dz= 1$ holds.  Under suitable assumptions on $f$ (see, e.\,g.,\cite{CaSi-07,CaSi-10}), it is known that $w(x,t)= (\cE_sf)(x,t)$ solves  the degenerate elliptic boundary-value problem
\be \label{eq:extension-s}
\left \{ \begin{array}{ll} \mathrm{div} \, (t^{1-2s} \nabla w) = 0 & \quad \mbox{in $\R^{N+1}_+$} , \\
w = f &  \quad \mbox{on $\partial \R^{N+1}_+$.} \quad \end{array} \right .
\ee
Here the boundary condition is understood in some suitable sense of traces; see also Sect.~\ref{sec:osc} below. If $f$ is sufficiently regular, then we have (in some approriate space) the convergence
\be \label{eq:DirichNeu}
- d_s \lim_{t \to 0^+} t^{1-2s} \partial_t w(\cdot, t) = (-\DD)^s f,
\ee
where $d_s > 0$ is the constant in \eqref{eq:ds}. Note that \eqref{eq:DirichNeu} expresses the fact that $(-\DD)^s$ can be regarded as the  Dirichlet--Neumann map for problem \eqref{eq:extension-s} with the weight $t^{1-2s}$. 

In the special case $s=1/2$, the above observations reduce to the classical fact that, if $f : \R^N \to \R$ is continuous and bounded, then the Poisson extension $w = \cE_{1/2}f$ is the unique bounded harmonic function in $\R^{N+1}_+$ continuous up to the boundary such that $w(x,0) = f(x)$. In fact, this result carries over to the whole range $s \in (0,1)$, by results for the degenerate elliptic operator $L_s = \mathrm{div} ( t^{1-2s} \nabla \cdot)$ derived by Fabes et al.~in \cite{FaKeSe-82}; see also \cite{CaSi-07,CaSi-10}.

\subsection{Monotonicity Formula}
Let $u \in L^\infty(\R^N)$ satisfy the assumptions of Theorem \ref{thm:linear1}. By Proposition \ref{prop:regu}, we have the regularity estimate $u \in C^{1,\beta}(\R^N)$ for some $\beta \in (0,1)$.

Next, we introduce the following convenient slight abuse of notation: Let $u=u(x,t)$ with $(x,t) \in \R^{N+1}_+$ denote the $s$-Poisson extension of $u=u(|x|)$ to the upper halfspace $\R^{N+1}_+$. Since $u(|x|)$ is radial on $\R^{N}$, we clearly have that its corresponding extension $u=u(|x|, t)$ is cylindrically symmetric on $\RNup$ with respect to the $t$-axis. Using this fact, we can write the boundary problem \eqref{eq:extension-s} satisfied by the extension $u$ as follows:
\begin{equation} \label{eq:ext}
\left \{ \begin{array}{ll} \displaystyle u_{rr} + \frac{N-1}{r} u_r +  u_{tt} + \frac{a}{t} u_t = 0, & \quad \mbox{in $\R^{N+1}_+$}, \\[1ex]
\displaystyle - d_s  t^{a} u_t + V u = 0, & \quad \mbox{on $\partial \R^{N+1}_+$},
\end{array} \right . 
\end{equation}
with the constant $d_s>0$ taken from \eqref{eq:ds}. Here and in the following, we use convention that $a=1-2s$ for $s \in (0,1)$ given.

Inspired by the work of Cabr\'e and Sire \cite{CaSi-10} (see also \cite{CaSo-05} for earlier work in the case $s=1/2$) on layer and radial solutions of nonlinear equations of the form $(-\DD)^s v = f(v)$ on $\R^N$, we introduce the function 
\begin{equation} \label{def:Hfirst}
H(r) = d_s   \int_0^{+\infty} \frac{t^a}{2} \left \{ u_r^2(r,t) - u_t^2(r,t) \right \} dt - \frac{1}{2} V(r) u(r)^2 ,
\end{equation}
From the estimates in Proposition \ref{prop:regu2} for the extension $u=u(r,t)$, we deduce that $H(r)$ is a well-defined and continuous function. Moreover, we see that
\be \label{eq:Hinf}
\lim_{r \to +\infty} H(r) = 0,
\ee
\begin{equation} \label{eq:H0}
H(0) = - d_s \int_0^{+\infty } \frac{t^a}{2} u_t^2(0,t) \, dt - \frac{1}{2} V(0) u(0)^2 \leq - \frac 1 2 V(0) u(0)^2 .
\end{equation} 
Note that \eqref{eq:Hinf}  follows from $\lim_{r \to +\infty} \int_0^{+\infty} t^a \{ u_r^2 - u_t^2 \} (r,t) \, dt = 0$ by Proposition \ref{prop:regu2} and the fact that $\lim_{r \to +\infty} V(r) u(r)^2 = 0$, since $V \in L^\infty$ and $u(|x|) \to 0$ as $|x| \to \infty$; see the remark following Theorem \ref{thm:linear1}. To conclude \eqref{eq:H0}, we just use the fact that $u_r(0,t) \equiv 0$ holds by cylindrical symmetry. 

Let us first sketch the argument to prove Theorem \ref{thm:linear1} by a formal calculation. Recall that $u(r)$ is a $C^1$ function. Furthermore, for the moment let us also assume that  $V$ is differentiable too (and not just weakly differentiable with $V' \in L^1_{\mathrm{loc}}$). Assuming that we are allowed to differentiate under the integral sign in \eqref{def:Hfirst}, we (formally at least) obtain by using equation \eqref{eq:ext} and integrating by parts (for details see below) that
\be \label{ineq:H}
\frac{dH}{dr} = -d_s \frac{N-1}{r} \int_0^{+\infty} t^a u_r^2(r,t) \, dt - \frac 1 2 V'(r)u(r)^2 \leq 0,
\ee
since $V' \geq 0$ by assumption. Hence $H(r)$ is monotone decreasing and we conclude that
\be
-\frac 1 2 V(0) u(0)^2 \geq H(0) \geq H(r) \geq \lim_{r\to +\infty} H(r) = 0. 
\ee
Suppose now that $u(0)=0$. Then equality holds in the above inequalities and therefore $H(r) \equiv 0$ and consequently $dH/dr \equiv 0$. Now let us assume that $N \geq 2$ holds. Then, we conclude from \eqref{ineq:H} and $V' \geq 0$ that $u_r(r,t) \equiv 0$ holds. Hence $u \equiv 0$ follows for $N \geq 2$. (The proof for $N=1$ is actually a bit more involved $N =1$; see below). This completes the proof of Theorem \ref{thm:linear1} for $N \geq 2$, provided that we can differentiate under the integral sign in the expression for $H(r)$. However, this is not guaranteed in general for $u \in C^{1,\beta}(\overline{\RNup})$, as one can check by inspection. To handle this technicality, we could impose more regularity on $V$ to guarantee that $u \in C^{2,\beta}(\overline{\RNup})$ holds (which would be sufficient to justify interchanging differentiation and integration). However, we will keep the weaker regularity conditions on $V$, by using a regularized version of the previous arguments as follows.

Let $\eta \in C^\infty_0( \R_+ )$ with $0 \leq \eta \leq 1$ be a nonnegative bump function with $\int_0^{+\infty} \eta(u) \, du = 1$. We define an averaged version of $H(r)$ given by
\be
\Hav(r) =  \int_0^{+\infty} H(\bar{r}) \eta \left ( \frac{\bar{r}}{r} \right ) \frac{d \bar{r} }{r}
\ee
Clearly, the function $\Hav(r)$ is differentiable and taking the derivative with respect to $r$ interchanges with integration. Furthermore, by using change of variables, dominated convergence and the normalization condition $\int_0^{+\infty} \eta(u) \, du = 1$, we readily check that
\be  \label{eq:Havlim}
\lim_{r \to 0^+} \Hav(r) = H(0)  \ \ \mbox{and} \ \  \lim_{r \to +\infty} \Hav(r) =  0,
\ee
recalling that \eqref{eq:Hinf} holds. Next, we claim the following fact.

\begin{lemma} \label{lem:Hav}
It holds that
$$
\Hav'(r) = -\int_{\bar{r}=0}^{+\infty} \left \{ d_s \frac{N-1}{\bar{r}} \int_{t=0}^{+\infty} t^a u_r^2(\bar{r},t) \, dt + \frac{1}{2} V'(\bar{r}) u(\bar{r})^2 \right \} \eta \left ( \frac{\bar{r}}{r} \right ) \frac{d \bar{r}}{r} .
$$
In particular, we have $\Hav'(r) \leq 0$ and hence $\Hav(r)$ is monotone decreasing.
\end{lemma}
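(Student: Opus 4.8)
The plan is to prove Lemma~\ref{lem:Hav} by computing $\Hav'(r)$ directly from the definition, reducing it to a statement about $H$ itself, and then establishing the required formula for the (distributional) derivative of $H$ via the regularized family $\cE_s u$ restricted to suitable slabs. First I would differentiate under the integral sign in the definition $\Hav(r)=\int_0^{+\infty} H(\bar r)\,\eta(\bar r/r)\,d\bar r/r$, which is legitimate since $\eta \in C^\infty_0(\R_+)$ and $H$ is continuous; after the change of variables $\bar r = r\sigma$ one gets $\Hav(r) = \int_0^{+\infty} H(r\sigma)\eta(\sigma)\,d\sigma$, so that $\Hav'(r) = \int_0^{+\infty} H'(r\sigma)\,\sigma\,\eta(\sigma)\,d\sigma$ in the sense of distributions, and then undoing the substitution gives $\Hav'(r) = \int_0^{+\infty} H'(\bar r)\,\eta(\bar r/r)\,d\bar r/r$ where $H'$ is interpreted as the distributional derivative of $H$. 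Thus the whole lemma reduces to establishing that, as a distribution (equivalently, as an $L^1_{\mathrm{loc}}$ function, since the right-hand side is locally integrable),
\[
H'(r) = -d_s\,\frac{N-1}{r}\int_0^{+\infty} t^a u_r^2(r,t)\,dt - \frac12 V'(r) u(r)^2 .
\]

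The main work is to justify this identity for $H'$ without assuming $u \in C^{2,\beta}$. The approach is to test against an arbitrary $\varphi \in C^\infty_0((0,+\infty))$ and show $-\int H(r)\varphi'(r)\,dr$ equals the integral of $\varphi$ against the claimed right-hand side. For the boundary term $-\tfrac12 V(r)u(r)^2$ this is just the distributional product rule, using that $V \in C^{0,\gamma}$ has $V' \in L^1_{\mathrm{loc}}$ (in fact $V'\ge 0$ as a measure since $V$ is non-decreasing) and $u \in C^{1,\beta}$. For the bulk term $d_s\int_0^\infty \tfrac{t^a}{2}\{u_r^2 - u_t^2\}\,dt$ I would integrate by parts in $r$ after multiplying by $\varphi'$, move the $r$-derivative onto $u_r u_{rr} - u_t u_{tr}$, and then use the interior equation $u_{rr} + \tfrac{N-1}{r}u_r + u_{tt} + \tfrac{a}{t}u_t = 0$ together with integration by parts in $t$ (the boundary term at $t=0$ producing, via the Neumann condition $-d_s t^a u_t\to Vu$, precisely the cancellation with the boundary term's derivative). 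The decay estimates in Proposition~\ref{prop:regu2} are what make all these $t$-integrals and the $t\to 0$, $t\to\infty$ boundary terms converge and vanish appropriately; the regularity $u\in C^{1,\beta}(\overline{\RNup})$ (Proposition~\ref{prop:regu}) together with interior elliptic regularity for the degenerate operator $L_s$ (so that $u$ is smooth for $t>0$) lets one perform the $t$-integration by parts rigorously on $\{t\ge\delta\}$ and pass to the limit $\delta\to 0$.

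Concretely, I would carry out the computation on the truncated quantity $H_\delta(r) = d_s\int_\delta^{+\infty}\tfrac{t^a}{2}\{u_r^2-u_t^2\}\,dt$, for which differentiation in $r$ is clearly allowed since the integrand is smooth on $\{t\ge\delta\}$ and compactly controlled; one finds $H_\delta'(r) = d_s\int_\delta^\infty t^a(u_r u_{rr} - u_t u_{tr})\,dt$, substitute $u_{rr} = -\tfrac{N-1}{r}u_r - u_{tt} - \tfrac{a}{t}u_t$ into the first factor, and integrate $-d_s\int_\delta^\infty t^a u_r(u_{tt}+\tfrac{a}{t}u_t)\,dt = -d_s\int_\delta^\infty \partial_t(t^a u_t)\,u_r\,dt$ by parts in $t$ to obtain $d_s\int_\delta^\infty t^a u_t u_{tr}\,dt + [\,d_s\,t^a u_t\,u_r\,]_{t=\delta}^{t=\infty}$. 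The $u_t u_{tr}$ term cancels the second term of $H_\delta'$, leaving $H_\delta'(r) = -d_s\tfrac{N-1}{r}\int_\delta^\infty t^a u_r^2\,dt + [\,d_s t^a u_t u_r\,]_{t=\delta}^\infty$. By Proposition~\ref{prop:regu2} the term at $t=\infty$ vanishes, and as $\delta\to 0$ the boundary contribution $-d_s\,\delta^a u_t(r,\delta) u_r(r,\delta) \to V(r)u(r) u_r(r) = \tfrac{d}{dr}\big(\tfrac12 V(r)u(r)^2\big) - \tfrac12 V'(r)u(r)^2$ using the Neumann condition and continuity up to the boundary. Combining with $H(r) = \lim_{\delta\to 0}H_\delta(r) - \tfrac12 V(r)u(r)^2$ and a dominated-convergence argument (legitimized by the uniform bounds of Proposition~\ref{prop:regu2}) yields the distributional identity for $H'$, hence the formula for $\Hav'$; the sign $\Hav'(r)\le 0$ is then immediate from $N\ge 1$, $u_r^2\ge 0$, $V'\ge 0$, and $\eta\ge 0$. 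The hardest point is the rigorous interchange of limits at the boundary $t=0$ — controlling $\delta^a u_t(r,\delta) u_r(r,\delta)$ and its $r$-integral against a test function uniformly in $\delta$ — which is exactly where the fine boundary estimates of Proposition~\ref{prop:regu2} (and the $\gamma > \max\{0,1-2s\}$ hypothesis on $V$) are essential.
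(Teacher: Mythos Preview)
Your proposal is correct and follows essentially the same route as the paper. Both arguments differentiate the averaging kernel, move the derivative onto $H$ via integration by parts in $\bar r$, substitute the interior equation $u_{rr}+\tfrac{N-1}{r}u_r+u_{tt}+\tfrac{a}{t}u_t=0$, and integrate by parts in $t$ to turn $u_r\partial_t(t^a u_t)$ into a boundary term that is evaluated via the Neumann condition $d_s t^a u_t\to Vu$. The only organizational difference is that the paper performs the $t$-integration by parts directly on $(0,\infty)$ while keeping the $\eta$-average throughout, whereas you first truncate at $t=\delta$, compute $H_\delta'$ classically, and pass $\delta\to 0$; the ingredients (interior smoothness, the decay bounds of Proposition~\ref{prop:regu2}, and the boundary regularity $t^a u_t\in C^{0,\beta}$) are identical in both. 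One small slip: from the Neumann condition $d_s t^a u_t\to Vu$, the boundary contribution at $t=\delta$ in your $H_\delta'$ formula is $+d_s\delta^a u_t u_r\to +V(r)u(r)u_r(r)$, not $-d_s\delta^a u_t u_r$; this $+Vuu'$ then cancels against the $-Vuu'$ coming from $-\tfrac{d}{dr}(\tfrac12 Vu^2)$, leaving the claimed $-\tfrac12 V'u^2$ as you intended.
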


\begin{remark*} {\em
Note that $V'(r)$ denotes the weak derivative of $V$. Since $V \in C^{0,\gamma}(\R^N)$, we have $V' \in L^{1}_{\mathrm{loc}}(\R^N)$. 
}

\end{remark*}

\begin{proof}
First, we note that
\begin{align*}
\Hav'(r) & =  \int_0^{+\infty} H(\bar{r}) \partial_r \left ( \eta \left ( \frac{\bar{r}}{r} \right ) \right ) \frac{d\bar{r}}{r} - \frac{1}{r} \Hav(r) \\
& = - \int_{\bar{r}=0}^{+\infty} \left \{ d_s \int_{t=0}^{+\infty} \frac{t^a}{2} \left \{ u_r^2 - u_t^2 \right \}(\bar{r},t) \, dt - \frac 1 2 V(\bar{r}) u(\bar{r})^2 \right \} \partial_{\bar{r}} \left \{ \eta \left ( \frac{\bar{r}}{r} \right ) \right \} \frac{\bar{r}}{r^2} \, d\bar{r} - \frac{1}{r} \Hav(r),
\end{align*}
since  $\partial_r \{ \eta ( \frac{\bar{r}}{r}) \} = - \partial_{\bar{r}} \{ \eta( \frac{\bar{r}}{r} ) \}  \frac{\bar{r}}{r} $. Next, by applying Fubini's theorem and integrating by parts with respect to $\bar{r}$ and applying Fubini's theorem again, we obtain
\begin{align*}
\Hav'(r) & =  \int_{\bar{r}=0}^{+\infty} \left \{  d_s \int_{t=0}^{+\infty} t^a \left \{ u_r u_{rr} - u_t u_{tr} \right \} (\bar{r},t) \, dt - V(\bar{r} ) u(\bar{r}) u_r(\bar{r}) \right . \\
& \qquad \left . - \frac{1}{2} V'(\bar{r}) u(\bar{r})^2  \right \} \eta \left ( \frac{\bar{r}}{r} \right ) \frac{d \bar{r} }{r} \\
& = - \int_{\bar{r}=0}^{+\infty} \left \{ d_s \frac{N-1}{\bar{r}} \int_{t=0}^{+\infty} u_r^2(\bar{r},t) \, dt + d_s \int_{t=0}^{+\infty} \partial_t \left \{ t^a u_t u_r Ê\right \} (\bar{r},t) \, dt + V(\bar{r}) u(\bar{r}) u_r(\bar{r}) \right . \\
& \left . \qquad + \frac{1}{2} V'(\bar{r}) u(\bar{r})^2 \right \} \eta \left ( \frac{\bar{r}}{r}  \right) \frac{d \bar{r}}{r} \\
& = - \int_{\bar{r}=0}^{+\infty} \left \{ d_s \frac{N-1}{\bar{r}} \int_{t=0}^{+\infty} u_r^2(\bar{r},t) \, dt + d_s t^a u_t u_r (\bar{r},t) \Big |_{t=0}^{+\infty} + V(\bar{r}) u(\bar{r}) u_r(\bar{r}) \right . \\
& \qquad \left . +\frac{1}{2} V'(\bar{r}) u(\bar{r}^2) \right \}  \eta \left ( \frac{\bar{r}}{r}  \right) \frac{d \bar{r}}{r} \\
& = - \int_{\bar{r}=0}^{+\infty} \left \{ d_s \frac{N-1}{\bar{r}} \int_{t=0}^{+\infty} t^a u_r^2(\bar{r},t) \, dt + \frac{1}{2} V'(\bar{r}) u(\bar{r})^2 \right \} \eta \left ( \frac{\bar{r}}{r} \right ) \frac{d \bar{r}}{r} ,
\end{align*}
which is the desired formula. Notice that, in the first two steps, we used equation \eqref{eq:ext}. Also, note that $\lim_{t \to +\infty} t^a u_t u_r = 0$ due to the decay estimates in Proposition \ref{prop:regu2}.
\end{proof}

\subsection{Completing the Proof of Theorem \ref{thm:linear1}}
Assume that $u(0) = 0$ holds. By \eqref{eq:Havlim} and \eqref{eq:H0}, this implies that $\Hav(0) \leq 0$. On the other hand, we have $\lim_{r \to + \infty}\Hav(r) = 0$ by \eqref{eq:Havlim}. Because $\Hav(r)$ is monotone decreasing thanks to Lemma \ref{lem:Hav}, we conclude 
\be
\Hav(r) \equiv 0 \ \ \mbox{and} \ \ \Hav'(r) \equiv  0.
\ee
 We discuss the cases $N \geq 2$ and $N=1$ separately as follows.

\subsubsection*{Case $N \geq 2$}  Using Lemma \ref{lem:Hav} and the assumption $V'(r) \geq 0$ for a.\,e.~$r$, we deduce that
\be
\int_{\bar{r}=0}^{+\infty} \left \{ \int_{t=0}^{+\infty} u_r^2(\bar{r},t) \, dt  \right \} \eta \left ( \frac{\bar{r}}{r} \right ) \frac{d \bar{r}}{r} = 0 \quad \mbox{for all $r > 0$}.
\ee
Since this holds for any $\eta(\cdot) \in C_0^\infty(\R_+)$ with $0 \leq \eta \leq 1$ with $\int_0^\infty \eta(u) \, du=1$, we conclude that $\int_{t=0}^{+\infty} u_r^2(r,t) \, dt=0$ for almost every $r$. By continuity of $u(r,t)$, this shows that $u_r(r,t) \equiv 0$  and therefore $u(r,t)$ only depends on $t$. But this implies $u(r) = \mbox{const}.$  and hence $u(r) \equiv 0$, because $u(r) \to 0$ as $r \to +\infty$. This completes the proof of Theorem \ref{thm:linear1} for any dimension $N \geq 2$.

\subsubsection*{Case $N=1$} In this case, we have to provide some additional arguments, since the integral term in Lemma \ref{lem:Hav} containing $u_r^2(r,t)$ is absent when $N=1$. In fact, by assuming that $V' > 0$ for a.\,e.~$r$, we could easily derive from Lemma \ref{lem:Hav} that $u\equiv 0$ holds, using the potential term in that identity. However, we shall now give a proof that only assumes that $V' \geq 0$ holds for a.\,e.~$r$.

Indeed, since we have $\Hav(r) \equiv 0$ (for any bump function $\eta$ as above), we deduce that $H(r) \equiv 0$. From  \eqref{eq:H0} and the assumption that $u(0)=0$, we deduce that
$$
\int_0^{+\infty}  u_t^2(0,t) t^{1-2s} \,dt = 0 .
$$
By continuity of $u(r,t)$, this shows that $u_t(0,t) = 0$ for every $t > 0$. We now prove that this implies that $u\equiv 0$ as follows. Recall that $u(r,t)$ is given by the $s$-Poisson extension. Thus, for every $t> 0$, we have
\be \label{eq:poisson}
u(0,t) = c_{n,s} \int_0^{+\infty} \frac{t^{2s} u(y)}{(t^2 + y^2)^{(1+2s)/2}} \, dy 
\ee
with some constant $c_{n,s} > 0$. By assumption $u(y)$ is bounded and vanishes at infinity, which implies that $\lim_{t \to +\infty} u(0,t) = 0$ from \eqref{eq:poisson}. (Indeed, for every $\eps>0$ there is an $R_\eps>0$ such that $|u(y)| \leq M \chi_{\{|y|\leq R_\eps\}} + \eps \chi_{\{|y|> R_\eps\}}$. Now plug this into the integral above and use the fact the integral of $t^{2s} (t^2+y^2)^{-(1+2s)/2}$ is finite and independent of $t$.) Recalling that $u_t (0,t) = 0$ for every $t > 0$, we conclude that $u(0,t) = 0$ for every $t >0$. Thus, by repeated differentiation of \eqref{eq:poisson} with respect to $t>0$ and choosing $t=1$, we obtain that
\be
\int_0^{+\infty} \frac{u(y)}{(1+ y^2)^{(1+2s)/2+k}} \,dy = 0
\quad\text{for every}\ k\in\N_0 \,.
\ee
Now, we define a function $f$ on $[0,1]$ by $f(1/(1+y^2)) = (2y)^{-1} (1+y^2)^{-(-1+2s)/2} u(y)$ and change variables to $\alpha=1/(1+y^2)$. This gives us
\be \label{eq:weier}
\int_0^1 \alpha^k f(\alpha) \,d\alpha = 0
\quad\text{for every}\ k\in\N_0 \,.
\ee
Since $\int_0^\infty |f(\alpha)|\,d\alpha = \int_0^\infty (1+y^2)^{-(1+2s)/2} |u(y)|\,dy$ is finite, we see that $f(\alpha)\,d\alpha$ is a finite signed measure on $[0,1]$. By Weierstrass' theorem and the Riesz representation theorem, we conclude from \eqref{eq:weier} that $f\equiv 0$ holds, which implies  that $ u\equiv 0$, as desired.

The proof of Theorem \ref{thm:linear1} is now complete. \hfill $\square$

\begin{remark*} {\em
The proof of Theorem \ref{thm:linear1} actually shows that $u \not \equiv 0$ if and only if $u(0) \neq 0$ and $V(0) < 0$.}
\end{remark*}

\section{Nodal Bounds via Extension to $\R^{N+1}_+$}

\label{sec:osc}

The present section serves as a preparation for the proof of  Theorem 2. We will derive oscillation bounds for radial eigenfunctions for fractional Schr\"odinger operators $H= (-\DD)^s + V$ on $\R^N$, where the potential $V$ is assumed to satisfy a mild condition (i.\,e., $V$ belongs to an appropriate Kato class.)  As in \cite{FrLe-10}, the strategy in the section is based on a related variational problem posed for functions on the upper half-space $\R^{N+1}_+$. This section follows the related arguments given in \cite{FrLe-10} for $N=1$. Therefore, the following discussion will be rather brief and without details except when necessary. 

However, a decisive difference to \cite{FrLe-10} will be that the oscillation bound for radial eigenfunctions derived in Proposition \ref{prop:osc} below will be not optimal in $N \geq 2$ dimensions. The reason for this is of topological nature stemming from the fact that the set $\R^{N+1}_+ \setminus \{ (0,t) : t > 0\}$ is connected for $N \geq 2$ in contrast to the case when $N=1$. By an additional strategy, we will improve the oscillation bound stated in Proposition \ref{prop:osc} in an optimal way, provided that the potential $V$ additionally satisfies the additional conditions (V1) and (V2). This will be carried out in Section \ref{sec:proofthm2} below, where the proof of Theorem \ref{thm:linear2} will be given.

\subsection{Variational Formulation on $\R^{N+1}_+$}
Let $N \geq 1$ and $s \in (0,1)$ be given. We consider a general class of fractional Schr\"odinger operators
\be
H=(-\DD)^s + V, 
\ee
where the potential $V : \R^N \to \R$ belongs to the so-called Kato class for $(-\DD)^s$ in $\R^N$. We shall denote this condition by $V \in K_s(\R^N)$ in what follows. From \cite{CaMaSi-90} we recall that a measurable function $V : \R^N \to \R$ belongs to $K_s(\R^N)$ if and only if 
\be
\lim_{E \to +\infty} \| ((-\DD)^s + E)^{-1} |V| \|_{L^\infty \to L^\infty} = 0.
\ee
For the readers less familiar with Kato classes, we list the following facts (see, e.\,g.~\cite{CaMaSi-90}).
\begin{itemize}
\item If $V \in K_s(\R^N)$, then $V$ is infinitesimally relatively bounded with respect to $(-\DD)^s$. Therefore $H=(-\DD)^s + V$ defines a unique self-adjoint operator on $L^2(\R^N)$ with form domain $H^s(\R^N)$, and the operator $H$ is bounded from below.
\item If $V \in L^{p}(\R^N)$ with some $\max \{ 1,\frac{N}{2s} \} < p \leq +\infty$, then $V \in K_s(\R^N)$. 
\item If $\psi \in L^2(\R^N)$ is an eigenfunction $H=(-\DD)^s + V$ with $V \in K_s(\R^N)$, then $\psi$ is bounded and continuous.
\end{itemize}
Since $H=(-\DD)^s +V$ is real operator (mapping real functions to real functions), its eigenfunctions can be chosen real-valued, which we will assume from now on. 

Following \cite{FrLe-10}, we now seek a variational characterization of the eigenvalues of $H=(-\DD)^s + V$ in terms of a {\em local energy functional} by using the extension to the upper half-space $\R^{N+1}_+$. From the previous section, we recall the definition 
\be
a=1-2s
\ee 
for $s \in (0,1)$ given. We introduce the functional
\be \label{def:hfrak}
\mathfrak{H}(u) = d_s \iint_{\R^{N+1}_+} |\nabla u|^2 t^a \, dx \, dt + \int_{\R^N} V(x) |u(x,0)|^2 \, dx 
\ee
defined for $u \in \mathcal{H}^{1,a}(\R^{N+1}_+)$, where $u(x,0)$ denotes its trace on $\partial \R^{N+1}_+$ (see below). As usual $d_s > 0$ denotes the constant from \eqref{eq:ds}. The space $\mathcal{H}^{1,a}(\R^{N+1}_+)$ is given by
\be
\mathcal{H}^{1,a}(\R^{N+1}_+) = \{ u \in \dot{\mathcal{H}}^{1,a}(\R^{N+1}_+) : u(x,0) \in L^2(\R^N)  \}
\ee
Here the space $\dot{\mathcal{H}}^{1,a}(\R^{N+1}_+)$ is defined as the completion of $C^\infty_0(\R^{N+1}_+)$ with respect to the homogeneous Sobolev norm
\be
\| u \|_{\dot{\mathcal{H}}^{1,a}}^2 = \iint_{\R^{N+1}_+} |\nabla u|^2 t^{a} \, dx \, dt. 
\ee
By Hardy's inequality, we see that $\dot{\mathcal{H}}^{1,a}(\R^{N+1}_+)$ is a space of functions if $0 < s < N/2$ (note for $N \geq 2$ this holds true for all $s\in (0,1)$), whereas for $1/2 \leq s  <1$ when $N=1$ it is a space of functions modulo additive constants. (See also \cite{FrLe-10} for more details on this.)  By adapting the arguments in \cite{FrLe-10}, it can be seen that there exists a well-defined trace operator $T : \dot{\mathcal{H}}^{1,a}(\R^{N+1}_+) \to \dot{H}^s(\R^N)$, where we often write $(Tu)(x) = u(x,0)$ for notational convenience. Moreover, we have the sharp trace inequality
\be \label{ineq:trace1}
\iint_{\R^{N+1}_+} | \nabla u|^2 t^a \, dx \, dt \geq \frac{1}{d_s} \int_{\R^N} |(-\DD)^{s/2} Tu|^2 \, dx.
\ee 
As an amusing aside, we remark that the constant on the right-hand side $1/d_s$ does not depend on the dimension $N$.
Finally, we mention the following fact:
\be \label{ineq:trace3}
\mbox{Equality  holds in \eqref{ineq:trace1} if and only if $u =\cE_s f$ for some $f \in \dot{H}^s(\R^N)$.} 
\ee
Recall that $\cE_s f = P_s(t, \cdot) \ast f$ denotes the $s$-Poisson extension of $f : \R^N \to \R$ to the upper halfspace $\R^{N+1}_+$. Regarding the proofs of \eqref{ineq:trace1}--\eqref{ineq:trace3}, we remark that these assertions follow by an adaptation of the discussion in \cite{FrLe-10}. We omit the details. 

Since we are ultimately interested in $H=(-\DD)^s + V$ with radial potentials $V \in K_s(\R^N)$, it is natural to introduce the closed subspace 
\be
\mathcal{H}^{1,a}_{\mathrm{rad}}(\R^{N+1}_+) = \big \{ u \in \mathcal{H}^{1,a}(\R^{N+1}_+) : \mbox{$x \mapsto u(x,t)$ is radial in $x \in \R^N$ for a.\,e.~$t > 0$} \big \} .
\ee
Now we are ready for the following characterization of discrete eigenvalues of $H=(-\DD)^s + V$ in terms of the local energy functional $\mathfrak{H}(u)$ introduced above. See also \cite{BaKu-04} for a similar result for $\sqrt{-\Delta}$ on the interval $(-1,1)$.

\begin{prop} \label{prop:varia}
Let $N \geq 1$, $0 < s < 1$, and $V \in K_s(\R^N)$. Suppose that $n \geq 1$ is an integer and assume that $H=(-\DD)^s + V$ acting on $L^2(\R^N)$ has at least $n$ eigenvalues
$$
E_1 \leq E_2 \leq \cdots \leq E_n < \inf \sigma_{\mathrm{ess}}(H) .
$$
Furthermore, let $M$ be an $(n-1)$-dimensional subspace of $L^2(\R^N)$ spanned by eigenfunctions corresponding to the eigenvalues $E_1, \ldots, E_{n-1}$. Then we have
$$
E_n = \inf \Big \{ \mathfrak{H}(u) : u \in \mathcal{H}^{1,a}(\R^{N+1}_+), \;  \int_{\R^N} |u(x,0)|^2 \, dx = 1, \,  u(\cdot,0) \perp M Ê\Big \}
$$
with $\mathfrak{H}(u)$ defined in \eqref{def:hfrak}. Moreover, the infimum is attained if and only if $u=\cE_a f$ with $f \in H^s(\R^N)$, where $\| f\|_2^2 =1$ and $f\in M^\bot$ is a linear combination of eigenfunctions of $H$ corresponding to the eigenvalue $E_n$.

Finally, if $V \in K_s(\R^N)$ is radial, then the same result holds true with $L^2(\R^N)$ and $\mathcal{H}^{1,a}(\R^{N+1}_+)$ replaced by $L^2_{\mathrm{rad}}(\R^N)$ and $\mathcal{H}^{1,a}_{\mathrm{rad}}(\R^{N+1}_+)$, respectively, and the eigenvalues counted in the corresponding subspaces.
\end{prop}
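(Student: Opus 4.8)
The plan is to establish the variational characterization by the standard two-sided argument, exploiting the sharp trace inequality \eqref{ineq:trace1} and its equality case \eqref{ineq:trace3}. First I would prove the lower bound. Given any admissible competitor $u \in \mathcal{H}^{1,a}(\R^{N+1}_+)$ with $\int_{\R^N} |u(x,0)|^2\,dx = 1$ and $f := Tu = u(\cdot,0) \perp M$, the trace inequality gives
\be
\mathfrak{H}(u) = d_s \iint_{\R^{N+1}_+} |\nabla u|^2 t^a \, dx\,dt + \int_{\R^N} V |f|^2 \, dx \geq \int_{\R^N} |(-\DD)^{s/2} f|^2 \, dx + \int_{\R^N} V|f|^2\,dx = \pscal{f, Hf}.
\ee
Here one must check $f \in H^s(\R^N)$ (from the mapping property of $T$ into $\dot H^s$ together with $f \in L^2$) and that the quadratic form $\pscal{f,Hf}$ is well-defined and bounded below on account of $V \in K_s(\R^N)$ being infinitesimally form-bounded. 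Since $f \perp M$ and $M$ is spanned by eigenfunctions for $E_1,\dots,E_{n-1}$, the min-max principle (Courant--Fischer) for the self-adjoint operator $H$ with form domain $H^s(\R^N)$ yields $\pscal{f,Hf} \geq E_n \|f\|_2^2 = E_n$. Hence $\mathfrak{H}(u) \geq E_n$ for every admissible $u$, so the infimum is $\geq E_n$.

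Next I would show the infimum is attained and equals $E_n$. Let $f$ be a normalized eigenfunction of $H$ for $E_n$ with $f \perp M$ (possible by standard spectral theory, choosing $f$ orthogonal to the finitely many lower eigenfunctions), and set $u = \cE_s f$, its $s$-Poisson extension. One checks $u \in \mathcal{H}^{1,a}(\R^{N+1}_+)$ with $Tu = f$ (this uses the regularity of $f$, which is bounded and continuous by the Kato-class property, and the known mapping properties of $\cE_s$). By the equality case \eqref{ineq:trace3}, $\iint |\nabla u|^2 t^a = d_s^{-1}\int |(-\DD)^{s/2} f|^2$, so $\mathfrak{H}(u) = \int |(-\DD)^{s/2}f|^2 + \int V|f|^2 = \pscal{f,Hf} = E_n$. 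This competitor is admissible and achieves the lower bound, so the infimum equals $E_n$ and is attained. For the ``only if'' direction of the characterization of minimizers: if $u$ is a minimizer, then both the trace inequality and the min-max inequality above must be equalities; equality in \eqref{ineq:trace1} forces $u = \cE_s f$ with $f = Tu$ by \eqref{ineq:trace3}, and equality in min-max forces $f$ to lie in the span of the $E_n$-eigenspace within $M^\perp$. Conversely any such $u = \cE_s f$ is a minimizer by the computation just done.

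Finally, for the radial statement: since $H$ commutes with rotations and $V$ is radial, $H$ restricted to $L^2_{\mathrm{rad}}(\R^N)$ is again self-adjoint with form domain $H^s_{\mathrm{rad}}(\R^N)$, and its discrete eigenvalues below $\inf\sigma_{\mathrm{ess}}$ are exactly the radial eigenvalues $\Er_1 \leq \Er_2 \leq \cdots$. The trace operator $T$ maps $\mathcal{H}^{1,a}_{\mathrm{rad}}(\R^{N+1}_+)$ into $\dot H^s_{\mathrm{rad}}(\R^N)$, and $\cE_s$ sends radial functions to cylindrically symmetric extensions, hence into $\mathcal{H}^{1,a}_{\mathrm{rad}}$; so both halves of the argument restrict verbatim to the radial subspaces. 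The main obstacle — more bookkeeping than genuine difficulty — is the careful verification that the trace map and the Poisson extension interact correctly with the function spaces at the endpoint regularity, in particular that $Tu \in H^s$ (not merely $\dot H^s$) for admissible $u$ and that $\cE_s f \in \mathcal{H}^{1,a}$ for an $H$-eigenfunction $f$; these rely on the cited properties of $L_s = \mathrm{div}(t^{1-2s}\nabla\,\cdot)$ from \cite{FaKeSe-82,CaSi-07} and the Kato-class regularity of eigenfunctions, and I would simply reference the parallel treatment in \cite{FrLe-10}, filling in only the points where the higher-dimensional or radial setting differs.
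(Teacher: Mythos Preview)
Your proposal is correct and follows essentially the same approach as the paper's own proof: use the sharp trace inequality \eqref{ineq:trace1} to bound $\mathfrak{H}(u)$ below by the $H$-quadratic form of the trace, invoke the standard min-max characterization of $E_n$, and use the equality case \eqref{ineq:trace3} to identify minimizers as $s$-Poisson extensions of $E_n$-eigenfunctions; the radial statement follows because $\cE_s$ preserves radiality. The paper's proof is terser (it simply cites \cite{FrLe-10} and the ``usual variational characterization''), but your more detailed write-up matches it step for step.
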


\begin{proof}
We argue in the same way as in \cite{FrLe-10}. That is, by \eqref{ineq:trace1}, we see that the infimum on the right-hand side is bounded from below by
$$
\inf \Big \{ \int_{\R^N} | (-\DD)^{s/2} f|^2 \,dx + \int_{\R^N} V|f|^2 \, dx : f \in H^s(\R^N), \; \| f \|_{L^2} = 1, \; f \perp M \Big \} ,
$$
where equality is attained if and only if $u = \cE_a f$, as we conclude from \eqref{ineq:trace3}. The assertions now follow from the usual variational characterization for the eigenvalues of $H$. 

Finally, suppose that $V \in K_s(\R^N)$ is radial and let $H=(-\DD)^s + V$ act on $L^2_{\mathrm{rad}}(\R^N)$. Now, we just note that if $f\in (L^2_{\mathrm{rad}} \cap H^{s})(\R^N)$ then $\cE_a f\in\mathcal{H}^{1,a}_{\mathrm{rad}}(\R^{N+1}_+)$, which follows from the fact that the convolution kernel $P_s(x,t)$ is a radial function of $x \in \R^N$. \end{proof}

With Proposition \ref{prop:varia} at hand, we now proceed by deriving bounds on the number of nodal domains for extension of eigenfunctions of $H$ to the upper half-space $\R^{N+1}_+$. Recall that $H=(-\DD)^s + V$ is a real operator, and hence any eigenfunction can be chosen real-valued. Furthermore, we recall that any eigenfunction $\psi$ of $H=(-\DD)^s+V$ with $V \in K_s(\R^N)$ is continuous. Therefore, its extension $\cE_a \psi$ belongs to $C^0(\overline{\R^{N+1}_+})$ and we can consider its nodal domains, which are defined as the connected components of the open set $\{ (x,t) \in \R^{N+1}_+ : (\cE_a \psi)(x,t) \neq 0 \}$ in the upper half-space $\R^{N+1}_+$. We have the following result based on \cite{FrLe-10}. See also \cite{BaKu-04, AlMa-94} for related results for $\sqrt{-\Delta}$ on an interval.

\begin{prop} \label{prop:nodal}
Let $N \geq 1$, $0 < s < 1$, and $V \in K_s(\R^N)$. Suppose that $n \geq 1$ is an integer and assume that $H=(-\DD)^s + V$ acting on $L^2(\R^N)$ has at least $n$ eigenvalues
$$
E_1 \leq E_2 \leq \cdots \leq E_n < \inf \sigma_{\mathrm{ess}}(H) .
$$
If $\psi_n \in H^s(\R^N)$ is a real eigenfunction of $H$ with eigenvalue $E_n$, then its extension $\cE_a \psi_n$, with $a=1-2s$, has at most $n$ nodal domains on $\R^{N+1}_+$.

Moreover, if $V \in K_s(\R^N)$ is radial, then the same result holds true with $L^2(\R^N)$ replaced by $L^2_{\mathrm{rad}}(\R^N)$ and $\psi_n \in H^s(\R^N)$ being a radial and real eigenfunction of $H$ for the $n$-th radial eigenvalue $\Er_n$.
\end{prop}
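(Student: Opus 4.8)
The plan is to run a Courant-type nodal domain argument adapted to the weighted Sobolev setting, exactly along the lines of the $N=1$ case in \cite{FrLe-10}, using the variational characterization from Proposition \ref{prop:varia} as the engine. Suppose, for contradiction, that the extension $w = \cE_a\psi_n$ has at least $n+1$ nodal domains $\Omega_1,\dots,\Omega_{n+1}$ in $\R^{N+1}_+$. For each $j=1,\dots,n+1$ let $w_j$ be the restriction of $w$ to $\Omega_j$, extended by zero to all of $\R^{N+1}_+$; the first point to check is that each $w_j$ lies in $\mathcal{H}^{1,a}(\R^{N+1}_+)$ (a standard cutoff/approximation argument for functions vanishing on the boundary of an open set, using that $w$ is continuous on $\overline{\R^{N+1}_+}$ and belongs to $\dot{\mathcal{H}}^{1,a}$), and that the $w_j$ have mutually disjoint supports, hence are linearly independent in $\mathcal{H}^{1,a}$. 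Consider the $(n+1)$-dimensional span $S = \mathrm{span}\{w_1,\dots,w_{n+1}\}$. Because $w = \cE_a\psi_n$ is the $s$-Poisson extension of an eigenfunction, one has, using the weak formulation of the extension problem \eqref{eq:extension-s}--\eqref{eq:DirichNeu} and the fact that each $w_j$ vanishes outside $\Omega_j$, the identity $\mathfrak{H}(w_j) = E_n \int_{\R^N}|w_j(x,0)|^2\,dx$ for every $j$, and moreover $\mathfrak{H}$ is diagonal on $S$ with respect to the trace $L^2$ inner product (the cross terms vanish by disjointness of supports). Consequently $\mathfrak{H}(v) = E_n\int_{\R^N}|v(x,0)|^2\,dx$ for every $v \in S$.

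Now I would invoke Proposition \ref{prop:varia} with index $n+1$: if $H$ had an $(n+1)$-st eigenvalue below $\inf\sigma_{\mathrm{ess}}(H)$, then the value $E_n$ would have to be $\geq E_{n+1}$ after one intersects $S$ with the orthogonality constraint $u(\cdot,0)\perp M$, where $M$ is spanned by the first $n$ eigenfunctions; since $\dim S = n+1 > n = \dim M$, there is a nonzero $v \in S$ with $v(\cdot,0)\perp M$ and $\int |v(x,0)|^2 = 1$, giving $E_{n+1} \leq \mathfrak{H}(v) = E_n$, hence $E_{n+1} = E_n$ and $v$ is an optimizer. But then, by the ``attained if and only if $u = \cE_a f$'' clause of Proposition \ref{prop:varia}, $v$ must itself be the $s$-Poisson extension of an eigenfunction of $H$; in particular $v$ is real-analytic in $t > 0$ and cannot vanish on a nonempty open subset of $\R^{N+1}_+$ unless $v \equiv 0$. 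This contradicts the fact that $v$, being a nontrivial linear combination of the $w_j$, vanishes identically on the nonempty open set $\Omega_{j_0}$ for any index $j_0$ not appearing in that combination (there is at least one such index since $n+1 \geq 2$). If instead $H$ has no $(n+1)$-st discrete eigenvalue, one argues similarly: the same test function $v$ (orthogonal to $M$, unit trace-norm) would give $\inf\sigma_{\mathrm{ess}}(H) \leq \mathfrak{H}(v) = E_n < \inf\sigma_{\mathrm{ess}}(H)$, an outright contradiction. Either way we conclude $w$ has at most $n$ nodal domains.

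For the radial statement, everything goes through verbatim after restricting all function spaces to their radial subspaces: by Proposition \ref{prop:varia} the characterization of $\Er_n$ holds within $\mathcal{H}^{1,a}_{\mathrm{rad}}(\R^{N+1}_+)$, and the extension $\cE_a\psi_n$ of a radial eigenfunction is cylindrically symmetric, so its nodal domains are unions of such domains; the key observation here is that if $\psi_n$ is radial then each $w_j$, being the restriction of a cylindrically symmetric function to a nodal domain (which is itself cylindrically symmetric), again lies in $\mathcal{H}^{1,a}_{\mathrm{rad}}$, so the subspace $S$ sits inside the radial space and the radial version of Proposition \ref{prop:varia} applies. The main obstacle I anticipate is the rigidity input: one must be sure that an optimizer for the variational problem is genuinely of the form $\cE_a f$ and that such extensions enjoy enough regularity (real-analyticity in the interior for the degenerate operator $\mathrm{div}(t^a\nabla\cdot)$, via \cite{FaKeSe-82} and the structure of the Poisson kernel) to rule out vanishing on open sets; this is precisely what forces the contradiction and is the one place where the argument is not a formal repetition of the classical Courant proof. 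A secondary, more technical point is the membership $w_j \in \mathcal{H}^{1,a}(\R^{N+1}_+)$ and the vanishing of cross terms in $\mathfrak{H}$, both of which are handled by the approximation and integration-by-parts tools already developed for the weighted space, as in \cite{FrLe-10}.
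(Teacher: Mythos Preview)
Your overall strategy is correct and matches the paper's Courant-type argument: build a trial function from restrictions of $\cE_a\psi_n$ to nodal domains, identify it as an optimizer in Proposition \ref{prop:varia}, and then invoke unique continuation for $\mathrm{div}(t^a\nabla\,\cdot)=0$ to derive a contradiction. The technical points you flag (membership $w_j\in\mathcal H^{1,a}$, vanishing of cross terms, and the ``equality iff $u=\cE_a f$'' clause) are exactly the right ones.

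There is, however, a genuine gap in your contradiction step. You take $S=\mathrm{span}\{w_1,\dots,w_{n+1}\}$, produce a nonzero $v\in S$ with $v(\cdot,0)\perp M$ (here $\dim M=n$), and then assert that ``$v$ vanishes identically on the nonempty open set $\Omega_{j_0}$ for any index $j_0$ not appearing in that combination (there is at least one such index since $n+1\ge 2$).'' But the orthogonality constraints are $n$ linear conditions on $n+1$ coefficients, which typically yield a one-dimensional solution space with \emph{all} coefficients nonzero. If the total number of nodal domains is exactly $n+1$, then $v$ is nonzero on every $\Omega_j$ and vanishes only on the nodal set of $w$, which has empty interior; unique continuation gives you nothing.

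The paper avoids this by using only $n$ of the nodal domains: one sets $u=\sum_{j=1}^{n}\gamma_j(\cE_a\psi_n)\mathds 1_{\Omega_j}$, chooses $(\gamma_j)$ so that $u(\cdot,0)\perp M$ with $M$ the $(n-1)$-dimensional span of the first $n-1$ eigenfunctions, and applies Proposition \ref{prop:varia} at index $n$ (not $n+1$). Then $\mathfrak H(u)=E_n$, so $u=\cE_a f$ is a genuine extension, but by construction $u\equiv 0$ on the nonempty open set $\Omega_{n+1}$, and unique continuation forces $u\equiv 0$, the desired contradiction. This choice also eliminates your case split on whether a discrete $E_{n+1}$ exists, since the $n$-th eigenvalue is given by hypothesis. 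The fix to your write-up is therefore minimal: drop one $w_j$ from $S$ and work with the variational characterization of $E_n$ rather than $E_{n+1}$.
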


\begin{remark*} {\em
Note that the assertion about the case with radial potentials is an improvement in general: Suppose that  $V \in K_s(\R^N)$ is radial. Let $E_1 \leq E_2 \leq \ldots$ and let $E_{1}^{\mathrm{rad}} \leq E_2^{\mathrm{rad}} \leq \ldots$ denote the discrete eigenvalues of $H=(-\DD)^s + V$ acting on $L^2(\R^N)$ and acting on $L^2_{\mathrm{rad}}(\R^N)$, respectively. Then $E_n = E_m^{\mathrm{rad}}$ for some $m \leq n$.}
\end{remark*}

\begin{proof}
This follows from a variational argument in the spirit of Courant's nodal domain theorem. We follow the arguments given in \cite{FrLe-10}. For the reader's convenience, we provide the details of the proof as follows.

Suppose that $\cE_a \psi_n$ has nodal domains $\Omega_1, \ldots, \Omega_m \subset \R^{N+1}_+$ with $m \geq n+1$. Since $\cE_a \psi_n$ is continuous up to the boundary and $\psi_n \not \equiv 0$, we see that $\overline{\Omega}_j \cap \partial \R^{N+1}_+ \neq \emptyset$ for some $j=1,\ldots,m$. Without loss of generality, we assume that $j=1$ holds. Now, we consider the trial function
\be
u = \sum_{j=1}^n (\cE_a \psi_n) \gamma_j \mathds{1}_{\Omega_j},
\ee
where $\gamma_j \in \R$ are constants and $1_{A}$ denotes the characteristic function of a set $A \subset \R^{N+1}_+$. Note that $u \in \mathcal{H}^{1,a}(\R^{N+1}_+)$ with $\nabla u = \sum_{j=1}^n (\nabla \cE_a \psi_n) \gamma_j \mathds{1}_{\Omega_j}$. Next, let $M$ be an $(n-1)$-dimensional subspace of $L^2(\R^N)$ spanned by eigenfunctions of $H$ with eigenvalues $E_1, \ldots, E_{n-1}$. We can choose $\gamma_j \in \R$ such that $u(\cdot,0) \perp M$ and $\| u(\cdot,0) \|_{L^2} = 1$. Furthermore, following the arguments in \cite{FrLe-10}, a calculation yields that 
\be
\mathfrak{H}(u) = \lambda_n \sum_{j=1}^n |\gamma_j|^2 \int_{\overline{\Omega}_j \cap \partial \R^{N+1}_+} |u(x,0)|^2 \, dx = \lambda_n \| u(\cdot, 0) \|_{L^2}^2 = \lambda_n.
\ee
Thus equality is attained in Proposition \ref{prop:varia} and hence $u = \cE_s f$, where $f \in M^\perp$ is a linear combination of eigenfunctions of $H$ with eigenvalue $E_n$. In particular, the non-trivial function $u=\cE_a f$ satisfies 
\be
\mathrm{div} \, (t^a \nabla u) = 0 \ \ \mbox{in} \ \ \R^{N+1}_+.
\ee
Since $u \equiv 0$ in the open and non-empty set $\Omega_{n+1} \subset \R^{N+1}_+$, we conclude from unique continuation that $u \equiv 0$ on $\R^{N+1}_+$. But this is a contradiction. Hence $\cE_a \psi_n$ has at most $n$ nodal domains on $\R^{N+1}_+$.

Finally, we assume that $V \in K_s(\R^N)$ is radial and we consider $H=(-\DD)^s + V$ acting in $L^2_{\mathrm{rad}}(\R^N)$. In this case, the previous arguments apply in verbatim way by replacing $\mathcal{H}^{1,a}(\R^{N+1}_+)$ with $\mathcal{H}^{1,a}_{\mathrm{rad}}(\R^{N+1}_+)$.  Note that $\cE_a \psi_n \in \mathcal{H}^{1,a}_{\mathrm{rad}}(\R^{N+1}_+)$ whenever $\psi_n \in H^s(\R^N)$ is radial. Clearly, the nodal domains of $\cE_a \psi_n$ are cylindrically symmetric with respect to the $t$-axis. In particular, the trial function $u = \sum_{j=1}^n \ (\cE_a \psi_n) \gamma_j 1_{\Omega_j}$ belongs to $\mathcal{H}^{1,a}_{\mathrm{rad}}(\R^{N+1}_+)$.

The proof of Proposition \ref{prop:nodal} is now complete. \end{proof}

\subsection{An Oscillation Estimate}

First, we define the number of sign changes of a radial and continuous function $\psi$ on $\R^N$.

\begin{definition}
Let $\psi \in C^0(\R^N)$ be radial and let $M \geq 1$ be an integer. We say that $\psi(r)$ changes its sign $M$ times on $(0, +\infty)$, if there exist $0 < r_1 < \cdots < r_{M+1}$ such that $\psi(r_i) \neq 0$ for $i=1, \ldots, M+1$ and $\mathrm{sign} ( \psi(r_i) ) =- \mathrm{sign} ( \psi(r_{i+1}))$ for $i=1, \ldots,M$.
\end{definition}

We can now state the following oscillation estimate.

\begin{prop} \label{prop:osc}
Let $N \geq 1$ and $0 < s <1$. Suppose that $V \in K_s(\R^N)$ is a radial potential and consider $H=(-\DD)^s + V$ acting on $L^2_{\mathrm{rad}}(\R^N)$. Let $\Er_2 < \inf \sigma_{\mathrm{ess}}(H)$ be the second eigenvalue of $H$ acting on $L^2_{\mathrm{rad}}(\R^N)$ and suppose that $\psi_2 \in L^2(\R^N)$ is a radial and real-valued solution of $H \psi_2 = \Er_2 \psi_2$. Then $\psi_2$ changes its sign at most twice on $(0, +\infty)$.
\end{prop}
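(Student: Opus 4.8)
The plan is to apply the nodal-domain bound of Proposition \ref{prop:nodal} with $n=2$ to the radial eigenfunction $\psi_2$, and then translate the topological statement ``$\cE_a\psi_2$ has at most $2$ nodal domains on $\R^{N+1}_+$'' into the one-dimensional sign-change statement for $\psi_2(r)$ on $(0,+\infty)$. First I would invoke Proposition \ref{prop:nodal} (radial version), which gives that the extension $w=\cE_a\psi_2$ has at most two nodal domains in $\R^{N+1}_+$; here we use that $\psi_2$ is continuous (since $V\in K_s(\R^N)$) so that $w\in C^0(\overline{\R^{N+1}_+})$ and the nodal domains of $w$ are well-defined open sets, cylindrically symmetric about the $t$-axis. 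Next I would argue by contradiction: suppose $\psi_2$ changes sign at least three times, so there are radii $0<r_1<r_2<r_3<r_4$ with $\psi_2(r_i)\neq 0$ and consecutive signs alternating. Since $w(x,0)=\psi_2(|x|)$, the four points $(x_i,0)$ with $|x_i|=r_i$ lie in $\overline{\R^{N+1}_+}$, have alternating signs, and hence lie in at least ... well, the subtlety is that points of the same sign can still be separated; so the contradiction is not yet immediate and this is where the work is.

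The key step — and the main obstacle — is to show that $w$ has at least one nodal domain whose boundary trace on $\{t=0\}$ is concentrated near each $r_i$, i.e. that sign changes on the boundary force distinct nodal domains. Here is how I would do it. Consider the boundary annuli: because $\psi_2$ is continuous and changes sign between $r_i$ and $r_{i+1}$, there are boundary points $P_i=(x_i,0)$, $i=1,\dots,4$, with $w(P_i)>0$ for $i$ odd and $w(P_i)<0$ for $i$ even (after relabeling). Let $\Omega^{(i)}$ be the nodal domain of $w$ containing $P_i$ in its closure (each $P_i$ lies on $\partial\Omega^{(i)}$ for a unique nodal domain by continuity, since $w$ does not vanish identically near $P_i$ on the slice $t=0$). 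I claim $\Omega^{(1)},\Omega^{(2)},\Omega^{(3)},\Omega^{(4)}$ cannot all be covered by only two nodal domains in a way consistent with the sign pattern: $\Omega^{(1)},\Omega^{(3)}$ are ``positive'' domains and $\Omega^{(2)},\Omega^{(4)}$ are ``negative'' domains, so with only two nodal domains total we would need $\Omega^{(1)}=\Omega^{(3)}$ and $\Omega^{(2)}=\Omega^{(4)}$. To rule this out I would use a separation/Jordan-type argument in the half-plane slice $\{(r,t):r>0,\,t\ge 0\}$ (valid by cylindrical symmetry): the positive domain $\Omega^{(2)}=\Omega^{(4)}$ — more precisely its image in the $(r,t)$ half-plane — would have to be a connected set touching the boundary segment near $r_2$ and near $r_4$, while the negative domain connecting $r_1$ and $r_3$ would have to ``pass between'' them along the boundary, which is impossible in the half-plane because a connected set in the closed half-plane touching the boundary at $r_2<r_4$ together with the boundary segment $[r_2,r_4]\times\{0\}$ separates the point $r_3$ from infinity, hence from the boundary points $r_1$ and from $t=+\infty$; this contradicts connectedness of the negative domain containing both $r_1$ and $r_3$. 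This is exactly the planar Jordan-curve obstruction that makes the bound ``at most twice'' (rather than ``at most once'') the natural output here — one does not get the optimal bound from this argument, which is why the paper later improves it in Section \ref{sec:proofthm2}.

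Finally I would assemble the pieces: the contradiction above shows $\psi_2$ changes sign at most twice on $(0,+\infty)$, completing the proof. I should be careful about two technical points in writing this up: (i) that each boundary sign-point $P_i$ indeed lies in the closure of exactly one nodal domain — this follows because $w$ is continuous and, near a point where $w(x,0)\ne 0$, $w$ has constant sign in a half-ball, so a unique component is determined; and (ii) that the ``separation in the half-plane'' step is legitimate — here I would reduce to the two-dimensional picture using cylindrical symmetry (the nodal domains of $w$ are unions of circles in $x$, so their intersection with the half-plane $\{x_2=\dots=x_N=0,\ x_1>0,\ t>0\}$ captures all the connectivity information) and then apply a standard planar connectedness argument, perhaps phrased via the fact that $\R\times(0,\infty)$ minus a connected closed set touching the boundary has the claimed separation property. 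No genuinely new analytic input beyond Proposition \ref{prop:nodal} and continuity of eigenfunctions is needed; the content is the topological bookkeeping translating nodal domains in $\R^{N+1}_+$ into sign changes on the half-line.
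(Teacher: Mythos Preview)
Your proposal is correct and follows essentially the same strategy as the paper: argue by contradiction with four radii $r_1<r_2<r_3<r_4$ of alternating sign, invoke Proposition \ref{prop:nodal} to force exactly two nodal domains $\Omega_+,\Omega_-$ for the extension, pass to the $(r,t)$ half-plane by cylindrical symmetry, and derive a planar crossing/separation contradiction. The paper phrases the final step as constructing two arcs $\gamma_+\subset\Omega_+^{\mathrm{rad}}$ (joining $r_1$ to $r_3$) and $\gamma_-\subset\Omega_-^{\mathrm{rad}}$ (joining $r_2$ to $r_4$) that must intersect, citing \cite[Lemma D.1]{FrLe-10}; your ``separation'' formulation is the dual of the same topological fact (note a harmless slip: you call $\Omega^{(2)}=\Omega^{(4)}$ the ``positive'' domain where you mean the negative one).
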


\begin{remark*}{\em 
 In Section \ref{sec:proofthm2} below, we will show that $\psi_2$ changes its sign exactly once on $(0,+\infty)$, provided the potential $V$ satisfies the additional conditions (V1) and (V2). Note that in $N=1$ dimension one can deduce a sharper bound as done in \cite{FrLe-10}.
}
\end{remark*}

\begin{proof} We follow the arguments in \cite{FrLe-10} by using nodal domain bounds for the extension of $\psi_2$ to the upper half-space $\R^{N+1}_+$.

We argue by contradiction. Suppose that $\psi_2(r)$ changes its sign at least three times on $(0,+\infty)$.   Thus there exist $0 < r_1 < r_2 < r_3 < r_4$ such that (after replacing $\psi_2$ with $-\psi_2$ if necessary) we have
\be \label{ineq:signs}
\psi_2(r_i) > 0 \ \ \mbox{for} \ \ i=1,3 \quad \mbox{and} \quad \psi_2(r_i) < 0 \ \ \mbox{for} \ \ i=2,4.
\ee
Now let $\Psi_2 = \cE_{a} \psi_2$ with $a=1-2s$ be the extension of $\psi_2$ to $\R^{N+1}_+$. Since $\Psi_2 \in C^0(\overline{\R^{N+1}_+})$ with $\Psi_2(x,0) = \psi_2(x)$, we deduce from \eqref{ineq:signs} that $\Psi_2$ has at least two nodal domains on $\R^{N+1}_+$. Hence, by Proposition \ref{prop:nodal} below, we conclude that $\Psi_2$ has exactly two nodal domains in $\R^{N+1}_+$, which we denote $\Omega_+$ and $\Omega_-$ in what follows. 

Now we use the radial symmetry of $\psi_2=\psi_2(|x|)$ on $\R^N$, which implies cylindrical symmetry of $\Psi_2 = \Psi_2(|x|,t)$ on $\R^{N+1}_+$ with respect to the $t$-axis. Clearly, the nodal domains $\Omega_+$ and $\Omega_-$ are cylindrically symmetric sets with respect to the $t$-axis. Therefore, it suffices to consider the set $\mathcal{N} = \{ r \geq 0 \} \times \{ t > 0\}$ and likewise let $\Omega_{\pm}^{\mathrm{rad}} = \{ (|x|,t) : (x,t) \in \Omega_{\pm} \}$ be the corresponding nodal domains of $\Psi_2$ on $\mathcal{N}$ regarded as a function of $r=|x|$ and $t$. By continuity of $\Psi_2$, we deduce that
\be \label{eq:connect}
(r_i, \eps) \in \Omega_{+}^{\mathrm{rad}} \ \ \mbox{for} \ \ i=1,3 \quad \mbox{and} \quad (r_i, \eps) \in \Omega_{-}^{\mathrm{rad}} \ \ \mbox{for} \ \ i=2,4,
\ee
for all $\eps \in (0,\eps_0)$, where $\eps_0 > 0$ is some sufficiently small constant. Furthermore, note that the sets $\Omega_{\pm}^{\mathrm{rad}} \subset \mathcal{N}$ must be arcwise connected. From this fact and \eqref{eq:connect} we conclude that there exist two injective continuous curves $\gamma_+, \gamma_- \in C^0([0,1]; \overline{\mathcal{N}})$ with $\overline{\mathcal{N}} = \{ r \geq 0 \} \times \{ t \geq 0 \}$ with the following properties.
\begin{itemize}
\item $\gamma_+(0) = (r_1,0), \gamma_+(1) = (r_3,0)$ and $\gamma_+(t) \in \Omega_{+}^{\mathrm{rad}}$ for $t \in (0,1)$.
\item $\gamma_-(0) = (r_2,0), \gamma_-(1) = (r_4,0)$ and $\gamma_-(t) \in \Omega_{-}^{\mathrm{rad}}$ for $t \in (0,1)$.
\end{itemize}
Since $r_1 < r_2 < r_3 < r_4$ holds, we conclude that the curves $\gamma_+$ and $\gamma_-$ intersect in $\mathcal{N}$; e.\,g., this follows from applying \cite[Lemma D.1]{FrLe-10}. But this contradicts $\Omega_{+}^{\mathrm{rad}} \cap \Omega_{-}^{\mathrm{rad}} = \emptyset$. \end{proof}

\section{Proof of Theorem \ref{thm:linear2}}

\label{sec:proofthm2}
Let $N \geq 1$ and $s \in (0,1)$. Consider the $H=(-\DD)^s + V$ acting on $L^2_{\mathrm{rad}}(\R^N)$, where $V$ satisfies the assumptions (V1) and (V2) in Section \ref{sec:linear}. By assumption, the operator $H$ has at least two negative eigenvalues $\Er_1 < \Er_2 < \inf \sigma_{\mathrm{ess}}(H)$ below the essential spectrum. 

For notational convenience, we let $\psi(r)=\psi_2(r)$ denote the second radial eigenfunction of $H=(-\DD)^s +V$ for the rest of this section. 

Since $H$ is self-adjoint, we have the orthogonality $(\psi, \psi_1)= 0$, where $\psi_1=\psi_1(r) > 0$ is (up to a sign) the unique positive ground state eigenfunction of $H$ (see Lemma \ref{lem:perron} below). Thus $\psi(r)$ has to change its sign at least once on $(0,+\infty)$. On the other hand, by Proposition \ref{prop:osc} above, we conclude that $\psi(r)$ changes its sign exactly once or exactly twice on the half-line $(0, +\infty)$. To rule out the latter possibility, we use a continuation argument for the second radial eigenfunction of a suitable family of self-adjoint operators $\{ H_\kappa \}_{\kappa \in [0,1]}$ such that $H_{\kappa=0} = (-\DD)^s + V$, whereas $H_{\kappa=1}=-\DD + W$ is a classical Schr\"odinger operator with $W$ being some attractive Gaussian potential to ensure that $-\DD+W$ has to at least two negative radial eigenvalues. Before turning to the actual proof of Theorem \ref{thm:linear2}, we work out the preliminaries of this continuation argument first.

\subsection{Continuation of Eigenfunctions} 
Recall that the radial potential $V=V(r)$ satisfies the conditions (V1) and (V2). Without loss of generality, we can assume that $V(r) \to 0$ as $r \to +\infty$ in what follows. Hence, by assumption, the operator $H=(-\DD)^s +V$ acting on $L^2_{\mathrm{rad}}(\R^N)$ has at least two radial simple negative eigenvalues $\Er_1 < \Er_2 < \mathrm{inf} \, \sigma_{\mathrm{ess}}(H) = 0$. Since we shall employ a continuation argument in $s$, it turns out to be convenient to denote 
\be
H_0=(-\DD)^{s_0} + V
\ee
for the operator given in the assumptions of Theorem \ref{thm:linear2}.

For $\kappa \in [0,1]$, we introduce the following family of self-adjoint operators $\{ H_\kappa \}_{\kappa \in [0, 1]}$ acting on $L^2_{\mathrm{rad}}(\R^N)$ given by
\be \label{def:Hfamily}
H_\kappa = \left \{ \begin{array}{ll} H^{(1)}_{3\kappa} & \quad \mbox{for $\kappa \in [0,1/3]$,} \\
H^{(2)}_{3 \kappa-1} & \quad \mbox{for $\kappa \in (1/3,2/3]$,} \\
H^{(3)}_{3 \kappa-2} & \quad \mbox{for $\kappa \in (2/3,1]$} . \end{array} \right .
\ee
Here the sub-families $\{ H^{(i)}_{\tau} \}_{\tau \in [0,1]}$, with $i=1,2,3$, act on $L^2_{\mathrm{rad}}(\R^N)$ and are defined as
\be \label{eq:Hk1}
H^{(1)}_{\tau} = (-\Delta)^{s_0} + V +  \tau W \ \ \mbox{for} \ \ \tau \in [0,1],
\ee
\be \label{eq:Hk2}
H^{(2)}_\tau= (-\Delta)^{s_0} + (1-\tau) V + W \ \ \mbox{for} \ \ \tau \in [0,1],
\ee
\be \label{eq:Hk3}
H_\tau^{(3)} = (-\Delta)^{ (1-\tau) s_0 + \tau} + W \ \ \mbox{for} \ \ \tau \in [0,1].
\ee
Here $W$ denotes the attractive Gaussian potential
\be
W(x) = -g e^{-x^2} ,
\ee
where $g> 0$ is the  universal constant taken from Lemma \ref{lem:bigg} above. Note that $H_{\kappa=0} = H_0$ and $H_{\kappa=1} = -\DD + W$. By Lemma \ref{lem:bigg}, the operator $H_{\kappa} = -\DD + W$ has at least two radial negative eigenvalues. We have the following result.

\begin{lemma} \label{lem:deform}
Let the family $\{ H_\kappa \}_{\kappa \in [0,1]}$ acting on $L^2_{\mathrm{rad}}(\R^N)$ be defined as above. Then, for every $\kappa \in [0,1]$, each $H_\kappa$ has at least two negative radial eigenvalues and the first two radial eigenvalues $\Er_{1,\kappa} < \Er_{2,\kappa} < 0$ are simple.  Furthermore, let $\psi_{\kappa} \in L^2_{\mathrm{rad}}(\R^N)$ with $\| \psi_{\kappa } \|_{L^2}=1$ denote the radial eigenfunction of $H_\kappa$ for the second eigenvalue $\Er_{2,\kappa}$. Then, after possibly changing the sign of $\psi_{\kappa}$, the following properties hold.
\begin{enumerate}
\item[(i)] $\Er_{2, \kappa'} \to \Er_{2,\kappa}$ as $\kappa' \to \kappa$.
\item[(ii)] $\psi_{\kappa'} \to \psi_{\kappa}$ in $L^2 \cap L^\infty_{\mathrm{loc}}$ as $\kappa' \to \kappa$.
\end{enumerate}
\end{lemma}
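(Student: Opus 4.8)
\emph{Overview of the plan.} I would deduce Lemma~\ref{lem:deform} from three ingredients: (a) the simplicity statement of Corollary~\ref{cor:linear1}, which applies verbatim to every member of the family; (b) elementary monotonicity of the associated quadratic forms together with the ($s$-uniform) eigenvalue count behind Lemma~\ref{lem:bigg}, which keeps $\Er_{2,\kappa}$ strictly negative along the whole homotopy; and (c) classical analytic perturbation theory (norm-resolvent continuity and Riesz projections), which yields the convergence statements (i)--(ii). Throughout, all operators act on $L^2_{\mathrm{rad}}(\R^N)$, and $\inf\sigma_{\mathrm{ess}}(H_\kappa)=0$ for every $\kappa$ since the potentials involved are bounded and vanish at infinity (recall we normalised $V(r)\to 0$).

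\emph{Step 1: simplicity.} Every $H_\kappa$ is of the form $(-\DD)^{s}+\widetilde V$ with $s\in[s_0,1]$ and $\widetilde V$ equal to one of $V+\tau W$, $(1-\tau)V+W$, or $W$. Since $W(r)=-g e^{-r^2}$ is smooth with $W'(r)=2gr e^{-r^2}\ge 0$, it is radial, bounded and non-decreasing; combined with (V1)--(V2) this shows that in all three cases $\widetilde V$ is radial, non-decreasing, bounded, and of class $C^{0,\gamma}$ for some $\gamma>\max\{0,1-2s\}$. Hence Corollary~\ref{cor:linear1} applies and gives that \emph{all} radial eigenvalues of $H_\kappa$ are simple. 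In particular, as soon as Step~2 shows that $H_\kappa$ has at least two negative radial eigenvalues, we obtain $\Er_{1,\kappa}<\Er_{2,\kappa}<0$ with both simple.

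\emph{Step 2: at least two negative radial eigenvalues.} First I would record the quantitative fact behind Lemma~\ref{lem:bigg}: the constant $g$ can be fixed so that $(-\DD)^s+W$ has at least two negative radial eigenvalues for \emph{every} $s\in[s_0,1]$. Indeed, picking two linearly independent radial $\phi_1,\phi_2\in C^\infty_0(\R^N)$ supported in $\{|x|<R\}$ and using the $s$-independent bound $\langle u,(-\DD)^s u\rangle\le\|u\|_{H^1}^2$ (valid because $|\xi|^{2s}\le 1+|\xi|^2$), one finds on $M:=\mathrm{span}\{\phi_1,\phi_2\}$ that $\langle u,((-\DD)^s+W)u\rangle\le (C_M-g\,e^{-R^2})\|u\|_{L^2}^2$, which is $<0$ for all $0\ne u\in M$ once $g>C_M e^{R^2}$; the min--max principle in $L^2_{\mathrm{rad}}(\R^N)$ then forces $\Er_2((-\DD)^s+W)<0$, with a bound uniform in $s$. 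Armed with this, I would go along the three legs of the homotopy. On $H^{(1)}_\tau=(-\DD)^{s_0}+V+\tau W$ the form is non-increasing in $\tau$ (as $W\le 0$), so $\Er_j(H^{(1)}_\tau)\le \Er_j(H_0)<0$ for $j=1,2$; on $H^{(2)}_\tau=(-\DD)^{s_0}+(1-\tau)V+W$ the form is non-decreasing in $\tau$ (as $V\le 0$), so $\Er_j(H^{(2)}_\tau)\le \Er_j((-\DD)^{s_0}+W)<0$; and on $H^{(3)}_\tau=(-\DD)^{(1-\tau)s_0+\tau}+W$ we already have $\Er_2(H^{(3)}_\tau)<0$ directly. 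Since the legs agree at the junctions ($H^{(1)}_1=H^{(2)}_0$ and $H^{(2)}_1=H^{(3)}_0$), this proves that every $H_\kappa$ has at least two negative radial eigenvalues and, in fact, that $\Er_{2,\kappa}$ stays bounded away from $0$ uniformly in $\kappa\in[0,1]$. This is the step I expect to demand the most care: the existence of two negative eigenvalues at the two endpoints is given, but the point is the \emph{uniformity in $s$} of the count for $(-\DD)^s+W$ along the third leg (and, relatedly, the guarantee that $\Er_{2,\kappa}$ never rises up to meet the essential spectrum at $0$), which is exactly what the $s$-independent form bound delivers once one keeps the bookkeeping straight across the junctions.

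\emph{Step 3: continuity in $\kappa$.} I would first observe that $\kappa\mapsto H_\kappa$ is continuous in the norm-resolvent sense: on the first two legs the perturbation is multiplication by a bounded potential depending continuously on $\tau$ in $L^\infty$, while on the third leg it suffices to show $\sup_\xi\big|(i-|\xi|^{2s'})^{-1}-(i-|\xi|^{2s})^{-1}\big|\to 0$ as $s'\to s$ (checked by splitting $|\xi|\le 1$, $1\le|\xi|\le R$, $|\xi|\ge R$) and then to add back the bounded potential $W$. Fix $\kappa_0$. By Step~1 the eigenvalue $\Er_{2,\kappa_0}$ is simple and isolated in $\sigma(H_{\kappa_0})$ (below it sits $\Er_{1,\kappa_0}<\Er_{2,\kappa_0}$, above it lies $\sigma_{\mathrm{ess}}=[0,\infty)$ while $\Er_{2,\kappa_0}<0$), so we may take a small circle $\Gamma$ around $\Er_{2,\kappa_0}$ meeting no other point of the spectrum. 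Norm-resolvent convergence makes the Riesz projections $P_\kappa=\frac{1}{2\pi i}\oint_\Gamma(z-H_\kappa)^{-1}\,dz$ converge to $P_{\kappa_0}$ in operator norm, so $\mathrm{rank}\,P_\kappa=1$ for $\kappa$ near $\kappa_0$, and the lone eigenvalue inside $\Gamma$ --- which is $\Er_{2,\kappa}$ by the standard continuity of discrete eigenvalues below $\sigma_{\mathrm{ess}}$ under norm-resolvent convergence --- converges to $\Er_{2,\kappa_0}$; this proves~(i). Setting $\psi_\kappa=P_\kappa\psi_{\kappa_0}/\|P_\kappa\psi_{\kappa_0}\|_{L^2}$ and fixing its sign by $\langle\psi_\kappa,\psi_{\kappa_0}\rangle\ge 0$ gives $\psi_\kappa\to\psi_{\kappa_0}$ in $L^2$. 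To upgrade this to $L^\infty_{\mathrm{loc}}$, I would use that $\psi_\kappa$ solves $(-\DD)^{s_\kappa}\psi_\kappa=(\Er_{2,\kappa}-\widetilde V_\kappa)\psi_\kappa$ with right-hand side bounded in $L^2$ uniformly in $\kappa$, apply the interior regularity estimates of the Appendix (uniform in $\kappa$) to bound $\psi_\kappa$ in $C^{0,\alpha}$ on compact sets uniformly in $\kappa$, and conclude by Arzel\`a--Ascoli, the limit being forced to equal $\psi_{\kappa_0}$ by the $L^2$-convergence already established. This gives~(ii).
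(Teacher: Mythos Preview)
Your proposal is correct and follows essentially the same route as the paper: simplicity via Corollary~\ref{cor:linear1}, existence of two negative eigenvalues by quadratic-form monotonicity along each leg together with Lemma~\ref{lem:bigg}, and continuity via norm-resolvent convergence, Riesz projections, and uniform H\"older bounds. One small omission: Corollary~\ref{cor:linear1} is stated only for $s\in(0,1)$, so at the endpoint $\kappa=1$ (where $s=1$) you should invoke classical ODE for simplicity, as the paper does; likewise, for the uniform $C^{0,\beta}$ bound near $\kappa=1$ the paper bypasses Proposition~\ref{prop:hoeldereigen} by bootstrapping with the Schwartz potential $W$ to get uniform $H^m$ control, which you may want to mention explicitly.
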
 

\begin{remark*} {\em
The previous lemma can be obviously generalized to the first radial eigenvalue $\Er_{1,\kappa}$ and its corresponding eigenfunction $\psi_{1,\kappa}$ (as well as higher eigenvalues and eigenfunctions if present.) However, we shall only need the second eigenfunction/eigenvalue in the proof of Theorem \ref{thm:linear2} below. }
\end{remark*}

\begin{proof}
The proof of Lemma \ref{lem:deform} is provided in Appendix \ref{app:eigencont} below.
\end{proof}

\subsection{Completing the Proof of Theorem \ref{thm:linear2}}
Let $\{ H_{\kappa} \}_{\kappa \in [0,1]}$ denote the family of operators defined above. For notational convenience, we use the following notation
$$
H_{\kappa} = (-\Delta)^{s_\kappa} + V_\kappa .
$$ 
Recall that $H_{\kappa=0} = (-\Delta)^{s} + V$ and $H_{\kappa=1} = -\Delta -g e^{-x^2}$. Let $\psi_\kappa$ denote the second normalized radial eigenfunction of $H_\kappa$ and let $E_\kappa= \Er_{2,\kappa}$ denote the corresponding second radial eigenvalue of $H_\kappa$. We have the following properties.
\begin{itemize}
\item[(i)] For $\kappa \in [0,1)$, the function $\psi_\kappa(r)$ changes its sign exactly once or twice on $(0,+\infty)$.
\item[(ii)] The function $\psi_{\kappa=1}$ changes its sign exactly once on $(0,+\infty)$.
\item[(iii)] $\psi_{\kappa'} \to \psi_{\kappa}$ in $L^2 \cap L^\infty_{\mathrm{loc}}$ as $\kappa' \to \kappa$.
\end{itemize}
Indeed, property (i) follows from observations already made at the beginning of this section. Also, we deduce (ii) from classical ODE arguments, since $\psi_{\kappa=1}(r)$ is the second radial eigenfunction of the Schr\"odinger operator $H_{\kappa=1}=-\Delta - g e^{-x^2}$. Property (iii) is given by Lemma \ref{lem:deform} above.

Suppose now that $\psi(r) = \psi_{\kappa=0}(r)$ changes its sign  exactly twice on $(0,+\infty)$. We define 
\be \label{def:kappastar}
\kappa_* := \sup \big \{ \kappa \in [0,1) : \mbox{$\psi_{\kappa}(r)$ changes its sign exactly twice on $(0,+\infty)$} \big \}. 
\ee  
From properties (i) and (iii), we deduce that if $\psi_{\kappa}(r)$ changes its sign twice, then $\psi_{\kappa+\eps}(r)$ changes its sign twice for $\eps>0$ small. In particular, this shows that $\kappa_* >0$ holds. Furthermore, we conclude  that 
\be
\mbox{$\psi_{\kappa_*}(r)$ changes its sign exactly once.}
\ee
If otherwise $\psi_{\kappa_*}(r)$ changed its sign exactly twice, we would get a contradiction from the previous observation and the definition of $\kappa_*>0$.

Note that $\psi_{\kappa_*} \in L^1$ by Lemma \ref{lem:eigendecay} (i) if $\kappa_* < 1$ and  from standard arguments for classical Schr\"odinger operators if $\kappa_*=1$. Now, we claim that
\be \label{eq:intpsi}
\int_{\R^N} \psi_{\kappa_*} \, dx= 0,
\ee
and 
\be \label{eq:intVpsi}
\int_{\R^N} V_{\kappa_*} \psi_{\kappa_*} \, dx = 0.
\ee
For the moment, let us assume that these identities hold. By combining them, we find
\be \label{eq:contradict}
\int_{\R^N} \left \{ V_{\kappa_*}(|x|) - V_{\kappa_*}(r_*) \right \} \psi_{\kappa_*}(x) \,dx = 0,
\ee
where $r_*>0$ is such that $\psi_{\kappa_*}(r) \geq 0$ for $r \in [0,r_*)$ and $\psi_{\kappa_*}(r) \leq 0$ for $r \in [r_*,+\infty)$ (and $\psi_*$ does not vanish identically on both intervals). But since $V_{\lambda_*}$ is monotone increasing, we obtain a contradiction from \eqref{eq:contradict}. Thus everything is reduced to proving \eqref{eq:intpsi} and \eqref{eq:intVpsi}. 

We begin with the proof of \eqref{eq:intVpsi}. Since $\psi_\kappa(r)$ changes its sign twice in $(0,+\infty)$ for $0 \leq \kappa < \kappa_*$, we can assume that, for some $0 < r_{1,\kappa} < r_{2,\kappa} < +\infty$, 
\be \label{eq:sign1}
\mbox{$\psi_\kappa(r) \geq 0$ on $[0,r_{1,\kappa})$}, \ \ \mbox{$\psi_\kappa(r) \leq 0$ on $[r_{1,\kappa},r_{2,\kappa})$,}  \ \ \mbox{$\psi_\kappa(r) \geq 0$ on $[r_{2,\kappa}, +\infty)$},
\ee
and $\psi_\kappa(r)$ does not vanish identically on each of these intervals. Since $\psi_\kappa \in C^0$ for all $\kappa \in [0,1]$ and $\psi_{\kappa} \to \psi_{\kappa_*}$ in $L^\infty_{\mathrm{loc}}$ as $\kappa \to \kappa_*$ and $\psi_{\kappa_*}(0) \neq 0$ by Theorem 1, we see that $r_{1,\kappa} \not \to 0$ as $\kappa \to \kappa_*$. Since $\psi_{\kappa_*}(r)$ changes its sign exactly once, we conclude that we must have $r_{2, \kappa} \to +\infty$ as $\kappa \to \kappa_*$. Thus, for some $0 < r_{1,\kappa_*} < +\infty$,
\be  \label{eq:sign2}
\mbox{$\psi_{\kappa_*}(r) \geq 0$ on $[0,r_{1, \kappa_*})$}, \ \ \mbox{$\psi_{\kappa_*}(r) \leq 0$ on $[r_{1,\kappa_*}, +\infty)$},
\ee
where $\psi_{\kappa_*}(r)$ does not vanish identically on each of these intervals.

Next we note that $V_\kappa(r) \to 0$ as $r \to +\infty$ and $E_\kappa < 0$. Hence, by Lemma \ref{lem:eigendecay} (ii), the asymptotics  of $\psi_{\kappa}(r)$ for $\kappa \in [0,1)$ (and hence $s_\kappa < 1$) are given by
\be \label{eq:psi_asymp1}
\psi_{\kappa}(r) = - A_{\kappa}  \left (  \int_{\R^N} V_{\kappa} \psi_{\kappa} \, dx \right )  r^{-N-2s_{\kappa}} + o(r^{-N-2s_{\kappa}}) \ \ \mbox{as} \ \ r \to +\infty,
\ee 
with some positive constant $A_\kappa > 0$.  In view of \eqref{eq:psi_asymp1}, we deduce for $0 < \kappa_* \leq 1$ from \eqref{eq:sign1} that 
\be \label{ineq:Vstar}
\int_{\R^N} V_\kappa \psi_{\kappa} \,dx \leq 0 \ \ \mbox{for} \ \ \kappa \in [0, \kappa_*) ,
\ee
and 
\be \label{ineq:Vstar2}
\int_{\R^N} V_{\kappa_*} \psi_{\kappa_*} \,dx \geq 0 ,
\ee
which follows from \eqref{eq:psi_asymp1} and \eqref{eq:sign2} for $\kappa_* < 1$ and from Lemma \ref{lem:asym_luis} for $\kappa_*=1$.
Next, we note that
\be \label{eq:Vconv}
\int_{\R^N} V_{\kappa} \psi_\kappa \to \int_{\R^N} V_{\kappa_*} \psi_{\kappa_*} \ \ \mbox{as} \ \ \kappa \to \kappa_* .
\ee
Assuming this convergence, we conclude from \eqref{ineq:Vstar} and \eqref{ineq:Vstar2} that the claim \eqref{eq:intVpsi} holds. Hence it remains to prove \eqref{eq:Vconv}. We discuss the cases $\kappa_* \leq 2/3$ and $\kappa_* >2/3$ separately as follows. 

First, assume that $\kappa_* \in (0,2/3]$ holds. In this case, we have $s_\kappa = s_{\kappa_*}=s_0$ is constant for all $\kappa \leq \kappa_*$. Moreover, we have that $E_{\kappa} \leq -\lambda < 0$ with some constant $\lambda> 0$, by continuity of $\kappa \mapsto E_\kappa$ and the negativity $E_\kappa <0$. Also, we readily see that $V_{\kappa}(x) + \lambda \geq 0$ for all $|x| \geq R$, where $R \geq 0$ is some constant independent of $\kappa$. Thus we can apply Lemma \ref{lem:eigendecay} to deduce the uniform decay estimate $|\psi_{\kappa}(x)| \lesssim \langle x \rangle^{-N-2s_0}$ for $\kappa \in [0,2/3]$. Since $\psi_\kappa \to \psi_{\kappa_*}$ in $L^\infty_{\mathrm{loc}}$, this uniform decay bound implies that $\psi_\kappa \to \psi_{\kappa_*}$ in $L^1$. By the fact that $V_{\kappa} \to V_{\kappa_*}$ in $L^\infty$, we deduce that \eqref{eq:Vconv} holds, provided that $\kappa_* \leq 2/3$. 

Assume now that $\kappa_* \in (2/3,1]$. Here we simply note that $H_{\kappa} = (-\Delta)^{s_\kappa} + W$ for $\kappa \in (2/3,1]$, where the fixed potential $W=-g e^{-x^2}$ is smooth and  rapidly decaying. Recalling that $\psi_{\kappa} \to \psi_{\kappa_*}$ in $L^2$ and $V_\kappa = W \in L^2$ for $\kappa \in (2/3,1]$, we directly obtain \eqref{eq:Vconv} in this case.

It remains to prove \eqref{eq:intpsi}. Indeed, we integrate the equation for $\psi_{\kappa_*}$ over $\R^N$. This gives us 
\be
\int_{\R^N} (-\Delta)^{s_{\kappa_*}} \psi_{\kappa_*} \, dx + \int_{\R^N} V_{\kappa_*} \psi_* \,dx = E_{ \kappa_*} \int_{\R^N} \psi_{\kappa_*} \, dx.
\ee
Note that $\int_{\R^N} (-\Delta)^{s_{\kappa_*}} \psi_{\kappa_*} \,dx=  0$ holds, which follows from the Fourier inversion formula and the fact that $(-\DD)^{s_{\kappa_*}} \psi_{\kappa_*} \in L^1$. Recalling that $E_{\kappa_*} \neq 0$, we infer from \eqref{eq:intVpsi} that \eqref{eq:intpsi} also holds. This proves \eqref{eq:contradict} and leads to the desired contradiction. 

The proof of Theorem \ref{thm:linear2} is now complete. \hfill $\square$





\section{Nondegeneracy of Ground States}

\label{sec:nondeg}
This section is devoted to the proof of Theorem \ref{thm:nondeg} that establishes the nondegeneracy of ground states $Q \geq 0$ for equation \eqref{eq:Q}. By Proposition \ref{prop:Q},  we can assume that $Q=Q(|x|) > 0$ is radial without loss of generality. This proof of Theorem \ref{thm:nondeg}  will be divided into two main steps as follows. First, we establish the triviality of the kernel of the linearized operator $L_+$ in the space of radial functions. Here the oscillation result of Theorem \ref{thm:linear2} enables us to follow the ideas of \cite{FrLe-10} given for $N=1$ space dimension. Furthermore, to rule out further elements in the kernel of $L_+$ apart from $\partial_{x_i} Q$, with $i=1, \ldots, N$, we decompose the action of $L_+$ using spherical harmonics. In fact, this latter argument is based in spirit on an argument by Weinstein for this nondegeneracy of ground states for NLS in \cite{We-85}. In our setting, we need certain technical adaptations to the fractional Laplacian using heat kernel and Perron--Frobenius arguments, which are worked out in Appendix \ref{app:misc}.

\subsection{Nondegeneracy in the Radial Sector}
First, we show that the restriction of $L_+$ on radial functions has trivial kernel.

\begin{lemma} \label{lem:kernelrad}
We have $(\mathrm{ker} \, L_+) \cap L^2_{\mathrm{rad}}(\R^N) = \{ 0 \}$.
\end{lemma}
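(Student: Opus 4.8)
The plan is to show that any radial function $v \in \ker L_+$ must vanish, by relating $v$ to the structure of the Schr\"odinger operator $H = (-\DD)^s + V$ with $V = 1 - (\alpha+1)Q^\alpha$. First I would observe that $V$ satisfies the hypotheses of Theorem \ref{thm:linear1} and Corollary \ref{cor:linear1}: indeed, since $Q = Q(r) > 0$ is radial and strictly decreasing by Proposition \ref{prop:Q}, the potential $V(r) = 1 - (\alpha+1)Q(r)^\alpha$ is radial and non-decreasing in $r$, and the regularity $Q \in C^\infty$ together with its decay gives $V \in C^{0,\gamma}$ for any $\gamma$, in particular $\gamma > \max\{0, 1-2s\}$. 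Thus $L_+ = H$ restricted to $L^2_{\mathrm{rad}}(\R^N)$ has only simple eigenvalues, and by Theorem \ref{thm:linear1} any radial $L^2$-eigenfunction $u$ of $H$ satisfies $u(0) \neq 0$ (after regularity estimates showing $u \in C^{1,\beta}$).

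The key structural input is that $Q$ itself solves $L_+ Q = -\alpha Q^{\alpha+1}$, and more usefully that the scaling/Pohozaev relations produce an explicit radial function on which $L_+$ acts nicely; the standard fact here is that $\langle Q, L_+ Q\rangle = -\alpha \int Q^{\alpha+2} < 0$, so $L_+$ has a negative direction in the radial sector, which by the Morse index one hypothesis (ground state) means the lowest radial eigenvalue of $L_+$ is strictly negative and is the \emph{only} negative eigenvalue, even among all of $L^2(\R^N)$. Now suppose $v \in (\ker L_+)\cap L^2_{\mathrm{rad}}$ with $v \not\equiv 0$; then $0$ is a radial eigenvalue of $L_+$, necessarily the second radial eigenvalue $\Er_2^{\mathrm{rad}}$ (it cannot be the lowest since that is negative, and there is exactly one negative eigenvalue). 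Applying Theorem \ref{thm:linear2} to $H = L_+$ — whose potential satisfies (V1) and (V2) as checked above, and which has at least two radial eigenvalues $\Er_1^{\mathrm{rad}} < \Er_2^{\mathrm{rad}} = 0 < \inf\sigma_{\mathrm{ess}}(L_+)$ (the essential spectrum starts at $1$ since $Q^\alpha \to 0$) — we conclude that $v$ changes sign exactly once on $(0,+\infty)$.

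The contradiction then comes from pairing $v$ against a cleverly chosen radial comparison function whose sign pattern is incompatible with a once-sign-changing $v$. The natural candidate is obtained from differentiating the equation in the scaling parameter: writing $Q_\lambda(x) = \lambda^{2s/\alpha} Q(\lambda x)$ and setting $\partial_\lambda|_{\lambda=1}$, one gets a radial function $R := \frac{2s}{\alpha}Q + x\cdot\nabla Q$ satisfying $L_+ R = -2s\, Q$ (after using the equation); since $R = \frac{2s}{\alpha}Q + r Q_r$ has $R(0) = \frac{2s}{\alpha}Q(0) > 0$ and $R \to 0$ with $rQ_r < 0$ dominating for large $r$ (so $R < 0$ near infinity), $R$ changes sign, and by integrating $\langle v, L_+ R\rangle = \langle L_+ v, R\rangle = 0$ against $-2s\int Q v$, we get $\int Q v = 0$. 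Combined with the sign information on $v$ and the positivity and monotonicity of $Q$, an argument in the spirit of the proof of Theorem \ref{thm:linear2} (subtract off $Q(r_*) v$ where $r_*$ is the sign-change radius of $v$, using $\int v = 0$ which follows from integrating $L_+ v = 0$ and the Fourier vanishing of $\int (-\DD)^s v$) forces $v \equiv 0$.

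The main obstacle I anticipate is making the pairing arguments rigorous: justifying $\langle v, L_+ R\rangle = \langle L_+ v, R\rangle$ requires $R \in L^2$ or an approximation argument, and $R = \frac{2s}{\alpha}Q + rQ_r$ only decays like $Q$ itself ($\sim |x|^{-N-2s}$), which is in $L^2$ precisely when $N + 2s > N/2$, always true — so this is fine, but the self-adjointness pairing still needs $R$ in the form domain $H^s$, which should follow from $Q \in H^{2s+1}$. The more delicate point is obtaining $\int v \, dx = 0$: this needs $v \in L^1$, which requires a decay estimate $|v(x)| \lesssim \langle x\rangle^{-N-2s}$ for the radial kernel element, presumably available from the appendix's decay lemmas applied to $L_+ v = 0$ with $E = 0 < 1 = \inf\sigma_{\mathrm{ess}}$; and then $\int (-\DD)^s v = 0$ by Fourier inversion as in the proof of Theorem \ref{thm:linear2}. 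Once these integrability facts are in hand, the sign contradiction is the same mechanism as in Section \ref{sec:proofthm2}, so I would cite that argument rather than repeat it.
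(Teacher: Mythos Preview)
Your overall strategy matches the paper's: argue by contradiction, identify $0$ as the second radial eigenvalue of $L_+$ (Morse index one plus $\langle Q, L_+ Q\rangle <0$), apply Theorem~\ref{thm:linear2} to conclude $v$ changes sign exactly once at some $r_*>0$, and then derive a sign contradiction from orthogonality of $v$ to the range of $L_+$. You also correctly identify both key identities $L_+ Q = -\alpha Q^{\alpha+1}$ and $L_+ R = -2s\,Q$.

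There is, however, a concrete gap in your final step. You claim that $\int_{\R^N} v\,dx = 0$ follows from integrating $L_+ v = 0$. It does not: integrating
\[
(-\DD)^s v + v - (\alpha+1)Q^\alpha v = 0
\]
and using $\int (-\DD)^s v = 0$ yields only $\int v = (\alpha+1)\int Q^\alpha v$, and there is no obvious reason the right-hand side vanishes. (The analogous step in the proof of Theorem~\ref{thm:linear2} worked because there one first established $\int V_{\kappa_*}\psi_{\kappa_*}=0$ via a separate limiting argument in $\kappa$; no such mechanism is available here.) So your intended pairing $\int (Q - Q(r_*))\,v = 0$ is not justified.

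The paper closes exactly this gap by using \emph{both} identities you wrote down rather than the $L^1$ detour. From $L_+ Q = -\alpha Q^{\alpha+1}$ and $L_+ R = -2s\,Q$ (with $R\in H^{2s+1}$, so self-adjoint pairing is legitimate) one gets $(v, Q^{\alpha+1}) = 0$ and $(v, Q) = 0$, hence $(v, Q^{\alpha+1} - \mu Q) = 0$ for every $\mu\in\R$. Choosing $\mu_* = Q(r_*)^\alpha$ gives
\[
Q^{\alpha+1}(r) - \mu_* Q(r) = Q(r)\big(Q(r)^\alpha - Q(r_*)^\alpha\big),
\]
which by strict monotonicity of $Q$ is positive on $[0,r_*)$ and negative on $(r_*,\infty)$, matching the sign of $v$. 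Thus the integrand $v(r)\big(Q^{\alpha+1}(r)-\mu_* Q(r)\big)$ is nonnegative and not identically zero, contradicting orthogonality. This argument stays entirely in $L^2$ and avoids the $v\in L^1$ issue you flagged.
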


\begin{proof}
We argue by contradiction. Suppose that there is $v \in L^2_{\mathrm{rad}}(\R^N)$ with $v \not \equiv 0$ such that $L_+ v = 0$ holds. Recall that, by assumption, the Morse index of $L_+$ is one. Hence $0$ must be the second eigenvalue of $L_+$.  From Theorem \ref{thm:linear2} we conclude that (up to changing the sign of $v$) there is some $r_* > 0$ such that
\be \label{eq:rstar}
v(r) \geq 0 \ \ \mbox{for $r \in [0, r_*)$}, \quad v(r) \leq 0 \ \ \mbox{for $r \in [r_*,+\infty)$},
\ee
and $v \not \equiv 0$ on both intervals $[0,r_*)$ and $[r_*, +\infty)$. Now, this fact puts us in the same situation, as if Sturm oscillation theory was applicable to the radial eigenfunctions of $L_+$. Therefore we can follow the strategy of \cite{FrLe-10} based on the nondegeneracy proof for NLS ground states in \cite{ChGuNeTs-08}. First, we note that a calculation shows that
\be \label{eq:LplusR}
L_+ Q = -\alpha Q^{\alpha+1} \quad \mbox{and} \quad L_+ R = -2s Q,
\ee
where 
\be
R = \frac{2s}{\alpha} Q + x \cdot \nabla Q.
\ee
Using the decay and regularity estimates for $Q$, it is easy to check that $R \in H^{2s+1}(\R^N)$ and hence $Q$ and $Q^{\alpha+1}$ both belong to $\mathrm{ran} \, L_+$. However, the strict monotonicity of $Q$ together with $v  \perp \mathrm{ran} \, L_+$ leads to a contradiction as follows. For any $\mu \in \R$, we have orthogonality
\be \label{eq:vortho}
(v, Q^{\alpha+1} - \mu Q) = 0 .
\ee
But by choosing now $\mu_* = (Q(r_*))^\alpha$ with $r_* > 0$ from \eqref{eq:rstar}, the fact that $Q(r)>0$ is monotone decreasing implies that $v(r) (Q^{\alpha+1}(r) - \mu_* Q(r)) \geq 0$ (with $\not \equiv 0$) for all $r> 0$. But this contradicts \eqref{eq:vortho} and completes the proof of Lemma \ref{lem:kernelrad}. \end{proof}

\subsection{Nondegeneracy in the Non-Radial Sector}

Since $Q=Q(|x|)$ is a radial function, the operator $L_+=(-\Delta)^s + 1 - (\alpha+1) Q^\alpha$ commutes with rotations on $\R^N$. In what follows, let us assume that $N \geq 2$ holds. (The arguments can be adapted with some modifications to the case $N=1$; see \cite{FrLe-10} for the proof of the nondegeneracy result in the one-dimensional setting.) Using the decomposition in terms of spherical harmonics
\be
L^2(\R^N) = \bigoplus_{\ell \geq 0} \mathcal{H}_\ell ,
\ee
we find that $L_+$ acts invariantly on each subspace
\be 
\mathcal{H}_\ell = L^2(\R_+, r^{N-1} dr ) \otimes \mathcal{Y}_\ell.
\ee
Here $\mathcal{Y}_\ell = \mathrm{span} \, \{ Y_{\ell,m} \}_{m \in M_\ell}$ denotes space of the spherical harmonics of degree $\ell$ in space dimension $N$. Note that the index set $M_\ell$ depends on $\ell$ and $N$. Recall also that $-\Delta_{\mathbb{S}^{N-1}} Y_{\ell,m} = \ell (\ell + N-2) Y_{\ell,M}$.

For each $\ell \in \Lambda$, the action of $L_+$ on the radial factor in $\mathcal{H}_\ell$ is given by
\be
(L_{+,\ell} f)(r) = ( (-\Delta_{\ell})^s f) (r) + f(r) - (\alpha+1) Q^{\alpha}(r) f(r) ,
\ee
for $f \in C_0^\infty(\R_+) \subset L^2(\R_+, r^{N-1} dr)$. Here $(-\Delta_{\ell})^s$ is given by spectral calculus and the known formula 
\be \label{eq:DD_ell}
-\Delta_{\ell} = - \frac{\partial^2}{\partial r^2} - \frac{N-1}{r} \frac{\partial}{\partial r} + \frac{\ell(\ell+N-2)}{r^2} .
\ee
Technically, we consider \eqref{eq:DD_ell} as a self-adjoint operator in $L^2(\R_+)$ defined as the Friedrichs extension of the corresponding differential expression acting on $C_0^\infty(\R_+)$. 

Note that Lemma \ref{lem:kernelrad} above says that $\mathrm{ker} \, L_{+,0} = \{ 0 \}$. We now derive the following result.

\begin{lemma} \label{lem:kernelnonrad}
We have $\mathrm{ker} \, L_{+,1} = \mathrm{span} \, \{ \partial_r Q \}$ and $\mathrm{ker} \, L_{+,\ell} = \{ 0 \}$ for $\ell \geq 2$.
\end{lemma}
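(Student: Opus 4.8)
The plan is to treat the two cases $\ell=1$ and $\ell\ge 2$ in parallel, using the key structural observation that differentiating the ground state equation $(-\DD)^s Q + Q - Q^{\alpha+1} = 0$ with respect to $x_i$ gives $L_+ \partial_{x_i} Q = 0$, and that each $\partial_{x_i} Q$ lives in $\mathcal H_1$ since $Q$ is radial; concretely $\partial_{x_i} Q(x) = Q'(r)\, x_i/r$, so $Q'(r)$ is an explicit element of $\ker L_{+,1}$. The strategy is then to show that this one-dimensional space exhausts $\ker L_{+,1}$, and that $\ker L_{+,\ell}=\{0\}$ for $\ell\ge 2$, by a Perron--Frobenius / ground state argument on the half-line operators $L_{+,\ell}$.

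The main point is a monotonicity-in-$\ell$ comparison. Since $-\DD_\ell \le -\DD_{\ell'}$ as quadratic forms whenever $\ell \le \ell'$ (the centrifugal term $\ell(\ell+N-2)/r^2$ is increasing in $\ell$), spectral calculus gives $(-\DD_\ell)^s \le (-\DD_{\ell'})^s$, hence $L_{+,\ell} \le L_{+,\ell'}$ for $\ell\le\ell'$. Now I would argue that $Q'(r)$ does not change sign on $(0,\infty)$: indeed $Q$ is radial, positive and strictly decreasing by Proposition~\ref{prop:Q}, so $Q'(r) < 0$ for $r>0$. Thus $-Q'$ is a strictly positive zero-energy eigenfunction of $L_{+,1}$. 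By a Perron--Frobenius argument for the operator $L_{+,1}$ (its semigroup $e^{-tL_{+,1}}$ is positivity improving on $L^2(\R_+, r^{N-1}dr)$ — this is exactly the heat-kernel/Perron--Frobenius machinery deferred to Appendix~\ref{app:misc}), a positive eigenfunction must be the ground state, so $0 = \inf\sigma(L_{+,1})$ and this lowest eigenvalue is simple. Simplicity gives $\ker L_{+,1} = \mathrm{span}\{Q'\} = \mathrm{span}\{\partial_r Q\}$. For $\ell\ge 2$, strict monotonicity $L_{+,\ell} > L_{+,1} \ge 0$ (strictness because the centrifugal potential is strictly larger on a set of positive measure) forces $\inf\sigma(L_{+,\ell}) > 0$, so $0$ is not an eigenvalue and $\ker L_{+,\ell} = \{0\}$.

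The steps in order: (i) record $L_{+,1}(\partial_r Q) = 0$ from translation invariance, and note $\partial_r Q = Q' < 0$ on $(0,\infty)$ using the strict monotonicity of the ground state from Proposition~\ref{prop:Q}; (ii) establish that $e^{-tL_{+,\ell}}$ is positivity improving, via the subordination formula expressing $e^{-t(-\DD_\ell)^s}$ as an average of $e^{-\tau(-\DD_\ell)}$ against the one-sided stable density, together with positivity of the heat kernel of $-\DD_\ell$ on the half-line (Friedrichs extension with the centrifugal potential) — this is the content imported from Appendix~\ref{app:misc}; (iii) deduce from Perron--Frobenius that the strictly positive function $-Q'$ is the ground state of $L_{+,1}$, hence $\inf\sigma(L_{+,1}) = 0$ is a simple eigenvalue, giving $\ker L_{+,1} = \mathrm{span}\{\partial_r Q\}$; (iv) use the strict form-monotonicity $L_{+,\ell} > L_{+,1}$ for $\ell\ge 2$ to conclude $\inf\sigma(L_{+,\ell}) > 0$, so $\ker L_{+,\ell} = \{0\}$.

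The main obstacle I anticipate is step (ii): justifying that the fractional operators $(-\DD_\ell)^s$ on the half-line with the centrifugal term generate positivity-improving semigroups, and that one may legitimately invoke Perron--Frobenius for $L_{+,\ell}$ given that $Q^\alpha$ is only bounded with the stated decay. One must be careful that $-Q'$ genuinely lies in the form domain (this follows from $R = \tfrac{2s}{\alpha}Q + x\cdot\nabla Q \in H^{2s+1}$, as already noted before \eqref{eq:LplusR}, which controls $\partial_r Q$) and that the eigenvalue $0$ is actually attained rather than merely the infimum of the essential spectrum — but since $Q^\alpha \to 0$ at infinity, $\sigma_{\mathrm{ess}}(L_{+,\ell}) = [1,\infty)$, so $0$ being an eigenvalue is unambiguous. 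A secondary technical point is making the strict inequality $L_{+,\ell} > L_{+,1}$ (for $\ell \ge 2$) strong enough to push the bottom of the spectrum strictly above $0$; this is where one uses that the ground state of $L_{+,1}$ has a strictly positive density, so testing $L_{+,\ell}$ against it yields a strictly positive value, and then a standard argument upgrades this to $\inf \sigma(L_{+,\ell}) > 0$.
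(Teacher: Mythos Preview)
Your proposal is correct and matches the paper's proof essentially step for step: differentiate the equation to get $L_{+,1}Q'=0$, use $Q'<0$ together with the Perron--Frobenius property of $L_{+,1}$ (Lemma~\ref{lem:perron}, proved via subordination exactly as you outline) to identify $0$ as the simple lowest eigenvalue, and then invoke the strict form inequality $L_{+,\ell}>L_{+,1}$ for $\ell\ge 2$ (Lemma~\ref{lem:ordering}) to exclude $0$ from the spectrum of $L_{+,\ell}$. Your anticipated obstacles are all handled in the appendix in the manner you describe.
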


\begin{proof}
By differentiating equation \eqref{eq:Q}, we see that $L_+ \partial_{x_i} Q = 0$ for $i=1, \ldots, N$. Since $\partial_{x_i} Q = Q'(r) \frac{x_i}{r} \in \mathcal{H}_{\ell = 1}$, we deduce that $L_{+,1} Q' = 0$. Because of $Q'(r) < 0$ by Proposition \ref{prop:Q} and the Perron-Frobenius property of $L_{+,1}$ (see Lemma \ref{lem:perron}), we deduce that $0$ is the lowest and hence nondegenerate eigenvalue of $L_{+,1}$. Thus we conclude that $\mathrm{ker} \, L_{+,1} = \mathrm{span} \, \{ \partial_r Q \}$.

Finally, by Lemma \ref{lem:ordering}, we have the strict inequality $L_{+, \ell} > L_{+,1}$ in the sense of quadratic forms for any $\ell \geq 2$. Since $0$ is the lowest eigenvalue of $L_{+,1}$, this implies that $0$ cannot be an eigenvalue of $L_{+, \ell}$ for $\ell \geq 2$, which completes the proof of Lemma \ref{lem:kernelnonrad}. \end{proof}

\subsection{Completing the Proof of Theorem \ref{thm:nondeg}}
Let $\xi \in L^2(\R^N)$ satisfy $L_+ \xi = 0$. By Lemma \ref{lem:kernelrad} and \ref{lem:kernelnonrad}, we conclude that $\xi \in \mathcal{H}_{\ell=1}$ and that $\xi$ must be a linear combination of $\partial_{x_1} Q, \ldots, \partial_{x_n} Q$. \hfill $\square$

\section{Uniqueness of Ground States}

\label{sec:unique}

In this section, we give the proof of Theorem \ref{thm:unique}. Thanks to the nondegeneracy result of Theorem \ref{thm:nondeg}, we can now closely follow the strategy developed in \cite{FrLe-10}, where uniqueness of the ground state $Q=Q(r)> 0$ was shown for $N=1$ space dimension. That is, by starting from a given radial ground state $Q_{s_0}=Q_{s_0}(r)$ with $s_0 \in (0,1)$ given, we construct a branch $s \mapsto Q_s$ of radial ground state solutions to
\be \label{eq:Qflow}
(-\DD)^s Q_s + Q_s - |Q_s|^{\alpha} Q_s = 0 \ \ \mbox{in} \ \ \R^N.
\ee
The local existence and uniqueness of $Q_s$ for $s$ close to $s_0$ follows from an implicit function argument, based on the invertibility of the linearized operator $L_+$ around $Q_{s_0}$ in the radial sector. Then, by deriving a-priori bounds from above and below for $Q_s$ we will be able to extend the branch to $s =1$, linking the problem to the classical case, where uniqueness and nondegeneracy of $Q_{s=1}$ is well-known; see \cite{Kw-89,ChGuNeTs-08}. Finally, we show the uniqueness of the branch $Q_s$ starting from the ground state $Q_{s_0}$, which establishes the uniqueness result of $Q_{s_0}$.    

For the reader's convenience, the following presentation will be mostly self-contained. In contrast to \cite{FrLe-10}, the flow argument in $s$ will be carried out with fixed Lagrange multiplier in \eqref{eq:Qflow}. That is, the zeroth order term in \eqref{eq:Qflow} is $Q_s$ instead of $\lambda_s Q_s$ as in \cite{FrLe-10} with some function $\lambda_s$ depending on $s$. In fact, the approach with $\lambda_s \equiv 1$ taken here will turn out to be advantageous due to two reasons: First, the derivation of a-priori bounds will become more transparent. Second, the generalization of the flow argument for more general nonlinearities than pure-power nonlinearities will be more straightforward (to be addressed in future work).

\subsection{Construction of the Local Branch}
We start with some preliminaries. Let $N \geq 1$, $s \in (0,1)$, and $0 < \alpha < \alphcr(s_0,N)$ be fixed throughout the following. We define the real Banach space
\be
X^{\alpha} := \big \{ f \in L^2(\R^N) \cap L^{\alpha+2}(\R^N) : \mbox{$f$ is radial and real-valued} \big \} ,
\ee  
equipped with the norm 
\be
\| f \|_{X^\alpha} := \|f \|_{L^2} + \| f \|_{L^{\alpha+2}} .
\ee
For $s \in [s_0,1)$, we consider $Q \in X^\alpha$ that solve in the sense of distributions the equation
\be \label{eq:Qbranch}
(-\Delta)^s Q +  Q - |Q|^{\alpha} Q = 0 \quad \mbox{in $\R^N$}.
\ee
By a bootstrap argument, it is easy to see that indeed $Q \in H^{2s+1}(\R^N)$ holds (see, e.\,g., \cite[Lemma B.2]{FrLe-10} for $N=1$; for $N \geq 2$, the modifications of the proof given there are straightforward.) Nevertheless, we prefer to discuss solutions $Q$ in $X^\alpha$, since the space will be a natural $s$-independent choice when we below construct a local branch $Q_s$ parametrized by $s \in [s_0,1)$. Note also that, at this point, we do not assume that $Q \in X^\alpha$ is necessarily a positive solution of \eqref{eq:Qbranch}.  

\begin{prop} \label{prop:localQ}
Let $N \geq 1$, $0 < s_0 < 1$, and $0 < \alpha < \alphcr(s_0,N)$. Suppose that $Q_0 \in X^\alpha$ solves equation \eqref{eq:Qbranch} with $s=s_0$. Moreover, assume that the linearization $L_+ = (-\Delta)^{s_0} + 1 - (\alpha+1) |Q_0|^{\alpha}$ has trivial kernel on $L^2_{\mathrm{rad}}(\R^N)$.

Then, for some $\delta > 0$, there exists a map $Q \in C^1(I; X^\alpha)$ defined on the interval $I=[s_0, s_0 + \delta)$ such that the following holds, where we denote $Q_s=Q(s)$  in the sequel.
\begin{enumerate}
\item[(i)] $Q_s$ solves \eqref{eq:Qbranch} for all $s \in I$.
\item[(ii)] There exists $\eps > 0$ such that $Q_s$ is the unique solution of \eqref{eq:Qbranch} for $s \in I$ in the neighborhood $\{ Q \in X^\alpha : \| Q-Q_0 \|_{X^\alpha} < \eps \}$. In particular, we have that $Q_{s=0} = Q_0$ holds.
\end{enumerate}
\end{prop}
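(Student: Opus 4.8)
The plan is to apply the implicit function theorem in Banach spaces to a suitably chosen map whose zero set encodes solutions of \eqref{eq:Qbranch}. First I would define $F : I_0 \times X^\alpha \to Y$ by $F(s,Q) = (-\Delta)^s Q + Q - |Q|^\alpha Q$, where $I_0$ is a small interval around $s_0$ and $Y$ is an appropriate target space — the natural candidate is $Y = L^2(\R^N) \cap L^{(\alpha+2)/(\alpha+1)}(\R^N)$ in the radial sector, since $|Q|^\alpha Q$ maps $L^{\alpha+2}$ into $L^{(\alpha+2)/(\alpha+1)}$ by Hölder, and $(-\Delta)^s$ maps $H^{2s} \supset$ (the relevant interpolation space) into $L^2$; one checks, using the mapping properties of $(-\Delta)^s$ recalled in the preliminaries and the fact that solutions automatically lie in $H^{2s+1}$, that $F$ is well-defined and of class $C^1$ jointly in $(s,Q)$. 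The key continuity-in-$s$ point is that $s \mapsto (-\Delta)^s Q$ is continuously differentiable as a map into $Y$ for fixed sufficiently regular $Q$; this follows from the Fourier multiplier representation $\widehat{(-\Delta)^s Q}(\xi) = |\xi|^{2s}\widehat Q(\xi)$ and differentiating the multiplier in $s$, which produces the bounded-on-dyadic-pieces factor $\log|\xi|$ controlled by the extra regularity of $Q$.

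Next I would compute the partial Fréchet derivative $D_Q F(s_0, Q_0) = (-\Delta)^{s_0} + 1 - (\alpha+1)|Q_0|^\alpha = L_+$, viewed as a bounded operator from $X^\alpha$ (or rather its natural subspace) to $Y$. By hypothesis $L_+$ has trivial kernel on $L^2_{\mathrm{rad}}(\R^N)$. To invoke the implicit function theorem I need $L_+$ to be an isomorphism onto $Y$; since $(\alpha+1)|Q_0|^\alpha$ is a relatively compact perturbation of $(-\Delta)^{s_0}+1$ (it decays like $|x|^{-\alpha(N+2s_0)}$ by the decay bounds in Proposition \ref{prop:Q} and is bounded), $L_+$ is Fredholm of index zero on the radial sector, so triviality of the kernel gives surjectivity and boundedly invertibility. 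Hence $D_Q F(s_0, Q_0)$ is invertible, and the implicit function theorem yields $\delta > 0$ and a $C^1$ map $s \mapsto Q_s$ on $I = [s_0, s_0+\delta)$ with $Q_{s_0} = Q_0$ and $F(s, Q_s) = 0$, i.e. $Q_s$ solves \eqref{eq:Qbranch}; this gives (i), and the local uniqueness clause of the implicit function theorem gives the $\eps$-neighborhood uniqueness statement in (ii). (The notational slip "$Q_{s=0}=Q_0$" in the statement should read $Q_{s=s_0}=Q_0$.)

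The main obstacle I anticipate is not the abstract implicit function theorem but verifying its hypotheses cleanly in the $s$-dependent functional-analytic setting: one must pin down a single $s$-independent Banach space $X^\alpha$ on which $F(s,\cdot)$ is simultaneously $C^1$ for all $s$ near $s_0$ and on which $L_+$ acts as a Fredholm operator, and one must establish the joint $C^1$ regularity of $(s,Q) \mapsto (-\Delta)^s Q$ — this is where the elementary but slightly delicate estimates on the multiplier $|\xi|^{2s}$ and its $s$-derivative $|\xi|^{2s}\log|\xi|$ enter, requiring that elements in the domain of $F$ have enough Sobolev regularity (which they do, being in $H^{2s+1}$) to absorb the logarithmic loss. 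Once these mapping properties are in place, the conclusion is immediate. I would also remark that elliptic regularity / bootstrap arguments (as cited after \eqref{eq:Qbranch}) guarantee $Q_s \in H^{2s+1}(\R^N)$, so the branch consists of genuine classical-type solutions, which is needed for the a priori estimates in the subsequent subsections.
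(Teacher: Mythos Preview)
Your overall strategy---implicit function theorem in Banach spaces, with the linearization $L_+$ playing the role of the invertible Fr\'echet derivative---is exactly what the paper does. However, your choice of map $F(s,Q) = (-\Delta)^s Q + Q - |Q|^\alpha Q$ runs squarely into the obstacle you yourself identify at the end: for $Q \in X^\alpha = L^2 \cap L^{\alpha+2}$ there is no reason for $(-\Delta)^s Q$ to lie in $L^2$, so $F$ is not even defined on $X^\alpha$. Appealing to the a posteriori regularity $Q_s \in H^{2s+1}$ is circular, since the implicit function theorem requires $F$ to be $C^1$ on an open set of a fixed Banach space \emph{before} you know anything about solutions. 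If you try to fix this by working in an $s$-dependent domain like $H^{2s}$, you lose the single-space setting needed for the implicit function theorem; if you pick an $s$-independent domain like $H^{2s_0+\delta}$, you then have to show $L_+$ is an isomorphism between mismatched spaces, which is awkward.

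The paper sidesteps all of this by writing the equation in resolvent form,
\[
F(Q,s) = Q - \big((-\Delta)^s + 1\big)^{-1} |Q|^{\alpha} Q,
\]
which maps $X^\alpha \to X^\alpha$. Now the domain and codomain coincide and are manifestly $s$-independent; the resolvent $((-\Delta)^s+1)^{-1}$ is smoothing and its $s$-dependence is easy to control; and the Fr\'echet derivative is $\partial_Q F = 1 + K$ with $K = -((-\Delta)^{s_0}+1)^{-1}(\alpha+1)|Q_0|^\alpha$ compact on $L^2_{\mathrm{rad}}$. The hypothesis that $L_+$ has trivial kernel on $L^2_{\mathrm{rad}}$ translates to $-1 \notin \sigma(K)$, so $1+K$ is invertible by the Fredholm alternative, and one checks this invertibility persists on $X^\alpha$. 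This formulation makes the ``main obstacle'' you anticipate disappear entirely, and is worth internalizing as a standard trick: when the principal part depends on a parameter, move it into a resolvent so the parameter-dependence becomes a bounded-operator perturbation rather than a domain issue.
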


\begin{remarks*} {\em
1.) By standard arguments, the operator $L_+$ has a bounded inverse on $X^\alpha$ if $L_+$ has trivial kernel on $L^2_{\mathrm{rad}}(\R^N)$. 

2.) We could also construct a local branch $Q_s$ for $s \in (s_0-\tilde{\delta}, s_0]$ with some $\tilde{\delta} > 0$ small. But since we are ultimately interested in extending the branch $Q_s$ to $s=1$, we shall only consider the case $s \geq s_0$.

3.) Recall that, in contrast to \cite{FrLe-10}, we do not introduce a Lagrange multiplier function $\lambda_s$ depending on $s$ in \eqref{eq:Qbranch}. 
}
\end{remarks*}

\begin{proof}
As in \cite{FrLe-10}, we use an implicit function argument for the map
\be
F : X^\alpha \times [s_0, s_0 +\delta) \to X^\alpha, \quad F(Q,s) = Q - \frac{1}{(-\Delta)^s +1} |Q|^{\alpha} Q .
\ee
Clearly, we have that $F(Q, s) = 0$ if and only if $Q \in X^\alpha$ solves \eqref{eq:Qbranch} and, by assumption, we have $F(Q_0,s_0)=0$. Moreover, following the arguments in \cite{FrLe-10}, we can show that $F$ is well-defined map  of class $C^1$. Next, we consider the Fr\'echet derivative 
\be
\partial_Q F(Q_0,s_0) = 1 + K, \quad \mbox{with $\displaystyle K=- \frac{1}{(-\Delta)^{s_0} + 1} (\alpha+1) |Q_0|^{\alpha}$}.
\ee
Note that the operator $K$ is compact on $L_{\mathrm{rad}}(\R^N)$ and we have that $-1 \not \in \mathrm{\sigma}(K)$, which follows from the fact that $0$ is not an eigenvalue of $L_+ = (-\Delta)^{s_0}+1 - (\alpha+1) |Q_0|^\alpha$. Moreover, we check that $K$ maps $X^\alpha$ to $X^\alpha$ and that the bounded inverse $(1+K)^{-1}$ exists on $X^\alpha$. Hence the Fr\'echet derivative $\partial_Q F$ has a bounded inverse on $X^\alpha$ at $(Q_0,s_0)$. By the implicit function theorem, we deduce that the assertions (i) and (ii) of Proposition \ref{prop:localQ} hold for some $\delta > 0$ and $\eps >0$ sufficiently small. \end{proof}

\subsection{A-Priori Bounds and Global Continuation}

We now turn to the global extension of the locally unique branch $Q_s$, with $s \in [s_0,s_0+\delta)$, constructed in Proposition \ref{prop:localQ} above. Again, we follow the general strategy derived in \cite{FrLe-10} for the $N=1$ case. However, since we work with fixed Lagrange multiplier in \eqref{eq:Qflow}, the arguments below will differ from those in \cite{FrLe-10}.

For the following discussion, we first recall that $N \geq 1$, $0 < s_0 < 1$, and $0 < \alpha < \alphcr(s_0,N)$ are fixed. Furthermore, we suppose that $Q_0 \in X^{\alpha}$ is fixed and satisfies the assumptions of Proposition \ref{prop:localQ}. Correspondingly, let denote $Q_s \in C^1(I; X^\alpha)$ with $I = [s_0, s_0 + \delta)$ the unique local branch given by Proposition \ref{prop:localQ}. We consider the {\em maximal extension} of the branch $Q_s$ with $s \in [s_0, s_*)$, where $s_*$ is given by
\begin{align}
s_*  := \sup \big \{ s_0 < \tilde{s} < 1  & :  \mbox{$Q_s \in C^1([s_0, \tilde{s}); X^\alpha)$ and $Q_{s}$ satisfies the assumptions }  \\
& \mbox{of Proposition \ref{prop:localQ} for $s \in [s_0,\tilde{s})$} \big \} . \nonumber
\end{align}
Our ultimate goal will be to show that $s_*=1$ holds if $Q_{s_0} =Q_0$ is a ground state solution of \eqref{eq:Qbranch}. To do this, we derive a-priori bounds along the maximal branch $Q_s$ with $s \in [s_0,s_*)$. Before we proceed, we introduce some shorthand notation for the rest of this section.

\begin{convention*}
We write $X \lesssim Y$ to denote that $X \leq CY$ with some constant $C > 0$ that only depends on $N, \alpha, s_0,$ and $Q_0$. As usual, the constant $C> 0$ may change from line to line. Furthermore, we use $X \sim Y$ to denote that both $X \lesssim Y$ and $Y \lesssim X$ hold.
\end{convention*}

\noindent
As essential step, we derive the following a-priori bounds. 
\begin{lemma} \label{lem:apriori}
We have the a-priori bounds
$$
\int_{\R^N} | Q_s|^2 \sim \int_{\R^N} | (-\Delta)^{s/2} Q_s |^2 \sim \int_{\R^N} | Q_s |^{\alpha+2}  \sim 1
$$
for all $s \in [s_0,s_*)$. 
\end{lemma}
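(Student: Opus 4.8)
The plan is to establish the three equivalences by combining the two conservation laws naturally attached to equation \eqref{eq:Qbranch}—the variational identity (Pohozaev) and the equation tested against $Q_s$ itself—with the Gagliardo--Nirenberg--Sobolev inequality \eqref{ineq:GN} and the ground-state structure of $Q_0$, propagated along the branch by continuity. First I would record the two scalar identities satisfied by any solution $Q=Q_s\in H^{2s+1}$ of \eqref{eq:Qbranch}. Testing \eqref{eq:Qbranch} against $Q$ gives
\be \label{eq:apriori-test}
\int |(-\Delta)^{s/2} Q_s|^2 + \int |Q_s|^2 = \int |Q_s|^{\alpha+2},
\ee
and the Pohozaev identity (obtained by testing against $x\cdot\nabla Q$, exactly as in the symmetry/Pohozaev discussion already invoked after Proposition \ref{prop:Q}) gives a second relation of the form
\be \label{eq:apriori-poho}
\frac{N-2s}{2}\int |(-\Delta)^{s/2} Q_s|^2 + \frac{N}{2}\int |Q_s|^2 = \frac{N}{\alpha+2}\int |Q_s|^{\alpha+2}.
\ee
Solving the linear system \eqref{eq:apriori-test}--\eqref{eq:apriori-poho} in the three unknowns $\int|(-\Delta)^{s/2}Q_s|^2$, $\int|Q_s|^2$, $\int|Q_s|^{\alpha+2}$ expresses all three as fixed positive multiples of one another, with coefficients depending only on $N,\alpha,s$ (the coefficients are uniformly bounded and bounded away from degeneracy for $s\in[s_0,1]$ since $0<\alpha<\alpha_*(s_0,N)$ keeps $\frac{N\alpha}{4s}<1$). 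This yields the first two "$\sim$" relations in the statement; it remains to bound a single one of the quantities, say $\int|Q_s|^2$, above and below by an $s$-independent constant.

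For the lower bound I would feed the relation $\int |Q_s|^2 \sim \int|Q_s|^{\alpha+2}$ into \eqref{ineq:GN}, together with $\int|(-\Delta)^{s/2}Q_s|^2\sim\int|Q_s|^2$, to obtain
$\int|Q_s|^2 \sim \int|Q_s|^{\alpha+2}\le C_{\mathrm{opt}}(s)\big(\int|Q_s|^2\big)^{\frac{N\alpha}{4s}}\big(\int|Q_s|^2\big)^{\frac{\alpha+2}{2}-\frac{N\alpha}{4s}}= C_{\mathrm{opt}}(s)\big(\int|Q_s|^2\big)^{(\alpha+2)/2}$,
and since $(\alpha+2)/2>1$ and $C_{\mathrm{opt}}(s)$ is uniformly bounded for $s\in[s_0,1]$ (Appendix estimates / continuity of the sharp constant), this forces $\int|Q_s|^2\gtrsim 1$, i.e. $Q_s$ cannot collapse. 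For the upper bound, the cleanest route is to use the ground-state hypothesis: along the maximal branch $Q_s$ satisfies the assumptions of Proposition \ref{prop:localQ}, and in fact retains Morse index one (this is a byproduct of the continuation construction, since the number of negative eigenvalues of $L_{+,s}$ is locally constant as long as $0$ stays out of the spectrum, which is exactly the defining property of $s_*$). Evaluating the Weinstein functional $J$ at $Q_s$ and using that a Morse-index-one critical point of $J$ on the Nehari-type constraint realizes the infimum $C_{\mathrm{opt}}(s)^{-1}$—equivalently, that $\int|(-\Delta)^{s/2}Q_s|^2,\int|Q_s|^2,\int|Q_s|^{\alpha+2}$ are determined, via \eqref{eq:apriori-test}--\eqref{eq:apriori-poho} and the value $J(Q_s)=C_{\mathrm{opt}}(s)^{-1}$, as explicit expressions in $N,\alpha,s$ and $C_{\mathrm{opt}}(s)$—pins all three quantities to uniformly bounded values. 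Combining with the lower bound gives $\int|Q_s|^2\sim 1$ and hence all three "$\sim 1$" statements.

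The main obstacle I anticipate is not the algebra of \eqref{eq:apriori-test}--\eqref{eq:apriori-poho} but justifying that the Morse index (or equivalently the property that $Q_s$ stays an optimizer of $J$, up to rescaling) is indeed preserved all the way along the maximal branch $[s_0,s_*)$, rather than merely near $s_0$; this is where one must invoke the locally-constant-eigenvalue-count argument together with the very definition of $s_*$ (which requires trivial kernel of $L_{+,s}$, hence no eigenvalue crossing zero) and the uniform control of $C_{\mathrm{opt}}(s)$ for $s$ bounded away from the critical threshold. A secondary technical point is verifying the Pohozaev identity \eqref{eq:apriori-poho} rigorously at the regularity $Q_s\in H^{2s+1}$; I would defer this to the Pohozaev computation already used in the proof of Proposition \ref{prop:Q}, which applies verbatim to each $Q_s$ on the branch.
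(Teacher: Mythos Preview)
Your Step 1 (the two identities \eqref{eq:apriori-test}--\eqref{eq:apriori-poho}, the resulting equivalence $M_s\sim T_s\sim V_s$, and the lower bound via \eqref{ineq:GN}) matches the paper exactly.

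The upper bound, however, has a genuine gap that is in fact circular. You assert that ``a Morse-index-one critical point of $J$ \ldots\ realizes the infimum $C_{\mathrm{opt}}(s)^{-1}$'', and then read off $J(Q_s)=C_{\mathrm{opt}}(s)^{-1}$. But this implication is exactly what is \emph{not} known at this stage. The paper only proves (see the remarks after Definition~\ref{def:Qground} and the proof of Corollary~\ref{cor:GN}) that global minimizers of $J$ have Morse index one; the converse --- that every positive solution with Morse index one is a global minimizer of $J$ --- is the content of Theorem~\ref{thm:unique} together with Corollary~\ref{cor:GN}, and Lemma~\ref{lem:apriori} is a key ingredient in the proof of Theorem~\ref{thm:unique}. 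So you cannot invoke $J(Q_s)=C_{\mathrm{opt}}(s)^{-1}$ here. Preservation of the Morse index along the branch is fine (and the paper uses it later, in Lemma~\ref{lem:Qglobal}), but Morse index one by itself does not pin down $J(Q_s)$.

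The paper obtains the upper bound by an entirely different mechanism: a Gronwall argument in $s$. One differentiates $M_s=\int|Q_s|^2$ along the branch, uses that $\dot Q_s$ solves $L_+\dot Q_s=-(-\Delta)^s\log(-\Delta)\,Q_s$, and inverts $L_+$ on $Q_s$ via the explicit identity $L_+ R=-2sQ_s$ with $R=\tfrac{2s}{\alpha}Q_s+x\cdot\nabla Q_s$ (this is \eqref{eq:LplusR}). After a short computation this gives
\[
\frac{d}{ds}M_s \;=\; \frac{1}{2s}\Big(\big(\tfrac{4s}{\alpha}+2s-N\big)\,(Q_s,(-\Delta)^s\log(-\Delta)Q_s)+2T_s\Big),
\]
and a separate regularity step bounds $(Q_s,(-\Delta)^s\log(-\Delta)Q_s)\lesssim V_s^{(\alpha+1)/(\alpha+2)}$ uniformly in $s\in[s_0,s_*)$. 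Combined with $T_s\sim V_s\sim M_s$ and $M_s\gtrsim 1$ this yields $\tfrac{d}{ds}M_s\lesssim M_s$, hence $M_s\lesssim 1$ by Gronwall. That is the missing idea: the upper bound comes from controlling the $s$-derivative of the mass, not from identifying $Q_s$ as a GNS optimizer.
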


\begin{proof}
We divide the proof into the following three steps.

\subsubsection*{Step 1. Lower bounds}
It is convenient to use the following notation
\be
M_s = \int_{\R^N} |Q_s|^2, \quad T_s = \int_{\R^N} |(-\Delta)^{s/2} Q_s|^2, \quad V_s = \int_{\R^N} |Q_s|^{\alpha+2} ,
\ee
for $s \in [s_0,s_*)$.  We first claim that
\begin{equation}
 \label{eq:equiv}
M_s \sim T_s \sim V_s.
\end{equation}
To see this, we integrate equation \eqref{eq:Qbranch} against $Q_s$ and $x\cdot \nabla Q_s$, respectively. (Note that, by some straightforward estimates, we have $x\cdot \nabla Q_s\in H^s$.) This yields
\be \label{eq:poho1}
T_s+M_s=V_s .
\ee
Moreover, by using the fact that $[\nabla\cdot x,(-\Delta)^s]=-2s (-\Delta)^s$, we obtain the Pohozaev-type identity
\be \label{eq:poho2}
\frac{N-2s}{2} T_s + \frac N 2 M_s = \frac{N}{\alpha+2} V_s .
\ee
Combining \eqref{eq:poho1} and \eqref{eq:poho2}, some elementary computations lead to \eqref{eq:equiv}.

Next, from the fractional Gagliardo--Nirenberg--Sobolev inequality \eqref{ineq:GN} we derive that
$$
 \frac{T_s^{\frac{N \alpha }{4s}} M_s^{\frac{\alpha+2}{2}- \frac{N \alpha}{4s}}}{V_s} \gtrsim 1,
$$
which follows from the fact that the optimal constant in \eqref{ineq:GN} can be uniformly bounded with respect to $s \geq s_0$ with $s_0$ fixed; see, e.\,g., Lemma A.4 in \cite{FrLe-10} and its proof for $N=1$ dimension; the adaptation to $N \geq 2$ poses no difficulties. In view of \eqref{eq:equiv}, we see that 
\be
M_s \sim T_s \sim V_s \gtrsim 1,
\ee
 i.\,e., the quantities are uniformly bounded away from zero. Thus it remains to find an upper bound for one of the quantities $M_s$, $T_s$ or $V_s$.

\subsubsection*{Step 2. Regularity} 
We claim that
\begin{equation}
 \label{eq:reg1}
\| (-\DD)^{s-\frac{\alpha N}{4(\alpha+2)}} Q_s \|_{L^2}^2 \lesssim V^{\frac{2(\alpha+1)}{\alpha+2}} \,.
\end{equation}
(This is a regularity statement since $s-\alpha N/4(\alpha+2)>s/2$ for $s \geq s_0$, thanks to the condition $\alpha < \alphcr(s_0,N)$. However, the point here is that the constant here is independent of $s$.) Indeed, from the identity $Q_s = ((-\DD)^s + 1)^{-1} |Q_s|^{\alpha} Q_s$ and Plancherel's identity, we deduce that
\begin{equation} \label{ineq:young1}
\| (-\DD)^t Q_s \|_{L^2} = \left \| \frac{(-\DD)^t}{(-\DD)^s + 1} |Q_s|^{\alpha} Q_s \right \|_{L^2} 
\leq \| (-\DD)^{t-s} ( |Q_s|^{\alpha} Q_s) \|_{L^2}
\end{equation}
for any $t \geq 0$. In particular, the choice
\be \label{eq:t_regular}
t := s - \frac{N \alpha }{4(\alpha+2)}
\ee
satisfies $s > t > s-s_0/2 \geq s_0/2$ thanks to the condition $\alpha < \alphcr(s_0,N) \leq \alphcr(s,N)$ for $s \geq s_0$. For this choice of $t$, the operator $(-\DD)^{t-s}$ on $\R^N$ is given by convolution with $|x|^{-N(\alpha+4)/(2 (\alpha+2))}$ up to a multiplicative constant depending only on $\alpha$ and $N$. Hence, by the weak Young inequality,
\be \label{ineq:tbound}
\| (-\DD)^{t} Q_s \|_{L^2}^2 \lesssim \| |x|^{-\frac{N(\alpha+4)}{2(\alpha+2)} } \ast ( |Q_s|^{\alpha} Q_s ) \|_{L^2}^2 \lesssim  \| |Q_s|^{\alpha+1} \|^2_{L^{\frac{\alpha+2}{\alpha+1}}} = V^{\frac{2(\alpha+1)}{\alpha+2}} \,,
\ee
which is the claimed bound.

\subsubsection*{Step 3. Upper bounds} Recall that is suffices to derive an upper for one of the quantities $M_s$, $T_s$, or $V_s$.
We shall prove a uniform upper bound for $V_s$ as follows. If we differentiate the equation satisifed by $Q_s$ with respect to $s$, we get
$$
L_{+,s} \dot Q_s = - (-\DD)^s \log ( -\DD) Q_s
$$
with $\dot{Q}_s = \frac{d Q_s}{ds}$ and $L_{+,s} = (-\DD)^s - (\alpha+1) |Q_s|^{\alpha}+1$. Using this fact and the identity $L_{+,s} Q_s =-\alpha Q_s^{\alpha +1}$, we find
\be \label{eq:law_Vs}
\frac{d }{ds} V_s = (2+ \alpha) \left ( Q_s^{1+\alpha}, \dot{Q}_s \right ) =  \frac{2 + \alpha}{\alpha}  \left ( Q_s , (-\DD)^s \log (-\DD) Q_s \right ).
\ee
Next, for $t$ defined in \eqref{eq:t_regular} above, we have that
$$
\begin{aligned}
& \left ( Q_s, (-\DD)^s \log(-\DD) Q_s \right )  = 2 \int_{\R^N} |\xi|^{2s} \left ( \log |\xi| \right )  |\widehat{Q}_s(\xi)|^2 \, d \xi \\
& = 2 \int_{|\xi| \leq R}  |\xi|^{2s} \left ( \log |\xi| \right )  |\widehat{Q}_s(\xi)|^2 \, d \xi + 2 \int_{|\xi| > R}  |\xi|^{2s} \left ( \log |\xi| \right )  |\widehat{Q}_s(\xi)|^2 \, d \xi \\
& \leq 2 (\log R) \int_{\R^N} |\xi|^{2s} |\widehat{Q}_s(\xi)|^2 \, d \xi + 2 R^{2s-4t} (\log R) \int_{\R^N} |\xi|^{4t} |\widehat{Q}_s(\xi)|^2 \, d \xi \\
& \leq 2  ( \log R) T_s + 2 R^{2s-4t} (\log R) V_s^{ \frac{2(\alpha+1)}{\alpha+2}} ,
\end{aligned} 
$$
provided that we take $R \geq e^{\frac{1}{4t - 2s}}$, which gives us $|\xi|^{2s-4t} (\log |\xi|) \leq R^{2s-4t} (\log R)$ for $|\xi| \geq R$. Note also that we used \eqref{ineq:tbound} in the last step. Now, we choose $$R^{4t-2s} = c V_s^{\frac{2 (1+\alpha)}{2+\alpha}-1} \geq e=2.71...$$ with some suitable constant $c \sim 1$. This is possible thanks to the uniform lower bound $V_s \gtrsim 1$ and since $\frac{2 (1+\alpha)}{2+\alpha} - 1  >0$. Going back to \eqref{eq:law_Vs} and recalling that $T_s \sim V_s$, we obtain the differential inequality
\be
\frac{d}{ds} V_s \lesssim \left  (1 + \log V_s \right ) V_s,
\ee
which by integration yields the uniform upper bound $V_s \lesssim e^{e^s} \lesssim 1$ for $s \in [s_0, s_*)$. 
The proof of Lemma \ref{lem:apriori} is now complete.
 \end{proof}

\begin{lemma} \label{lem:Quniform}
Suppose that $Q_{s_0}(|x|) > 0$ is positive. Then, for all $s \in [s_0,s_*)$, we have
$$
Q_s(|x|) > 0 \ \ \mbox{for} \ \ x \in \R^N, \quad Q_s(|x|) \lesssim |x|^{-N} \ \ \mbox{for} \ \ |x| \gtrsim 1.
$$
\end{lemma}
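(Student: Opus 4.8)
There are two independent assertions: strict positivity of $Q_s$ for every $s\in[s_0,s_*)$, and the uniform decay $Q_s(|x|)\lesssim|x|^{-N}$ for $|x|\gtrsim1$. I would prove the first by a connectedness-in-$s$ argument and the second by a resolvent-kernel bootstrap, both resting on the uniform-in-$s$ a priori bounds of Lemma~\ref{lem:apriori} and on the uniform boundedness of the sharp constant in \eqref{ineq:GN} for $s\ge s_0$ (used already in its proof).

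\emph{Positivity.} The plan is to show that $A:=\{s\in[s_0,s_*):Q_s\ge0\}$ is closed, open and nonempty, hence $A=[s_0,s_*)$, and then to upgrade $Q_s\ge0$ to $Q_s>0$. The engine is a quantitative exclusion of small negative parts. Splitting $Q_s=Q_s^+-Q_s^-$ and testing \eqref{eq:Qbranch} against $Q_s^-\in H^s(\R^N)$, one uses the bilinear-form representation of $(-\DD)^s$ together with
\[
\langle(-\DD)^sQ_s^+,Q_s^-\rangle=\frac{c_{N,s}}{2}\iint\frac{\bigl(Q_s^+(x)-Q_s^+(y)\bigr)\bigl(Q_s^-(x)-Q_s^-(y)\bigr)}{|x-y|^{N+2s}}\,dx\,dy\le0
\]
(the integrand is pointwise $\le0$ since $Q_s^+$ and $Q_s^-$ have disjoint supports) to obtain
\[
\|(-\DD)^{s/2}Q_s^-\|_{L^2}^2+\|Q_s^-\|_{L^2}^2\le\|Q_s^-\|_{L^{\alpha+2}}^{\alpha+2}.
\]
Inserting \eqref{ineq:GN} for $Q_s^-$ on the right and using $\alpha<\alphcr(s,N)$ together with the $s$-uniform bound on the sharp GNS constant, one arrives at the dichotomy: either $Q_s^-\equiv0$, or $\|Q_s^-\|_{L^{\alpha+2}}\ge\delta_0$ with $\delta_0>0$ depending only on $N,\alpha,s_0$. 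Since $s\mapsto Q_s$ is continuous from $[s_0,s_*)$ into $X^\alpha\hookrightarrow L^{\alpha+2}$ and $u\mapsto u^-$ is a contraction on $L^{\alpha+2}$, the function $s\mapsto\|Q_s^-\|_{L^{\alpha+2}}$ is continuous; hence $A$ is closed, and, in view of the dichotomy, also open, while $s_0\in A$ because $Q_{s_0}>0$. Therefore $Q_s\ge0$ throughout. Finally $Q_s=((-\DD)^s+1)^{-1}Q_s^{\alpha+1}=\mathcal{G}_s\ast Q_s^{\alpha+1}$, where $\mathcal{G}_s>0$ everywhere on $\R^N$ is the (subordinated, hence strictly positive) resolvent kernel of $(-\DD)^s+1$; since $Q_s^{\alpha+1}\ge0$ and $Q_s^{\alpha+1}\not\equiv0$ (as $\|Q_s\|_{L^2}\gtrsim1$ by Lemma~\ref{lem:apriori}), this forces $Q_s(x)>0$ for every $x\in\R^N$.

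\emph{Decay.} I would start from a uniform algebraic decay $|Q_s(x)|\lesssim\langle x\rangle^{-\mu_0}$ for some $\mu_0>0$: interpolate the improved regularity estimate \eqref{eq:reg1} down to $\dot H^{\sigma}$ for a fixed $\sigma\in(0,N/2)$, apply the radial Strauss--Ni inequality to get decay for $|x|\gtrsim1$, and use the uniform $L^\infty$-bound on $Q_s$ near the origin. Then iterate the pointwise inequality $|Q_s|\le\mathcal{G}_s\ast|Q_s|^{\alpha+1}$, using $\mathcal{G}_s(x)\lesssim\langle x\rangle^{-N-2s}$ uniformly in $s\in[s_0,s_*)$ and $\|\mathcal{G}_s\|_{L^1}=1$: a routine splitting of the convolution integral gives that $|Q_s(x)|\lesssim\langle x\rangle^{-\mu}$ implies $|Q_s(x)|\lesssim\langle x\rangle^{-\min\{(\alpha+1)\mu,\,N+2s\}}$, with the new constant controlled in terms of the old one and of $N,\alpha,s_0$. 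Since $\alpha+1>1$, after a number of steps depending only on $\mu_0,\alpha,N$ the decay exponent exceeds $N$, which yields $|Q_s(x)|\lesssim|x|^{-N}$ for $|x|\gtrsim1$, uniformly in $s$; by positivity this is the claimed bound.

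The point I expect to require the most care is not any single estimate but the uniformity in $s$ all the way up to the endpoint $s_*$, which at this stage may equal $1$: one must know that the sharp GNS constant, the $L^\infty$-bound, the pointwise bound and $L^1$-normalization of $\mathcal{G}_s$, and the number of bootstrap iterations can all be chosen uniformly on $[s_0,s_*)$ --- this is precisely what Lemma~\ref{lem:apriori} and the uniform regularity estimates of the appendix provide. A secondary point is the justification of the sign of the cross term and of the pairing of \eqref{eq:Qbranch} with $Q_s^-$, which is standard given the singular-integral form of $(-\DD)^s$ and the regularity $Q_s\in H^{2s+1}(\R^N)$.
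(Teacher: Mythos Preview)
Your proposal is correct but takes a genuinely different route from the paper on both assertions.

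For positivity, the paper argues spectrally: $Q_s$ is an eigenfunction with eigenvalue $0$ of the linear operator $A_s=(-\DD)^s+1-|Q_s|^{\alpha}$, and at $s=s_0$ it is the (Perron--Frobenius) ground state since $Q_{s_0}>0$. Norm--resolvent continuity $A_{s'}\to A_s$ then forces $Q_s$ to remain the ground state along the whole branch, hence strictly positive. Your dichotomy argument via testing against $Q_s^-$ is a valid and more elementary alternative that avoids the Perron--Frobenius/resolvent machinery entirely; the price is that you must separately check the cross-term sign and the continuity of $s\mapsto\|Q_s^-\|_{L^{\alpha+2}}$, which you do.

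For decay, the paper's argument is much shorter and exploits positivity more strongly. Once $Q_s>0$ is known, Proposition~\ref{prop:Q}(ii) gives that $Q_s(r)$ is strictly decreasing; integrating the equation over $\R^N$ yields $\int Q_s=\int Q_s^{\alpha+1}$, and H\"older together with the a~priori bound $\int Q_s^{\alpha+2}\lesssim 1$ gives the uniform $L^1$ bound $\int Q_s\lesssim 1$. Monotonicity then immediately yields $R^N Q_s(R)\lesssim\int_{|x|\le R}Q_s\le\int Q_s\lesssim 1$. Your resolvent-kernel bootstrap is correct in principle and has the merit of not invoking the monotonicity from Proposition~\ref{prop:Q}, but it is considerably more laborious: the convolution iteration must be tracked carefully when the current decay exponent is below $N$, and the Strauss--Ni step for the initial decay needs separate treatment when $N=1$ (where radial Sobolev embeddings give no pointwise decay rate). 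The paper's route sidesteps all of this by extracting the $L^1$ bound first and letting monotonicity do the rest.
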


\begin{remark*} {\em 
By Proposition \ref{prop:Q}, we have the improved bounds $C_1 \langle x \rangle^{-N-2s} \lesssim_{N,s} Q_s(x) \lesssim_{N,s} C_2 \langle x \rangle^{-N-2s}$. However, the point here is to obtain bounds that are uniform in $s$.}
\end{remark*}

\begin{proof}
The  positivity of $Q_s(|x|) > 0$ for $s \in [s_0, s_*)$, provided that $Q_{s_0}(|x|) > 0$ initially holds, can be inferred from spectral arguments. That is, we note that $Q_{s}$ is the ground state of the linear operator $A_s = (-\DD)^{s} + 1 - |Q|^{\alpha}$. For $s=s_0$, this follows from the assumption that $Q_{s_0}(x)> 0$ holds and that $A_s$ enjoys a Perron--Frobenius property; see Lemma \ref{lem:perron}. Since $A_{s'} \to A_s$ as $s' \to s$ in the norm resolvent sense, we deduce that $Q_s$ is the ground state eigenfunction of $A_s$ for $s \in [s_0, s_*)$. Hence we deduce that $Q_s(x) > 0$ for $s \in [s_0, s_*)$. See \cite[Lemma 5.5]{FrLe-10} for details in $N=1$ dimensions, but the generalization to $N \geq 2$ is straightforward.  

Once the positivity of $Q_s(x) > 0$ is established, we now derive the uniform decay bound 
\be \label{ineq:Qsdecay}
Q_s(|x|) \lesssim |x|^{-N} \ \ \mbox{for} \ \ |x| \gtrsim 1.
\ee
First, recall that $Q_s \in L^1$ holds by Proposition \ref{prop:Q}. Integrating equation \eqref{eq:Q} over $\R^N$ and using the fact that $\int_{\R^N} (-\DD)^s Q_s = 0$ holds, we obtain the identity
\be
\int_{\R^N} Q_s  = \int_{\R^N} Q^{\alpha+1}_s .
\ee
By H\"older's inequality, we have that $\int_{\R^N} Q_s^{\alpha+1} \leq \left ( \int_{\R^N} Q_s  \right )^{\frac{1}{1 + \alpha}} \left ( \int_{\R^N} Q_s^{\alpha+2} \right )^{\frac{\alpha}{(1+\alpha)^2}}$. Using the a-priori bound from Lemma \ref{lem:apriori}, we deduce the uniform bound
\be \label{ineq:Qs_L1}
\int_{\R^N} Q_s \lesssim 1.
\ee 
By Proposition \ref{prop:Q}, the function $Q_s(x) > 0$ is decreasing in $|x|$. Hence, for any $R > 0$,
\be
\int_{\R^N} Q_s \geq \int_{|x| \leq R} Q_s(R) \gtrsim R^N Q_s(R) .
\ee 
In view of \eqref{ineq:Qs_L1} we conclude that \eqref{ineq:Qsdecay} holds, completing the proof of Lemma \ref{lem:Quniform}. \end{proof}

We conclude this subsection with the following convergence fact.

\begin{lemma} \label{lem:Qconv}
Let $(s_n)_{n=1}^\infty \subset [s_0,s_*)$ be a sequence such that $s_n \to s_*$ and suppose that $Q_{s_n}(|x|) > 0$ for $n \in \mathbb{N}$. Then, after possibly passing to a subsequence, we have $Q_{s_n} \to Q_*$ in $L^2(\R^N) \cap L^{\alpha+2}(\R^N)$ as $n \to +\infty$. Moreover, the function $Q_*(|x|) > 0$ is positive and satisfies
\be \label{eq:Qstar}
(-\DD)^{s_*} Q_* + Q_*  - Q_*^{\alpha+1} = 0 \ \ \mbox{in} \ \ \R^N.
\ee 
\end{lemma}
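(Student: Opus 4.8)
The plan is to extract a weak limit from the a-priori bounds of Lemma~\ref{lem:apriori}, upgrade it to strong convergence via a nonvanishing/concentration argument, and then verify that the limit is a positive solution of the limiting equation. First I would observe that Lemma~\ref{lem:apriori} gives $\|Q_{s_n}\|_{L^2}^2 \sim \|(-\Delta)^{s_n/2} Q_{s_n}\|_{L^2}^2 \sim \|Q_{s_n}\|_{L^{\alpha+2}}^{\alpha+2} \sim 1$ uniformly in $n$. Since $s_n \to s_*$ with $s_* \leq 1$ and all $s_n \geq s_0 > 0$, the $H^{s_0}$-norms of $Q_{s_n}$ are uniformly bounded; hence, after passing to a subsequence, $Q_{s_n} \rightharpoonup Q_*$ weakly in $H^{s_0}(\R^N)$ and in $L^{\alpha+2}(\R^N)$, and $Q_{s_n} \to Q_*$ strongly in $L^p_{\mathrm{loc}}$ for $2 \leq p < \alpha+2$ (and for all $p \in [2, 2^*_{s_0})$ by Rellich-type compactness). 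Because each $Q_{s_n}$ is radial and, by Lemma~\ref{lem:Quniform}, satisfies the uniform pointwise decay $Q_{s_n}(|x|) \lesssim |x|^{-N}$ for $|x| \gtrsim 1$, the usual obstruction to compactness for radial functions (mass escaping to infinity) is controlled: the tails $\int_{|x| > R} |Q_{s_n}|^{\alpha+2}$ are uniformly small for $R$ large, so in fact $Q_{s_n} \to Q_*$ strongly in $L^{\alpha+2}(\R^N)$, and then interpolating with the uniform $H^{s_0}$-bound gives strong convergence in $L^2(\R^N)$ as well. This already yields $Q_* \not\equiv 0$, since $\|Q_*\|_{L^{\alpha+2}}^{\alpha+2} = \lim_n \|Q_{s_n}\|_{L^{\alpha+2}}^{\alpha+2} \gtrsim 1$.

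Next I would pass to the limit in the equation. Writing $Q_{s_n} = ((-\Delta)^{s_n} + 1)^{-1}(|Q_{s_n}|^{\alpha} Q_{s_n})$ and testing against $\varphi \in C^\infty_0(\R^N)$, the nonlinear term $|Q_{s_n}|^\alpha Q_{s_n} \to |Q_*|^\alpha Q_*$ in $L^{\frac{\alpha+2}{\alpha+1}}(\R^N)$ by strong $L^{\alpha+2}$-convergence, while on the Fourier side $|\xi|^{2 s_n} \to |\xi|^{2 s_*}$ pointwise and is dominated on compact sets; combined with $\widehat{Q_{s_n}} \to \widehat{Q_*}$ (from $L^2$-convergence) this gives $\langle (-\Delta)^{s_n} Q_{s_n}, \varphi\rangle \to \langle (-\Delta)^{s_*} Q_*, \varphi\rangle$. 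Hence $Q_*$ solves \eqref{eq:Qstar} in the sense of distributions, and by the bootstrap/regularity argument already invoked for \eqref{eq:Qbranch} it belongs to $H^{2 s_* + 1} \cap C^\infty$. For positivity: each $Q_{s_n}(|x|) > 0$, so the limit satisfies $Q_* \geq 0$; since $Q_* \not\equiv 0$ and $Q_*$ solves \eqref{eq:Qstar}, the strong maximum principle / Perron--Frobenius property for $(-\Delta)^{s_*} + 1$ (Lemma~\ref{lem:perron}) forces $Q_*(x) > 0$ everywhere. Finally, by Proposition~\ref{prop:Q}(ii) applied at exponent $s_*$, $Q_*$ is (after the translation that is automatic here by radiality) radial and strictly decreasing, so $Q_*=Q_*(|x|)>0$ as claimed.

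The main obstacle is the strong $L^{\alpha+2}(\R^N)$-convergence — i.e.\ ruling out loss of mass, both to spatial infinity and (in the $H^{s_0}$-critical-looking estimates) via vanishing. Spatial escape is handled cleanly by the uniform tail decay $Q_{s_n} \lesssim |x|^{-N}$ from Lemma~\ref{lem:Quniform}, which is exactly why that lemma was proved with $s$-uniform constants; the remaining point is that radial $H^{s_0}$-functions embed compactly into $L^{\alpha+2}$ of balls (a Strauss-type lemma, valid for $s_0 > 0$ in the subcritical range $\alpha < \alpha_*(s_0,N)$), so no concentration-compactness machinery is actually needed. A minor technical care is that the exponent $s_n$ varies, so one must ensure the Strauss embedding constant and the Rellich compactness are uniform for $s_n$ in the compact interval $[s_0, s_*]$; this follows from monotonicity of the fractional Sobolev norms in the exponent, given $s_n \geq s_0$.
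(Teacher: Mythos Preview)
Your approach matches the paper's closely: uniform $H^{s_0}$ bounds from Lemma~\ref{lem:apriori}, weak compactness plus the uniform decay from Lemma~\ref{lem:Quniform} to upgrade to strong convergence, then pass to the limit in the equation and deduce strict positivity of $Q_*$ from positivity of the resolvent kernel. There is one mis-step, however: you claim that strong $L^{\alpha+2}$ convergence together with a uniform $H^{s_0}$-bound gives strong $L^2$ convergence ``by interpolation.'' This does not work --- $L^2$ does not sit between $L^{\alpha+2}$ and $H^{s_0}$ in any interpolation scale. The fix is immediate and uses ingredients you already invoked: the same uniform tail bound $Q_{s_n}(|x|)\lesssim |x|^{-N}$ for $|x|\gtrsim 1$ from Lemma~\ref{lem:Quniform} also controls the $L^2$ tails (since $\int_{|x|\geq R} |x|^{-2N}\,dx<\infty$), so strong $L^2$ convergence follows directly from local Rellich compactness plus uniform tail smallness. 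The paper in fact proceeds in the opposite order: first $L^2$ via this tail argument, and then $L^{\alpha+2}$ by H\"older against a uniform $L^p$ bound with some $p>\alpha+2$ coming from the Sobolev embedding $H^{s_0}\hookrightarrow L^p$ (valid because $\alpha<\alpha_*(s_0,N)$).

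One further minor point: for strict positivity of $Q_*$ the relevant fact is the positivity of the kernel of $((-\Delta)^{s_*}+1)^{-1}$ (Lemma~\ref{lem:Greenunif}), which gives $Q_* = ((-\Delta)^{s_*}+1)^{-1} Q_*^{\alpha+1} > 0$ once you know $Q_*\geq 0$ and $Q_*\not\equiv 0$. Lemma~\ref{lem:perron} concerns simplicity and positivity of ground states of $H_\ell$, not positivity of the resolvent, so it is not quite the right reference here.
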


\begin{proof}

Let $Q_n = Q_{s_n}$ in the following. Recall the uniform bound $\| Q_{n} \|_{H^{s_0}} \lesssim 1$ by the a-priori bounds in Lemma \ref{lem:apriori}. Passing to a subsequence if necessary, we have $Q_n \weakto Q_*$ weakly in $H^{s_0}$. Furthermore, by local Rellich compactness, we can assume that $Q_n \to Q_*$ in $L^2_{\mathrm{loc}}$ and pointwise a.\,e.~in $\R^N$. Thanks to the uniform decay bound in Lemma \ref{lem:Quniform}, we can upgrade this to $Q_n\to Q_*$ strongly in $L^2$. Next, the condition $\alpha < \alphcr(s_0,N)$ and Sobolev embeddings ensure that $\|Q_n \|_{L^p} \lesssim 1$ with some $p > \alpha +2$. Thus, by H\"older's inequality, we deduce that $Q_{n} \to Q_*$ in $L^{\alpha+2}(\R^N)$. 

Finally, we note that the limit $Q_*(|x|) > 0$ is a positive function in $L^2(\R^N) \cap L^{\alpha+2}(\R^N)$ and satisfies equation \eqref{eq:Qstar}. Indeed, since $Q_{n}(|x|) > 0$ and $Q_n \to Q_*$ pointwise a.\,e.~on $\R^N$, we deduce that $Q_*(|x|) \geq 0$. Furthermore, thanks to the uniform lower bounds in Lemma \ref{lem:apriori} and $Q_n \to Q_*$ in $L^{2}$, we obtain that $Q_* \not \equiv 0$. Moreover, by passing to the limit in the equation satisfied by $Q_n$, we deduce that \eqref{eq:Qstar} holds. But this shows that $Q_* = ((-\DD)^{s_*} + 1)^{-1} Q^{\alpha+1}_*$. Since the kernel of $((-\Delta)^{s_*} + 1)^{-1}$ is positive (a classical fact for $s_*=1$; for $s_* <1$, see Lemma \ref{lem:Greenunif}), we obtain that $Q_*(|x|) > 0$ as well.  \end{proof}

We conclude this subsection by showing that $s_*=1$ holds, if the branch $Q_s$ starts at a ground state.

\begin{lemma} \label{lem:Qglobal}
Let $Q_{s_0} =Q_{s_0}(|x|)> 0$ be a ground state solution of \eqref{eq:Q}. Then its maximal branch $Q_s$ with $s \in [s_0, s_*)$ extends to $s_*=1$.
\end{lemma}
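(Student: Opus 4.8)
The plan is a standard open–closed (continuity) argument on the maximal interval $[s_0,s_*)$, combined with the a-priori bounds and the nondegeneracy of $L_+$. Suppose for contradiction that $s_* < 1$. I would first use Lemma \ref{lem:Qconv}: choosing any sequence $s_n \nearrow s_*$, after passing to a subsequence, $Q_{s_n} \to Q_*$ in $L^2 \cap L^{\alpha+2}$, where $Q_*(|x|)>0$ solves $(-\DD)^{s_*} Q_* + Q_* - Q_*^{\alpha+1} = 0$ in $\R^N$. By the uniqueness statement (ii) in Proposition \ref{prop:localQ}, this forces $Q_s \to Q_*$ as $s \nearrow s_*$ (not merely along subsequences), so the branch extends continuously to $s=s_*$ with value $Q_*$, i.e.~$Q_{s_*} := Q_*$ and $Q \in C^1([s_0,s_*];X^\alpha)$ after reparametrization.

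The next step is to check that $Q_*$ satisfies the hypotheses of Proposition \ref{prop:localQ} at $s=s_*$, i.e.~that the linearized operator $L_+^* = (-\DD)^{s_*} + 1 - (\alpha+1) Q_*^\alpha$ has trivial kernel on $L^2_{\mathrm{rad}}(\R^N)$. This is where I would invoke the nondegeneracy machinery: $Q_*>0$ is radial and decreasing (Proposition \ref{prop:Q}), and I claim it is a \emph{ground state} in the sense of Definition \ref{def:Qground}, i.e.~$L_+^*$ has Morse index exactly one. The Morse index is preserved along the branch because the negative eigenvalues of $L_+$ depend continuously on $s$ (norm–resolvent continuity, as in Lemma \ref{lem:deform}), each $L_+$ along the branch has Morse index one by hypothesis on $Q_{s_0}$ together with the fact that $(Q_s, L_+ Q_s) = -\alpha\int Q_s^{\alpha+2} < 0$ gives at least one negative eigenvalue, and no eigenvalue can cross $0$ on $[s_0,s_*)$ since the branch is assumed there to satisfy the hypotheses of Proposition \ref{prop:localQ} (trivial kernel). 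The possible subtlety is whether an eigenvalue reaches $0$ exactly at $s=s_*$; but if $\ker L_+^* \cap L^2_{\mathrm{rad}} \neq \{0\}$, then since the Morse index is one, $0$ would be the \emph{second} radial eigenvalue of $L_+^*$, and Theorem \ref{thm:linear2} (applicable because $1-(\alpha+1)Q_*^\alpha$ is radial non-decreasing in $r$ and H\"older — conditions (V1),(V2) — after noting $Q_*^\alpha$ is decreasing) together with the orthogonality argument of Lemma \ref{lem:kernelrad} yields a contradiction. Hence $\ker L_+^* \cap L^2_{\mathrm{rad}} = \{0\}$.

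With trivial kernel at $s=s_*$, Proposition \ref{prop:localQ} applied at $s_*$ produces a local $C^1$ branch on $[s_*, s_* + \delta)$ for some $\delta>0$ which, by the uniqueness clause (ii), agrees with $Q_s$ near $s_*$; this extends the maximal branch strictly beyond $s_*$, contradicting the definition of $s_*$ as a supremum. Therefore $s_* = 1$. The main obstacle in this argument is the second step — ruling out $0 \in \sigma(L_+^*)$ in the radial sector at the endpoint — since a naive continuity argument does not exclude an eigenvalue touching $0$ precisely at $s=s_*$; this is exactly where the sharp oscillation estimate of Theorem \ref{thm:linear2} (the one-sign-change property) is indispensable, via the monotonicity-of-$Q_*$ orthogonality contradiction, and it is the reason the linear results had to be established first.
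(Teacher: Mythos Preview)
Your proposal is correct and follows essentially the same route as the paper: assume $s_*<1$, pass to a limit $Q_*>0$ via Lemma \ref{lem:Qconv}, show $Q_*$ is a ground state (radial Morse index one), invoke nondegeneracy to get a trivial radial kernel for $L_+^*$, and then extend the branch past $s_*$ by Proposition \ref{prop:localQ}, contradicting maximality. The only cosmetic differences are that the paper bounds the Morse index at $s_*$ by citing lower semi-continuity of the Morse index under norm-resolvent convergence $L_{+,s_n}\to L_{+,*}$ (together with $(Q_*,L_{+,*}Q_*)<0$ for the lower bound), and then simply invokes Theorem \ref{thm:nondeg} for the trivial kernel, whereas you unpack that step into Theorem \ref{thm:linear2} plus the orthogonality argument of Lemma \ref{lem:kernelrad}; these are the same argument at different levels of packaging.
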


\begin{proof}
By Lemma \ref{lem:Quniform}, we have $Q_s(|x|) > 0$ for all $s \in [s_0,s_*)$. We consider the linearized operators along the branch 
\be
L_{+,s} = (-\DD)^s + 1 - (\alpha+1) Q_s^{\alpha}.
\ee
By an adaptation of the arguments in \cite{FrLe-10}, we see that the Morse index of $L_{+,s}$ acting on $L^2_{\mathrm{rad}}(\R^N)$ is constant, i.\,e.,
\be
\mathcal{N}_{-, \mathrm{rad}}(L_{+,s}) = 1, \quad \mbox{for $s \in [s_0,s_*)$}.
\ee 
Consider now a sequence $s_n \to s_*$ with $s_n < s_*$ and let $Q_{n} = Q_{s_n}$. By Lemma \ref{lem:Qconv}, we see that $Q_n \to Q_*$ in $L^{2} \cap L^{\alpha+2}$ and $Q_*(|x|)> 0$ satisfies equation \eqref{eq:Qstar}. Suppose now $s_* < 1$ holds. We claim that $Q_*$ is a  ground state solution for \eqref{eq:Qstar}, which would yield a contradiction, since the nondegeneracy result in Theorem \ref{thm:nondeg} would imply that the branch $Q_s$ can be extended beyond $s_*< 1$.

That $Q_*$ is a ground state solution can be directly seen as follows. Let $L_{+,*}$ denote its corresponding linearization 
\be
L_{+,*} = (-\DD)^{s_*} + 1 - (\alpha+1) Q^\alpha_*.
\ee
Following the discussion in \cite{FrLe-10}, we conclude that $L_{+,s_n} \to L_{+,*}$ in the norm resolvent sense. By the lower semi-continuity of the Morse index with respect to this convergence, this shows
\be
\liminf_{n \to \infty} \mathcal{N}_{-, \mathrm{rad}}(L_{+,n}) \geq \mathcal{N}_{-, \mathrm{rad}}(L_{+,*}) .
\ee
On the other hand, if we integrate equation \eqref{eq:Qstar} against $Q_*$, we see that $(Q_*,L_{+,*} Q_*) = - \alpha \int |Q_*|^{\alpha+2} <0$.  Thus, by the min-max principle, we conclude that $L_{+,*}$ acting on $L^2_{\mathrm{rad}}(\R^N)$ has Morse index equal to one. Hence $Q_*(|x|) > 0$ is a ground state solution to \eqref{eq:Qstar} in the sense of Definition \ref{def:Qground}. Therefore $L_*$ has trivial kernel on $L^2_{\mathrm{rad}}(\R^N)$ by Theorem \ref{thm:nondeg}. In particular, the branch $Q_s$ could be extended beyond $s_*$, if $s_* < 1$ was true. Hence $s_*=1$ holds. \end{proof}

\subsection{Completing the Proof of Theorem \ref{thm:unique}}
Let $N \geq 1$, $0 < s_0 < 1$, and $0 < \alpha < \alphcr(s_0,N)$ be fixed. Suppose that $Q_{s_0} = Q_{s_0}(|x|) >0$ and $\tilde{Q}_{s_0} = \tilde{Q}_{s_0}(|x|) > 0$ are two radial positive ground states for equation \eqref{eq:Q} with $s=s_0$ and $Q_{s_0} \not \equiv \tilde{Q}_{s_0}$.

By Theorem \ref{thm:nondeg}, we conclude that $Q_{s_0} \in X^{\alpha}$ and $\tilde{Q}_{s_0} \in X^{\alpha}$ both satisfy the assumptions of Proposition \ref{prop:localQ}.  By Lemma \ref{lem:Qglobal}, both branches extend to $s=1$; i.\,e., we have $Q_s \in C^1([s_0,1); X^\alpha)$ and $\tilde{Q}_s \in C^1([s_0, 1); X^\alpha)$. Moreover, we deduce that $Q_s \not \equiv \tilde{Q}_s$ for all $s \in [s_0, 1)$ from the local uniqueness in Proposition \ref{prop:localQ}. Furthermore, by Lemma \ref{lem:Qconv} and $s_*=1$, we deduce that $Q_{s} \to Q_*$ and $\tilde{Q}_s \to \tilde{Q}_*$ in $L^2 \cap L^{\alpha+2}$ as $s \to 1$ with $s < 1$. However, it known that uniqueness holds for the positive radial solution $Q_*(|x|) >0$ in $L^2 \cap L^{\alpha+2}$ solving
\be
-\Delta Q_* + Q_* - Q_*^{\alpha+1} = 0 \ \ \mbox{in} \ \ \R^N .
\ee
For this result, see Kwong \cite{Kw-89} for $N \geq 2$, whereas for $N=1$ the unique solution $Q_*$ is in fact known in closed form (see, e.\,g., \cite{FrLe-10}). (Note that a direct bootstrap argument shows that $Q \in C^2$ holds and thus the result \cite{Kw-89} is applicable.) Therefore, we have $Q_* \equiv \tilde{Q}_*$ and hence $\| Q_s - \tilde{Q}_s \|_{L^2 \cap L^{\alpha+2}} \to 0$ as $s \to 1$ with $s < 1$. But it is also known that $Q_*$ has nondegenerate linearization $L_* = -\Delta + 1 - (\alpha+1) Q_*^{\alpha}$. In particular, the operator $L_* =-\Delta + 1 -(\alpha+1) Q_*^{\alpha}$ is invertible on $L^2_{\mathrm{rad}}(\R^N)$; see \cite{Kw-89,ChGuNeTs-08}. Thus, by an implicit function argument in the spirit of the proof of Proposition \ref{prop:localQ}, there exists a unique branch $\underline{Q}_s \in C^1((1-\eps,1]; X^\alpha)$ solving \eqref{eq:Qbranch} around $\underline{Q}_{s=1} =Q_*$ with some small $\eps> 0$, which contradicts $Q_s \not \equiv \tilde{Q}_s$ for all $s \in [s_0,1)$. 

The proof of Theorem \ref{thm:unique} is now complete. \hfill $\square$

\begin{appendix}

\section{Continuation of Eigenfunctions}

\label{app:eigencont}

In this section, we prove some technical results needed in the proof of Theorem \ref{thm:linear2}. 

\begin{lemma} \label{lem:bigg}
Let $N \geq 1$. There is some constant $g > 0$ such that, for any $s \in (0,1]$, the operator $H=(-\Delta)^s - g e^{-x^2}$ has at least two negative radial  eigenvalues.
 \end{lemma}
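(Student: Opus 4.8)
The plan is to use a standard variational argument based on the min--max principle: I will exhibit a two-dimensional subspace of radial functions on which the quadratic form of $H$ is negative, with a bound that is \emph{uniform} in $s \in (0,1]$. Recall that $H=(-\DD)^s - g e^{-x^2}$ is defined via KLMN (the potential is bounded, hence infinitesimally form bounded) as a self-adjoint operator on $L^2(\R^N)$ with form domain $H^s(\R^N)$, bounded below, and its form is
$$
q_s(u) = \int_{\R^N} |\xi|^{2s} |\widehat u(\xi)|^2 \, d\xi - g \int_{\R^N} e^{-x^2} |u(x)|^2 \, dx .
$$
Since $e^{-x^2}$ is bounded and vanishes at infinity, multiplication by it is a compact perturbation relative to $(-\DD)^s$, so $\sigma_{\mathrm{ess}}(H) = [0,+\infty)$; the same holds in the radial sector, i.\,e.\ for the restriction $H_{\mathrm{rad}}$ of $H$ to $L^2_{\mathrm{rad}}(\R^N)$, which is well-defined because $e^{-x^2}$ is radial. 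Hence it suffices to find a two-dimensional subspace $M \subset H^s(\R^N) \cap L^2_{\mathrm{rad}}(\R^N)$ with $q_s(u) < 0$ for all $u \in M \setminus \{0\}$, for then the min--max principle yields at least two radial eigenvalues of $H$ strictly below $0$.

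The key is to choose $M$ independent of $s$ and to control the kinetic term uniformly in $s \in (0,1]$ using the elementary inequality $|\xi|^{2s} \leq 1 + |\xi|^2$, valid for all $\xi \in \R^N$ and all $s \in (0,1]$. I would pick two nonzero functions $\phi_1 \in C_c^\infty((1,2))$ and $\phi_2 \in C_c^\infty((3,4))$ and set $u_i(x) = \phi_i(|x|)$ for $i=1,2$; these are radial Schwartz functions with disjoint supports. For $u = c_1 u_1 + c_2 u_2$, write the kinetic term as $c^{*} A_s c$, where $A_s$ is the nonnegative $2\times 2$ matrix with entries $A_s^{ij} = \int_{\R^N} |\xi|^{2s} \widehat u_i(\xi) \overline{\widehat u_j(\xi)}\,d\xi$. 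The inequality above gives $\operatorname{tr} A_s \leq \|u_1\|_{H^1}^2 + \|u_2\|_{H^1}^2 =: K$, so $A_s \leq K\,\mathrm{Id}$, and hence the kinetic part of $q_s(u)$ is at most $K(|c_1|^2+|c_2|^2)$ for every $s \in (0,1]$. On the other hand, because $u_1$ and $u_2$ have disjoint supports the potential part equals $g(\beta_1 |c_1|^2 + \beta_2|c_2|^2)$ with $\beta_i = \int_{\R^N} e^{-x^2}|u_i|^2 > 0$, so it is at least $g\beta(|c_1|^2+|c_2|^2)$ with $\beta = \min\{\beta_1,\beta_2\} > 0$.

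Combining the two estimates gives $q_s(u) \leq (K - g\beta)(|c_1|^2 + |c_2|^2)$ for all $s \in (0,1]$ and all $u \in M = \operatorname{span}\{u_1,u_2\}$. Choosing the universal constant $g > K/\beta$ (it depends only on $N$, through the fixed functions $u_1, u_2$) makes $q_s$ negative definite on $M$ for every $s \in (0,1]$, and the min--max conclusion stated in the previous paragraph then proves the lemma. The only mildly delicate point is the uniformity in $s$ of the kinetic energy, which is precisely what the bound $|\xi|^{2s}\le 1+|\xi|^2$ together with the trace estimate for $A_s$ takes care of; everything else (self-adjointness via KLMN, the identification $\sigma_{\mathrm{ess}}(H)=[0,+\infty)$ by relative compactness, and the reduction to the radial sector) is routine and can be quoted from standard references on fractional Schr\"odinger operators.
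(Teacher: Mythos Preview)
Your proof is correct. The core idea---produce a two-dimensional radial subspace on which the quadratic form is negative definite uniformly in $s$---is the same as the paper's, and the uniformity in $s$ is handled in both cases via the elementary bound $|\xi|^{2s}\le 1+|\xi|^2$ (the paper phrases this as ``the entries of $T^{(s)}$ are uniformly bounded with respect to $s\in(0,1]$'').

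The execution differs in a useful way. The paper takes the first two radial Hermite functions $\psi_1,\psi_2$; because these are orthonormal, the potential matrix $V^{(\eps)}$ with entries $(\psi_j,e^{-\eps x^2}\psi_k)$ equals the identity at $\eps=0$ and hence is positive definite for small $\eps>0$; one then rescales $\eps$ away at the end. You instead choose trial functions with \emph{disjoint supports}, which forces the potential matrix to be diagonal with strictly positive entries from the start, and you control the off-diagonal kinetic terms by the clean observation that a positive semidefinite $2\times2$ matrix is bounded by its trace times the identity. Your route avoids the auxiliary parameter $\eps$ and the final rescaling, at the modest cost of a slightly less explicit choice of test functions. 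Both arguments are short; yours is arguably the more direct of the two.
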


\begin{proof}
The variational principle (see, e.\,g., \cite[Theorm 12.1]{LiLo-97}) says that if  there exist two radial, orthonormal functions $\psi_1,\psi_2\in H^s(\R^N)$ such that the two-by-two matrix
$$
\left( (\psi_j,(-\Delta)^s\psi_k) + (\psi_j, W \psi_k) \right)_{1 \leq j,k \leq 2}
$$
has two negative eigenvalues, then also the operator $(-\Delta)^s+W$ has two negative radial eigenvalues and they are bounded from above by the eigenvalues of the matrix. 

We take the radial functions
$$
\psi_1(x) = \pi^{-1/4} e^{-x^2/2} \,,\qquad \psi_2(x) = \left( N \pi^{N/2} /2 \right)^{-1/2} (x^2 - N/2) e^{-x^2/2} \,.
$$
These two functions are orthonormal. (They are the first two radial eigenfunctions of the harmonic oscillator in $N \geq 1$ dimensions.) Consider the matrices $T^{(s)}=(t_{jk}^{(s)})$ and $V^{(\eps)}=(v_{jk}^{(\eps)})$ with
$$
t^{(s)}_{jk} = (\psi_j,(-\Delta)^s\psi_k) \,,
\qquad
v^{(\eps)}_{jk} = (\psi_j, e^{-\eps x^2} \psi_k)
$$
and note that $t^{(s)}_{jk}= t^{(s)}_{kj}$ and $v^{(\eps)}_{jk}=v^{(\eps)}_{kj}$. By orthonormality, the matrix $V^{(\eps=0)}$ is the identity matrix. Since the matrix $V^{(\eps)}$ is symmetric and depends continuously on $\eps$, for all $\eps>0$ sufficiently small,  both eigenvalues are strictly positive. From now on, let this value of $\eps$ be fixed. We denote the corresponding positive eigenvalues of $V^{(\eps)}$ by $0<\mu_0\leq \mu_1$.

Next, we observe the simple fact that the entries of $T^{(s)}$ are uniformly bounded with respect to $s\in (0,1]$. That is, there is a constant $C>0$ such that $\sup_{1 \leq i,j \leq k} |t^{(s)}_{jk}|\leq C$ for all $s\in (0,1]$. Hence the eigenvalues of $T^{(s)}-\tilde g V$ lie in the intervals $[-\tilde g \mu_j -C, -\tilde g \mu_j +C]$ for any $s\in(0,1]$. Thus both eigenvalues of $T^{(s)} - \tilde g V$ are negative for $\tilde g =2C/\mu_0$.

By the variational principle, we conclude that the operator $(-\Delta)^s -\tilde g e^{-\eps x^2}$ has two negative eigenvalues. By scaling, this means that $(-\Delta)^s - \eps ^{-s}\tilde g e^{-x^2}$ has two  negative eigenvalues. We may assume that $\eps \leq 1$ holds. Then, by the variational principle, the eigenvalues of $(-\Delta)^s - \eps^{-s}\tilde g e^{-x^2}$ are not smaller than those of $(-\Delta)^s - g e^{-x^2}$ with $g = \eps^{-1}\tilde g$. This completes the proof of Lemma \ref{lem:bigg}.
\end{proof}

\begin{lemma} \label{lem:asym_luis}
Let $N \geq 1$ and consider $H=-\DD+ W$ with $W(x)=-g e^{-x^2}$ and $g >0$ as in Lemma \ref{lem:bigg}. Let $\psi(r)$ denote the second radial eigenfunction of $H$ and assume that $\psi(r) \leq 0$ for $r \gg 1$. Then $\int_{\R^N} W \psi \, dx \geq 0$. 
\end{lemma}

\begin{proof}
From classical Sturm oscillation theory we obtain that $\psi(r)$ has exactly one zero at $r_* > 0$, say. By assumption on $\psi(r)$, we have that $\psi(r) > 0$ for $r < r_*$ and $\psi(r) < 0$ for $r > r_*$. Next, by integrating the equation $-\DD \psi + W \psi = E \psi$ over $\R^N$ and using that $\int_{\R^N} \DD \psi \, dx= 0$ (note that $\psi$ decays exponentially by standard arguments), we obtain that
\be \label{eq:luis1}
\int_{\R^N} W \psi \, dx = E \int_{\R^N} \psi \, dx .
\ee
Since $E < 0$, it suffices to show that 
\be \label{eq:luisfinal}
\int_{\R^N} \psi \, dx \leq 0.
\ee
Indeed, let $W_* = W(r_*)$.  Since $W(r)$ is monotone increasing, we have $W(r) \leq W_*$ for $r < r_*$ and $W(r) \geq W_*$ for $r > r_*$. Therefore $W(r) \psi(r) \leq W_* \psi(r)$ both for $r < r_*$ and $r > r_*$. Thus,
\be \label{eq:luis2}
 \int_{\R^N} W \psi \, dx \leq W_* \int_{\R^N} \psi \, dx .
\ee
Now, we claim that $E > W_*$. To show this, we note that $\psi$ satisfies the Dirichlet problem 
$$
-\DD \psi + (W-E) \psi = 0  \quad \mbox{on $\R^N \setminus B_{r_*}$}
$$
with the boundary conditions $\psi = 0$ on $\partial B_{r_*}$ and $\psi \to 0$ as $|x| \to +\infty$. By the maximum principle, we deduce that $\psi \not \equiv 0$  implies that $W(r)-E < 0$ for some $r > r_*$. Recalling that $W(r) \geq W_*$ for $r > r_*$, we conclude that $E > W_*$. Since $E \int_{\R^N} \psi \, dx \leq W_* \int_{\R^N} \psi \, dx$ by combining \eqref{eq:luis1} and \eqref{eq:luis2}, we find that
$$
(E-W_*) \int_{\R^N} \psi \, dx \leq 0.
$$
Since $E-W_* > 0$, we deduce that \eqref{eq:luisfinal} holds.
\end{proof}

\subsection{Proof of Lemma \ref{lem:deform}} First, we show that $H_\kappa$ has two radial and simple eigenvalues $\Er_{1,\kappa} < \Er_{2,\kappa} < 0$. For $s \in (0, 1)$, the simplicity of $\Er_{n,\kappa}$ follows from Theorem \ref{thm:linear2}. For $s=1$, the simplicity of radial eigenvalues of $H=-\Delta + W$ follows from classical ODE arguments. Hence it remains to show the existence of $\Er_{1,\kappa} < \Er_{2,\kappa} <0$. In view of \eqref{def:Hfamily}, it suffices to prove this fact for each of the families $H^{(i)}_\kappa$ with $i=1,2,3$ given in \eqref{eq:Hk1}--\eqref{eq:Hk3}.

Indeed, we note that $H^{(1)}_\kappa \leq H_0 = (-\Delta)^{s_0} + V$ for all $\kappa \in [0,1]$, since we have $W \leq 0$. By assumption on $H_0$ and the min-max principle, we conclude that $H^{(1)}_\kappa$ has at least two radial negative eigenvalues for all $\kappa \in [0,1]$ . Likewise, we see that $H^{(2)}_{\kappa} \leq (-\Delta)^{s_0} + W$ for all $\kappa \in [0,1]$ because of $V \leq 0$. By Lemma \ref{lem:bigg} and the min-max principle, we deduce that $H^{(2)}_{\kappa}$ has at least two radial negative eigenvalues for all $\kappa \in [0,1]$. Finally, we directly see from Lemma \ref{lem:bigg} that $H^{(3)}_{\kappa}$ has at least two radial negative eigenvalues for all $\kappa \in [0,1]$.

Next, we prove the properties (i) and (ii). Let $n \in \{1,2\}$ be fixed. For notational convenience, denote $E_\kappa = \Er_{2,\kappa}$ and $\psi_\kappa= \psi_{2,\kappa}$ in the following. (The proof below identically works for $\Er_{1,\kappa}$ and $\psi_{1,\kappa}$, but we do not need this here.)

First, we remark that property (i) (i.\,e.~continuity of eigenvalues) follows from standard spectral theory, since the self-adjoint operators converge $H_{\kappa'} \to H_{\kappa}$ in the norm resolvent sense as $\kappa' \to \kappa$. That is, for any $z \in \mathbb{C}$ with $\mathrm{Im} \, z \neq 0$, we have
\be \label{eq:normconv}
\left \| (H_{\kappa'} - z)^{-1} -  ( H_{\kappa} - z)^{-1} \right \|_{L^2 \to L^2} \to 0 \quad \mbox{as} \quad \kappa' \to \kappa.
\ee
We omit the straightforward details of the proof of this fact; see \cite{FrLe-10} for $N=1$ dimension.

To show property (ii), let $P_{\kappa} : L^2 \to L^2$ denote the corresponding projections onto the eigenspaces of $H_\kappa$ with discrete eigenvalues $E_{\kappa}$. By Riesz' formula,  we have 
\be \label{eq:Riesz}
P_{\kappa} = \frac{1}{2 \pi i} \oint_{\Gamma_{\kappa}} (H_\kappa - z)^{-1} \, dz ,
\ee
where $\Gamma_{\kappa}$ parameterizes some circle in $\mathbb{C}$ around $E_{\kappa} \in \R$ with radius $r > 0$ sufficiently small such that  $\{ z \in \mathbb{C} : |z - E_{\kappa} | \leq r \} \cap \sigma (H_\kappa) = \{ E_{\kappa} \}$.  From \eqref{eq:Riesz}, \eqref{eq:normconv} and property (i), we can deduce that $\| P_{\kappa'} - P_{\kappa} \|_{L^2 \to L^2} \to 0$ as $\kappa' \to \kappa$. Since $\mathrm{ran} ( P_{\kappa})$ is spanned by $\psi_{\kappa}$, it is easy to see that (after changing the sign of $\psi_{\kappa'}$ if necessary) that the $L^2$-operator convergence of the eigenprojections $P_{\kappa}$ imply  that the normalized eigenfunctions satisfy
\be \label{cv:L2}
\psi_{\kappa'} \to \psi_{\kappa} \ \ \mbox{in $L^2(\R^N)$} \ \ \mbox{as}  \ \ \kappa' \to \kappa.
\ee

Next, we note that $|E_\kappa| \leq C$ by property (i) and that $\| V_\kappa \|_{L^\infty} \leq C$ uniformly in $\kappa \in [0,1]$. Hence, by applying Proposition \ref{prop:hoeldereigen} for $\kappa \in [0,2/3]$ with $s=s_0$, we deduce 
\be \label{eq:Hoelderunif}
\| \psi_\kappa \|_{C^{0,\beta}} \leq C \ \ \mbox{for} \ \ \kappa \in [0,2/3],
\ee 
in the range $\kappa \in [0,2/3]$ with some constant $C>0$ independent of $\kappa$ and any $0 < \beta < 2 s_0$ fixed. For $\kappa \in (2/3,1]$, we recall that $H_\kappa=(-\DD)^{s_\kappa} + W$ with $s_\kappa \in [s_0, 1]$. Since the fixed potential $W \in \mathcal{S}(\R^N)$ belongs to the Schwartz class, we can easily bootstrap the equation $H_\kappa \psi_\kappa = E_{\kappa} \psi_\kappa$ to see that $\| \psi_\kappa \|_{H^m} \lesssim_m 1$ for any $m \geq 0$ and $\kappa \in [2/3,1]$. Choosing some fixed $m > N/2+1$, we deduce from Sobolev embeddings  that the uniform H\"older bound \eqref{eq:Hoelderunif} holds in fact for all $\kappa \in [0,1]$, by changing $C>0$ if necessary. From \eqref{cv:L2} and \eqref{eq:Hoelderunif}, we obtain the convergence
\be \label{cv:Linf}
\psi_{\kappa'} \to \psi_{\kappa} \ \ \mbox{in $L^\infty_{\mathrm{loc}}(\R^N)$} \ \ \mbox{as} \ \ \kappa' \to \kappa.
\ee
The proof of Lemma \ref{lem:deform} is now complete.  \hfill $\square$

\section{Regularity Estimates}

\subsection{H\"older Estimates}
Let $N \geq 1$ and $0 < s <1$ be fixed throughout the following. We consider the linear equation
\be \label{eq:linearapp}
(-\Delta)^s u + Vu = 0 \quad \mbox{in $\R^N$},
\ee
We assume that the potential $V : \R^N \to \R$ satisfies the regularity condition:
\be \label{cond:Vreg}
\mbox{$V \in L^\infty(\R^N)$ if $s > \frac 1 2$ and $V \in C^{0, \gamma}(\R^N)$ if $0 < s  \leq \frac 1 2$ with some $\gamma >1-2s$}. 
\ee
Concerning the linear equation \eqref{eq:linearapp}, we have the following regularity result.

\begin{prop} \label{prop:regu}
Suppose that $V$ satisfies \eqref{cond:Vreg}. If $u \in L^\infty(\R^N)$ solves \eqref{eq:linearapp}, then $u \in C^{1, \beta}(\R^N)$ with some $\beta \in (0,1)$.
\end{prop}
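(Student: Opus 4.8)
The plan is to run a standard bootstrap argument for the fractional Laplacian, using the Caffarelli–Silvestre extension together with known Schauder-type estimates for the degenerate elliptic operator $L_s = \mathrm{div}(t^{1-2s}\nabla\,\cdot\,)$, or alternatively the interior Schauder estimates for $(-\Delta)^s$ directly. First I would rewrite equation \eqref{eq:linearapp} as $(-\Delta)^s u = f$ with $f := -Vu$. Since $u \in L^\infty(\R^N)$ and $V \in L^\infty(\R^N)$ (which is contained in both alternatives of \eqref{cond:Vreg}), we have $f \in L^\infty(\R^N)$ to start with. The regularity theory for $(-\Delta)^s$ (e.g. the estimates of Caffarelli–Silvestre, or Silvestre's regularity paper) then gives $u \in C^{0,\alpha_0}(\R^N)$ for some $\alpha_0 \in (0,1)$, with $\alpha_0$ chosen less than $2s$ and less than $1$; more precisely, if $f \in L^\infty$ then $u \in C^{2s-\epsilon}$ when $2s \le 1$ and $u \in C^{1,2s-1-\epsilon}$ when $2s>1$, so in either case $u$ is at least $C^{0,\alpha_0}$.

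The second step is to feed this improved regularity of $u$ back into $f = -Vu$. When $s > 1/2$, the product of the bounded Hölder function $u$ with the bounded function $V$ is still merely bounded, but the interior estimate already yields $u \in C^{1,2s-1-\epsilon}$ directly from $f \in L^\infty$, so no further iteration is needed in that range. When $0 < s \le 1/2$, condition \eqref{cond:Vreg} gives $V \in C^{0,\gamma}$ with $\gamma > 1-2s$; combining $V \in C^{0,\gamma}$ with $u \in C^{0,\alpha_0}$ shows $f = -Vu \in C^{0,\min\{\gamma,\alpha_0\}}$. Then I would invoke the Hölder–Schauder estimate: if $(-\Delta)^s u = f$ with $f \in C^{0,\delta}$ and $\delta + 2s$ is not an integer, then $u \in C^{2s+\delta}$ (interpreted as $C^{1,2s+\delta-1}$ when $2s+\delta>1$). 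Iterating this a finite number of times — each step raising the Hölder exponent of $f$ until the total $2s + (\text{current exponent})$ exceeds $1$ — one reaches $u \in C^{1,\beta}$ for some $\beta \in (0,1)$. The iteration terminates after finitely many steps because each step gains a fixed positive amount $\min\{\gamma, 2s\} > 1-2s$ (or we are capped by $\gamma$), and once the exponent of $f$ exceeds $1-2s$ the conclusion $u \in C^{1,\beta}$ follows in one more application.

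The main technical obstacle is bookkeeping the borderline exponents: one must avoid the non-admissible cases where $2s + \delta \in \mathbb{Z}$ (by shrinking $\epsilon$ or $\delta$ slightly), and one must check that the gain $\gamma > 1-2s$ in \eqref{cond:Vreg} is exactly what is needed to push $f$ into a Hölder class high enough that $(-\Delta)^{-s}$ lands $u$ in $C^{1,\beta}$ rather than stalling below $C^1$. A second, more routine point is that all these estimates are interior/local, so one localizes with a cutoff and controls the tail contribution of $(-\Delta)^s$ using $u \in L^\infty(\R^N)$; this is where boundedness of $u$ on all of $\R^N$ (not just locally) is used. I would cite the relevant Schauder estimates for $(-\Delta)^s$ from the literature (e.g. Caffarelli–Silvestre, Silvestre, or the extension-based estimates of Fabes–Kenig–Serapioni as used in \cite{CaSi-07,CaSi-10}) rather than reprove them.
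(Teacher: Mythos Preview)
Your proposal is correct and follows essentially the same route as the paper: rewrite the equation as $(-\Delta)^s u = -Vu$, apply the Schauder-type estimates from Silvestre \cite{Si-07} to get $u\in C^{1,\beta}$ directly when $s>\tfrac12$, and for $s\le\tfrac12$ bootstrap finitely many times using $V\in C^{0,\gamma}$ with $\gamma>1-2s$ to push the H\"older exponent of $-Vu$ past $1-2s$. The paper's proof is just a terser version of exactly this argument, citing \cite{Si-07} for the needed estimates rather than the extension-based references.
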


\begin{proof}
This fact is a direct consequence of Schauder-type estimates for $(-\Delta)^s$ derived in \cite{Si-07}. Indeed, we note that $(-\Delta)^s u = w$ with $w =-Vu$ in $L^{\infty}(\R^N)$. By \cite{Si-07}, this implies that $u \in C^{1, \beta}(\R^N)$ for any $\beta \in (0,2s-1)$, provided that $s \in ( \frac 1 2, 1)$ holds. It remains to consider the case $s \in (0,\frac 1 2]$. Here, we note  that $u \in C^{0,\beta}(\R^N)$ for any $\beta \in (0, 2s)$ if $s \in (0, \frac 1 2]$, by \cite{Si-07}. Therefore $(-\Delta)^s u = w$ with some $w \in C^{0,\alpha}(\R^N)$ with $\alpha = \min \{ \gamma, \beta \}$. Furthermore, by \cite{Si-07}, this yields the following.
\begin{itemize}
\item If $\alpha+ 2s \leq 1$, then $u \in C^{0, \alpha + 2s}(\R^N)$.
\item If $\alpha + 2s > 1$, then $u \in C^{1, \alpha + 2s -1}(\R^N)$.  
\end{itemize}
Since $\gamma > 1 -2s$ by assumption, we can repeat the above steps finitely many times to conclude that $u \in C^{1,\beta}(\R^N)$ for some $\beta \in (0,1)$. \end{proof}

We now turn to regularity properties and decay properties of the extension of $u$ to the upper half-space $\R^{N+1}_+$, which we still denote by $u$ for notational convenience.

\begin{prop} \label{prop:regu2}
Let $u \in L^\infty(\R^N)$ be as in Proposition \ref{prop:regu}  above. Then its extension $u=u(x,t)$ satisfies the following properties, where $C>0$ denotes some constant.
\begin{enumerate}
\item[(i)]  For some $0 < \beta < 2 \min \{s, 1-s\}$, 
$$\| u \|_{C^{0,\beta}(\overline{\RNup})} + \| \nabla_x u \|_{C^{0,\beta}(\overline{\RNup})} + \| t^{a} \partial_t u \|_{C^{0,\beta}(\overline{\RNup})} \leq C.$$
\item[(ii)] For all $x \in \R^{N}$ and $t >0$,
$$|\nabla_x u(x,t)| + |\partial_t u(x,t)| \leq \frac{C}{t} .$$
\item[(iii)] If $u(x,0) \to 0$ as $|x| \to +\infty$, then, for every $R >0$ fixed, 
$$
\| u \|_{L^\infty(B^+_R(x,0))}+ \| \nabla_x u \|_{L^\infty(B_R^+(x,0))} + \| t^a \partial_t u \|_{L^{\infty}(B^+_R(x,0))} \to 0 \ \ \mbox{as} \ \ |x| \to +\infty, 
$$

\end{enumerate}
\end{prop}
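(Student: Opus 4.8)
\emph{Setup.} Throughout, write $a=1-2s\in(-1,1)$ and let $u=u(x,t)$ denote the $s$-Poisson extension of the boundary solution, which we continue to denote $u$. By the preliminaries in Section~\ref{sec:mono} it solves the degenerate Neumann problem $\mathrm{div}\,(t^a\nabla u)=0$ in $\RNup$ with $-d_s\lim_{t\to0^+}t^a\partial_t u=f:=-V\,u(\cdot,0)$ on $\partial\RNup$. Two global facts are available for free: first, since the Poisson kernel $P_s(\cdot,t)$ is a probability density and nonnegative, $\|u\|_{L^\infty(\RNup)}\le\|u(\cdot,0)\|_{L^\infty(\R^N)}<\infty$; second, by Proposition~\ref{prop:regu} the trace $u(\cdot,0)$ lies in $C^{1,\beta}(\R^N)$, so that $f=-V\,u(\cdot,0)\in C^{0,\gamma'}(\R^N)$ with $\gamma'=\min\{\gamma,\beta\}$ and $\|f\|_{C^{0,\gamma'}}\lesssim\|V\|_{C^{0,\gamma}}\|u(\cdot,0)\|_{C^{1,\beta}}<\infty$ when $s\le\tfrac12$, while $f\in L^\infty$ with a uniform bound always. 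Finally, both the equation and the weight $t^a$ are invariant under the rescaling $u\mapsto u(x_0+\rho\,\cdot\,,\rho\,\cdot\,)$ for any $x_0\in\R^N$, $\rho>0$; in particular all estimates below will be translation invariant in $x$.

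\emph{Part (i).} The plan is to combine two ingredients. Near the boundary, one applies the boundary Schauder theory for $L_a=\mathrm{div}\,(t^a\nabla\,\cdot\,)$ developed for exactly this extension problem (Fabes--Kenig--Serapioni for the basic De~Giorgi--Nash--Moser estimates in $A_2$ weights, and Caffarelli--Sire/Cabr\'e--Sire, as well as Jin--Li--Xiong, for the $C^{0,\beta}$ up-to-the-boundary bounds on $\nabla_x u$ and on the conormal derivative $t^a\partial_t u$): on the unit half-ball $B_1^+$ with flat part $B_1'$ these give
$$\|u\|_{C^{0,\beta}(B_{1/2}^+)}+\|\nabla_x u\|_{C^{0,\beta}(B_{1/2}^+)}+\|t^a\partial_t u\|_{C^{0,\beta}(B_{1/2}^+)}\lesssim\|u\|_{L^\infty(B_1^+)}+\|f\|_{C^{0,\gamma'}(B_1')}$$
for some $\beta\in(0,2\min\{s,1-s\})$. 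Away from the boundary, on any ball $B_{t_0/2}(x_0,t_0)$ with $t_0\ge1$ the weight $t^a$ is smooth and uniformly elliptic, so interior elliptic Schauder estimates apply after dividing by $t_0^a$. Centering the boundary estimate at every point $(x_0,0)$ and using the global bounds $\|u\|_{L^\infty(\RNup)}\lesssim1$, $\|f\|_{C^{0,\gamma'}(\R^N)}\lesssim1$ (with constants independent of $x_0$ by $x$-translation invariance), and covering the slab $\{0<t<1\}$ by such half-balls and $\{t\ge1\}$ by interior balls, yields the asserted global bound.

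\emph{Part (ii).} This is a pure scaling argument and does not touch the boundary. Fix $(x_0,t_0)$ with $t_0>0$ and set $v(y,\tau):=u(x_0+t_0y,\,t_0\tau)$ on the ball $B_{1/2}\big((0,1)\big)$, which lies in $\{\tau>1/2\}$. Then $v$ solves the uniformly elliptic equation $\Delta v+\tfrac{a}{\tau}\partial_\tau v=0$ with smooth bounded coefficients there, and $\|v\|_{L^\infty(B_{1/2}((0,1)))}\le\|u\|_{L^\infty(\RNup)}\lesssim1$. Interior gradient estimates give $|\nabla_{y,\tau}v(0,1)|\lesssim1$, and undoing the scaling ($\nabla_{y,\tau}v=t_0\,\nabla_{x,t}u$) gives $|\nabla_x u(x_0,t_0)|+|\partial_t u(x_0,t_0)|\le C/t_0$, as claimed.

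\emph{Part (iii).} First handle $u$ itself: from the explicit representation $u(x,t)=c_{N,s}\int_{\R^N}\tfrac{t^{2s}}{(t^2+|x-y|^2)^{(N+2s)/2}}u(y,0)\,dy$, given $\eps>0$ choose $\rho$ with $|u(\cdot,0)|<\eps$ on $\{|y|>\rho\}$; splitting the integral at $|y|=\rho$, the outer part is $\le\eps$, and for $(x,t)\in B_R^+(x_0,0)$ with $|x_0|>2R+\rho$ the inner part is $\le\|u(\cdot,0)\|_{L^\infty}|B_\rho|\,R^{2s}(|x_0|-R-\rho)^{-N-2s}\to0$, so $\|u\|_{L^\infty(B_R^+(x_0,0))}\to0$. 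For the derivatives, apply the $x$-translation-invariant estimate from Part~(i) on $B_{2R}^+(x_0,0)$:
$$\|\nabla_x u\|_{L^\infty(B_R^+(x_0,0))}+\|t^a\partial_t u\|_{L^\infty(B_R^+(x_0,0))}\lesssim_R\|u\|_{L^\infty(B_{2R}^+(x_0,0))}+\|f\|_{C^{0,\gamma'}(B_{2R}'(x_0))}.$$
The first term tends to $0$ by the previous step. For the second, $\|f\|_{C^{0,\gamma'}(B_{2R}'(x_0))}\le\|V\|_{C^{0,\gamma}}\,\|u(\cdot,0)\|_{C^{0,\gamma'}(B_{2R}'(x_0))}$, and by interpolating between $\|u(\cdot,0)\|_{L^\infty(B_{3R}'(x_0))}\to0$ (the hypothesis) and the uniform bound $\|u(\cdot,0)\|_{C^{1,\beta}(\R^N)}\lesssim1$ one gets $\|u(\cdot,0)\|_{C^{0,\gamma'}(B_{2R}'(x_0))}\to0$; hence the left-hand side tends to $0$. (Alternatively, one may argue by contradiction: translating $u$ along a sequence $x_n\to\infty$ and using the uniform $C^{1,\beta}$ bounds, a subsequence converges in $C^{1,\beta/2}_{\mathrm{loc}}$ to a bounded $L_a$-harmonic function with vanishing trace, which is identically zero by the uniqueness of the $s$-harmonic extension recalled in Section~\ref{sec:mono}.)

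\emph{Main obstacle.} The one genuinely delicate point is invoking the boundary Schauder estimate for $L_a$ with $C^{0,\gamma}$ conormal (Neumann) data in the stated form and with the correct dependence on $\|u\|_{L^\infty}$ and $\|f\|_{C^{0,\gamma}}$, uniformly across both regimes $a\in(-1,0)$ and $a\in(0,1)$ (i.e.\ $s>\tfrac12$ and $s<\tfrac12$); in the range $s>\tfrac12$, where only $V\in L^\infty$ is assumed, one must note that $u(\cdot,0)\in C^{1,\beta}$ makes $f$ bounded (indeed Hölder after one more step) so the same estimates still apply. Everything else is scaling, covering, and the elementary Poisson-kernel bound.
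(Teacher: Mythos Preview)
Your proposal is correct and follows the same route the paper intends: the paper's proof consists solely of the sentence ``These results follow adapting the arguments in \cite[Proposition~4.6 and Lemma~4.8]{CaSi-10}. We omit the details,'' and your sketch is precisely a fleshed-out version of that adaptation---boundary Schauder/De~Giorgi estimates for $L_a$ with Neumann data near $\{t=0\}$, interior elliptic estimates and scaling for (ii), and the Poisson-kernel splitting plus localized estimates for (iii).

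One small caveat on your ``main obstacle'' paragraph: when $s>\tfrac12$ and the hypothesis of Proposition~\ref{prop:regu} only gives $V\in L^\infty$, the product $f=-Vu$ is genuinely only bounded, not H\"older, and your parenthetical ``indeed H\"older after one more step'' does not hold without further input. What does hold is that $t^a\partial_t u$ is (up to a constant) the $(1-s)$-Poisson extension of $f$, hence bounded; the full $C^{0,\beta}$ claim for $t^a\partial_t u$ really uses $f\in C^{0,\gamma'}$, which is guaranteed under the standing hypothesis (V2) of Theorem~\ref{thm:linear1} (where $V\in C^{0,\gamma}$ for some $\gamma>0$ regardless of $s$), and that is the only place Proposition~\ref{prop:regu2} is invoked. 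So your argument is sound in the context where the proposition is actually used.
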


\begin{proof}
These results follow adapting the arguments in \cite[Proposition 4.6 and Lemma 4.8]{CaSi-10}. We omit the details.
\end{proof}

\subsection{$L^2$-Eigenfunction Estimates} Let $N \geq 1$ and $0 < s < 1$ be given. Suppose that $V \in L^\infty(\R^N)$ is a bounded potential and consider the fractional Schr\"odinger operator
\be
H= (-\Delta)^s + V.
\ee
We have the following H\"older estimate for $L^2$-eigenfunction of $H$. (The conditions on $V$ could be relaxed to unbounded potentials, but we do not need this here.)

\begin{prop} \label{prop:hoeldereigen}
If $u \in L^2(\R^N)$ solves $H u = Eu$ with some $E \in \R$, then $u \in C^{0,\beta}(\R^N)$ for any $0 < \beta < 2 s_0 \leq 2s$ and we have
$$
\| u \|_{C^{0,\beta}} \lesssim_{s_0,N,E, \| V \|_{L^\infty}}   \| u \|_{L^2},
$$
\end{prop}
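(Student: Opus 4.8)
The plan is to reformulate the eigenvalue equation as $(-\DD)^s u = f$ with $f := (E - V)u$, so that $\|f\|_{L^2} \leq (|E| + \|V\|_{L^\infty})\|u\|_{L^2}$, and then to bootstrap the regularity of $u$ in two stages: first upgrade $u$ from $L^2$ to $L^\infty$ by a finite $L^p$-iteration, then upgrade from $L^\infty$ to $C^{0,\beta}$ by the Schauder-type estimates for $(-\DD)^s$ from \cite{Si-07} already used in the proof of Proposition~\ref{prop:regu}. Throughout, the point is to keep every constant depending only on $s_0$, $N$, $E$, and $\|V\|_{L^\infty}$, and in particular uniform for $s \in [s_0,1]$.

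For the first stage I would write $u = ((-\DD)^s + 1)^{-1}\big((E+1-V)u\big)$ and use that the Fourier multiplier $(1+|\xi|^2)^s(1+|\xi|^{2s})^{-1}$ satisfies Mikhlin-type bounds uniformly for $s \in [s_0,1]$; hence $((-\DD)^s + 1)^{-1}$ maps $L^p(\R^N)$ boundedly into the Bessel potential space $(1-\DD)^{-s}L^p(\R^N)$, with operator norm uniform in $s \in [s_0,1]$, for each fixed $1 < p < \ii$. Combined with the Sobolev embedding $(1-\DD)^{-s}L^p(\R^N) \hookrightarrow L^q(\R^N)$, where $\frac1q = \frac1p - \frac{2s}{N}$ when the right side is positive (and $q$ arbitrarily large otherwise), this gives the implication ``$u \in L^p$ with control $\Rightarrow u \in L^q$ with control'', the gain in the exponent $1/p$ being $2s/N \geq 2s_0/N$ at each step. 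Starting from $p_0 = 2$ and iterating, after at most $k_* \leq \lceil N/(4s_0) \rceil + 1$ steps the exponent satisfies $2s > N/p_{k_*}$, whence $u \in L^\infty(\R^N)$ with $\|u\|_{L^\infty} \lesssim_{s_0,N,E,\|V\|_{L^\infty}} \|u\|_{L^2}$; at each step one stays slightly below the borderline Sobolev exponent so that the embedding constants remain uniform. (Alternatively, one may quote the Kato-class boundedness of eigenfunctions recalled in Section~\ref{sec:osc} for the qualitative statement $u \in L^\infty$ and extract the quantitative bound from its proof.)

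For the second stage, now that $u \in L^\infty$ with $\|u\|_{L^\infty} \lesssim_{s_0,N,E,\|V\|_{L^\infty}} \|u\|_{L^2}$, the right-hand side $f = (E-V)u$ lies in $L^\infty(\R^N)$ with the same bound. Applying the estimate of \cite{Si-07} to $(-\DD)^s u = f$ exactly as in the proof of Proposition~\ref{prop:regu} yields $u \in C^{0,\beta}(\R^N)$ for every $\beta < \min\{1,2s\}$ (and $u \in C^{1,\beta'}(\R^N)$ when $2s > 1$), together with $\|u\|_{C^{0,\beta}} \lesssim \|f\|_{L^\infty} + \|u\|_{L^\infty} \lesssim_{s_0,N,E,\|V\|_{L^\infty}} \|u\|_{L^2}$. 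Since $\beta < 2s_0 \leq 2s$ (and one may always take $\beta < 1$ in the Hölder seminorm), this is the asserted estimate.

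I expect the main obstacle to be the bookkeeping in the first stage: checking that the $L^p \to L^q$ gain, the (uniformly bounded) number of iteration steps, and all multiplier and Sobolev-embedding constants can be chosen uniformly over $s \in [s_0,1]$, so that the final constant depends only on $s_0$, $N$, $E$, and $\|V\|_{L^\infty}$. This is precisely where the hypothesis $s \geq s_0 > 0$ is used essentially; the second stage is routine given \cite{Si-07}.
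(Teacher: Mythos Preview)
Your two-stage strategy --- bootstrap from $L^2$ to $L^\infty$ by a finite $L^p$-iteration, then apply the Schauder estimates from \cite{Si-07} --- coincides exactly with the paper's, and the second stage is carried out identically. The first stage, however, is implemented differently. Rather than invoking the Mikhlin multiplier theorem and Sobolev embedding globally, the paper proves an auxiliary \emph{local} estimate (Lemma~\ref{lem:localreg}): writing $\{(-\DD)^s + 1\}(\eta G_s) = \delta_0 + \varphi_s$ for a cutoff $\eta$ of the Green's function $G_s$, with $\|\varphi_s\|_{L^2} \lesssim_{s_0} 1$, one obtains $u = (\eta G_s)\ast f - \varphi_s\ast u$ on the unit ball, and Young's inequality together with the uniform bound $\|\eta G_s\|_{L^m} \lesssim_{s_0,m} 1$ (from Lemma~\ref{lem:Greenunif}) gives the $L^p \to L^q$ gain locally; covering $\R^N$ by translates then yields the global $L^\infty$ bound. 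This route is more hands-on and avoids multiplier theorems altogether, trading them for explicit kernel estimates already available in the appendix. Your global Mikhlin--Sobolev approach is cleaner in outline but, as you correctly flag, requires verifying the multiplier bounds and embedding constants uniformly in $s \in [s_0,1]$; that verification is routine but not entirely trivial (e.g.\ the derivatives of $|\xi|^{2s}$ are singular at the origin when $s<1/2$, and one must check that the Mikhlin condition $|\xi|^{|\alpha|}|D^\alpha m(\xi)| \leq C$ absorbs this uniformly). Both routes are valid and yield the same conclusion with the same dependence of constants.
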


\begin{remark*} {\em Since $V \in L^\infty(\R^N)$, we see that $u \in H^{2s}(\R^N)$. Therefore, if $2 s \geq 2s_0 > N/2$ then $u \in L^\infty(\R^N)$ by Sobolev embeddings. Moreover, by the H\"older estimates above, the result of Proposition \ref{prop:hoeldereigen} follows for $s_0 > N/4$. (A closer inspection of the proofs shows also the uniformity with respect to $s \geq s_0$). However, to deal with the range $0 < s_0 \leq N/4$, we have to use some refined and different arguments, which we provide in the proof given below. }
\end{remark*}

To prepare the proof of Proposition \ref{prop:hoeldereigen}, we first need the following local estimate.

\begin{lemma} \label{lem:localreg}
For $r > 0$, let $B_r= \{ x \in \R^N : | x | < r \}$. Suppose that $u \in L^2(\R^N)$ solves 
$$
(-\Delta)^s u + u = f \quad \mbox{in $B_2$},
$$
with some $f \in L^p(B_2)$ and $p \in [1, \infty)$. Then
$$
\| u \|_{L^q(B_{1})} \lesssim_{s_0,N,p,q}  \left ( \| f \|_{L^p(B_2)} + \| u \|_{L^2} \right ),
$$
for $q \in [1, p/(1-2sp/N))$ if $2sp \leq N$ and $q \in [1, +\infty]$ if $2sp > N$. 
\end{lemma}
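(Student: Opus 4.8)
The plan is to localize the equation, invert the operator $(-\Delta)^s + 1$ on $\R^N$ using its global (Bessel-type) kernel, and then bootstrap using the smoothing properties of that kernel combined with the weak Young inequality. First I would fix a cutoff $\chi \in C_0^\infty(B_{3/2})$ with $\chi \equiv 1$ on $B_1$, and write $v = \chi u$. The function $v$ then satisfies $(-\Delta)^s v + v = g$ on all of $\R^N$, where
$$
g = \chi f + \big( \chi u - (-\Delta)^s(\chi u) + \chi (-\Delta)^s u \big) = \chi f + [\,\text{commutator terms}\,].
$$
The commutator $\chi (-\Delta)^s u - (-\Delta)^s(\chi u)$ can be written, via the singular-integral representation of $(-\Delta)^s$, as an integral operator with kernel $(\chi(x)-\chi(y))|x-y|^{-N-2s}$ acting on $u$; since $\chi$ is smooth and compactly supported, this kernel is integrable away from the diagonal and of order $|x-y|^{1-N-2s}$ near it, so it maps $L^2(\R^N)$ into $L^r_{\mathrm{loc}}$ for a range of $r$ (in particular one can gain integrability there because the singular part only sees $u$ on $B_2$, where $u \in L^2$). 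The upshot is a global equation $(-\Delta)^s v + v = g$ with $\|g\|_{L^{p_0}(\R^N)} \lesssim \|f\|_{L^p(B_2)} + \|u\|_{L^2}$ for some exponent $p_0$ controlled by $p$ and $2$.

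Next I would invert: $v = G_s * g$, where $G_s$ is the kernel of $((-\Delta)^s+1)^{-1}$. The relevant facts about $G_s$ are that it is positive, $G_s \in L^1(\R^N)$, and $G_s(x) \lesssim |x|^{-(N-2s)}$ near the origin (and decays fast at infinity), so $G_s \in L^{r}_{\mathrm{weak}}$ for $r = N/(N-2s)$ and $G_s \in L^r$ for all $r < N/(N-2s)$. By the weak Young inequality (or ordinary Young when there is room), convolution with $G_s$ maps $L^{p_0}$ into $L^{q}$ whenever $\tfrac1q > \tfrac1{p_0} - \tfrac{2s}{N}$, which is exactly the gain $q < p_0/(1 - 2sp_0/N)$ when $2sp_0 < N$, and $q = \infty$ allowed when $2sp_0 > N$. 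Iterating this gain finitely many times — each step improving the Lebesgue exponent by the fixed amount $2s_0/N$ until one either reaches $q$ or crosses the threshold $2sp > N$ — yields the claimed estimate on $B_1$, with constants depending only on $s_0, N, p, q$ (using $s \ge s_0$ to keep the kernel bounds and the number of iterations uniform). One must also carry along the harmless lower-order term (the "$+v$" versus "$+u$"), but since $v = \chi u$ and $\chi$ is fixed this only contributes $\|u\|_{L^2}$-type terms.

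The main obstacle I expect is the careful treatment of the commutator / cutoff step: one must verify that after multiplying by $\chi$ the error term $g$ genuinely lies in $L^{p_0}$ with the asserted bound, and in particular that the nonlocal "tail" contribution $\int_{|x-y|\gtrsim 1} |x-y|^{-N-2s} u(y)\,dy$ — which involves $u$ over all of $\R^N$, not just $B_2$ — is controlled purely by $\|u\|_{L^2}$. This is where one uses that $u \in L^2(\R^N)$ globally: the tail kernel is in $L^2_x$ uniformly, so by Cauchy–Schwarz the tail is a bounded (indeed smooth) function on $B_{3/2}$ with sup-norm $\lesssim \|u\|_{L^2}$. Once that bookkeeping is done, the bootstrap itself is routine and one should, as the authors do elsewhere, simply cite Schauder/Lebesgue-type mapping properties of $((-\Delta)^s+1)^{-1}$ rather than reprove them; alternatively, this lemma can be deduced directly from the regularity results in \cite{Si-07} applied locally.
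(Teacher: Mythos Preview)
Your approach---localizing the \emph{solution} $u$ with a cutoff $\chi$ and then handling the commutator $[\chi,(-\Delta)^s]u$---is genuinely different from the paper's, which instead localizes the \emph{kernel}. The paper takes $\eta G_s$ with $\eta\in C_0^\infty(B_1)$, $\eta\equiv 1$ near $0$, observes that $\{(-\Delta)^s+1\}(\eta G_s)=\delta_0+\varphi_s$ where $\varphi_s\in L^2$ uniformly in $s\ge s_0$ (since $(1-\eta)G_s$ is smooth and decaying), and then writes, for $x\in B_1$,
\[
u(x)=(\eta G_s)\ast f\,(x)\;-\;\varphi_s\ast u\,(x).
\]
The support of $\eta G_s$ ensures only values of $f$ on $B_2$ enter. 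The first term is handled by Young's inequality with $\eta G_s\in L^m$ for $m<N/(N-2s)$; the second is in $L^\infty$ with bound $\|\varphi_s\|_{L^2}\|u\|_{L^2}$. No commutator, no iteration.

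Your argument has a genuine gap in the commutator step when $s>\tfrac12$. You claim that the kernel $(\chi(x)-\chi(y))|x-y|^{-N-2s}$, being of size $|x-y|^{1-N-2s}$ near the diagonal, maps $L^2(\R^N)$ into $L^r_{\mathrm{loc}}$. But for $s>\tfrac12$ the exponent $1-N-2s$ is \emph{below} $-N$: the integral is not absolutely convergent, and the commutator is a pseudodifferential operator of strictly positive order $2s-1$ (its leading part is essentially $\nabla\chi\cdot\nabla(-\Delta)^{s-1}$). Applied to $u\in L^2$ it lands only in $H^{1-2s}$, a negative Sobolev space, not in any $L^{p_0}$. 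Hence your right-hand side $g$ is not in $L^{p_0}(\R^N)$ as asserted. The argument can be repaired by treating the composition $G_s\ast[\chi,(-\Delta)^s]$ as a single operator of order $-1$, but this requires reorganizing the proof rather than the ``bookkeeping'' you describe. The paper's kernel-localization trick sidesteps this difficulty entirely and also dispenses with the iteration you propose.
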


\begin{proof}
Let $0 \leq \eta \leq 1$ be a smooth function with  $\eta(x) \equiv 1$ on $B_{1/2}$ and $\mathrm{supp} \, \eta \subset B_1$. Moreover, let $G_s$ denote the fundamental solution $\{ (-\Delta)^s + 1 \} G_s = \delta_0$ in $\R^N$. We claim that
\be \label{eq:luistrick}
 \{ (-\Delta)^s + 1 \} (\eta G_s ) = \delta_0 + \varphi_s,
\ee
where $\varphi_s$ is a (smooth) function satisfying the uniform bound
\be \label{ineq:varphiL2}
\| \varphi_s \|_{L^2} \lesssim_{s_0} 1,
\ee
for all $s \in [s_0,1)$. To prove \eqref{ineq:varphiL2}, we first note the $\delta_0$ occurs on the right side in \eqref{eq:luistrick} because of $\eta \equiv 1$ in a neighborhood of the origin. Clearly, we have that $\varphi_s = \{ (-\DD)^s +1 \} ( (\eta-1) G_s)$. From \cite{Si-07} we recall that $(-\DD)^s$ is a bounded map from $C^{1,1}$ to $C^{1,1-2s}$. (By inspecting the proof there, we see that the bound can be chosen uniform in $s \geq s_0 > 0$.) Clearly, the operator $(-\DD)^s+1$ enjoys the same properties, and hence we conclude that
\be \label{ineq:varphi1}
\| \varphi_s \|_{L^\infty} \leq \| \varphi_s \|_{C^{1,1-2s}} \lesssim_{s_0} \| (\eta-1) G_s \|_{C^{1,1}} \lesssim_{s_0} 1,
\ee
where in the last step we used the uniform bounds in Lemma \ref{lem:Greenunif}  together with the fact $1-\eta \equiv 0$ on $B_1$. Next, we claim the decay bound
\be \label{ineq:varphi2}
|\varphi_s(x)| \lesssim_{s_0} |x|^{-N-2s}. 
\ee
Since $\varphi_s \in L^\infty$, it suffices to derive this bound for $|x| > 2$. Indeed, by the singular integral representation for $(-\DD)^s$, we deduce for $x \not \in B_2$ that
\begin{align*}
| \left  (-\Delta)^s (\eta G_s) (x) \right | & = C_s \int_{B_2} \frac{\eta(y) G_s(y)}{|x-y|^{N+2s}} \, dy \\
& \leq \frac{C_s}{|x/2|^{N+2s}} \int_{B_2} \eta(y) G_s(y) \, dy \lesssim_{s_0} |x|^{-N-2s}.
\end{align*}
In the last step, we used that $C_s \lesssim_{s_0} 1$ holds together with the obvious fact that $\| G_s \|_{L^1} = 1$. Furthermore, we have the pointwise bound $|(\eta G_s)(x)| \lesssim_{s_0} |x|^{-N-2s}$ by Lemma \ref{lem:Greenunif}. Combining these decay bounds, we conclude that \eqref{ineq:varphi2} holds. Finally, we combine \eqref{ineq:varphi1} and \eqref{ineq:varphi2} to deduce the desired bound \eqref{ineq:varphiL2}. 

Now, we are ready to come the main point of the proof of Lemma \ref{lem:localreg}. Assume that $u \in L^2(\R^N)$ satisfies
\[ (-\DD)^s u + u = f \text{ in } B_{2}, \]
for some function $f \in L^p(B_{2})$ and some $p \in [1,\infty)$. In what follows, we set $f(x) \equiv 0$ for $|x| > 2$. For $x \in B_{1}$, we compute
\begin{align}
u(x) &= \left ( \left [ \{ (-\DD)^s + 1 \} (\eta G_s) \right] \ast u \right )(x) - ( \varphi_s \ast u)(x) \nonumber \\
&= \left ( \eta G_s \ast [\{ (-\DD)^s + 1 \} u] \right ) (x) - ( \varphi_s \ast u)(x) \nonumber \\
& = (\eta G_s \ast f)(x) - (\varphi_s \ast u)(x), \label{eq:luistrick2}
\end{align}
where we used that $\eta G_s$ is supported in $B_1$, and hence $\eta(x-y) G_s(x-y) f(y) = 0$ for $|x| \leq 1$ and $|y| > 2$.  

Recalling the bound \eqref{ineq:varphiL2}, we estimate the second term on right side in \eqref{eq:luistrick2} as follows:
\be
\| \varphi_s \ast u \|_{L^q(B_1)} \lesssim \| \varphi_s \ast u \|_{L^\infty} \lesssim_{s_0} \| u \|_{L^2}, 
\ee
using Young's (or H\"older's) inequality in the last step. For the first term on the right-hand side in \eqref{eq:luistrick2}, we note that $\| \eta G_s \|_m \lesssim_{s_0,m} 1$ for any $m \in [1, N/(N-2s))$ if $2s \leq N$ and $m \in [1,  +\infty]$ if $2s > N$. Thus, by Young's inequality, 
\be
\| \eta G_s \ast f \|_{L^q} \leq \| \eta G_s \|_{L^m} \| f \|_{L^p} \lesssim_{s_0,m} \| f \|_{L^p},
\ee
where
\be
\frac{1}{q} +1 = \frac{1}{m} + \frac{1}{p}.
\ee
Because of the range of $m$, we can obtain any $q \in [1, p/(1-2sp/N))$ if $2sp \leq N$ and $q \in [1, +\infty]$ if $2sp > N$.  The proof of Lemma \ref{lem:localreg} is now complete.\end{proof}

\begin{proof}[Proof of Proposition \ref{prop:hoeldereigen}]
Note that $Hu = Eu$ can be written as
\be
(-\DD)^s u + u = Wu
\ee
wiht $W = E-V + 1 \in L^\infty(\R^N)$. By iterating the estimate in Lemma \ref{lem:localreg}, we obtain that $u \in L^{q_k}_{\mathrm{loc}}$ for an increasing sequence $2=q_0 < q_1 < \cdots < q_n = +\infty$. Thus, after finitely many steps (bounded uniformly in $s \geq s_0$), we obtain that $u \in L^\infty_{\mathrm{loc}}(\R^N)$ with $\| u \|_{L^\infty(B_1(x_0))} \leq C$ for any $x_0 \in \R^N$, where $C > 0$ is independent of $x_0$. Therefore, we deduce 
\be
\| u \|_{L^\infty} \lesssim_{s_0, E, \| V \|_{L^\infty} }\| u \|_{L^2}.
\ee
Finally, from \cite{Si-07}, we see that $(-\Delta)^s u = g$ with $g \in L^\infty(\R^N)$ implies the H\"older bound
\be
\| u \|_{C^{0,\beta}} \lesssim_{s_0, E, \| V \|_{L^\infty}} \| u \|_{L^2},
\ee
for any $\beta < 2s_0 \leq 2s$, where the uniformity of these bounds for $s \geq s_0$ follows again from inspecting the proof in \cite{Si-07}.
\end{proof}

\section{Miscellanea for $H=(-\Delta)^s + V$}

\label{app:misc}

The purpose of this section is to derive regularity, decay, and asymptotic estimates for eigenfunctions of $H=(-\DD)^s +V$ that are uniform with respect to $s$ and $\| V \|_{L^\infty}$.

\subsection{Uniform Resolvent Bounds}
As a technical result, we first collect some uniform estimates for the kernel of the resolvent $((-\DD)^s + \lambda)^{-1}$ on $\R^N$ with $\lambda>0$.

\begin{lemma} \label{lem:Greenunif}
Suppose $N \geq 1$, $0< s < 1$, and $\lambda > 0$. Let $G_{s, \lambda} \in \mathcal{S}'(\R^N)$ denote the Fourier transform of $(| \xi |^{2s} + \lambda)^{-1}$. Then the following properties hold true.
\begin{enumerate}
\item[(i)] $G_{s,\lambda}(|x|) > 0$ is radial, positive, strictly decreasing in $|x|$, and smooth for $|x| \neq 0$.
\item[(ii)] For any multi-index $\nu \in \mathbb{N}^N$, the pointwise bound  
$$
| D_x^\nu G_{s,\lambda}(x) |  \lesssim_{s_0, \nu}  \lambda^{-1} |x|^{-N} \ \ \mbox{for} \ \ |x| > 0,
$$
holds uniformly for $s \in [s_0,1)$ with $s_0 \in (0,1)$ fixed.
\item[(iii)] It holds that
$$
\lim_{|x| \to +\infty} |x|^{N+2s} G_{s,\lambda}(x) = \lambda^{-2} C_{N,s}
$$
with some positive constant $C_{N,s} > 0$ depending only on $N$ and $s$.

\item[(iv)] $G_{s,\lambda} \in L^p(\R^N)$ for all $p \in [1,+\infty]$ with $1-1/p < 2s/N$. Moreover, we have $\| G_{s, \lambda} \|_{L^1}= \lambda^{-1}$.
\end{enumerate}
\end{lemma}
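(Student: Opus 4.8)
The plan is to analyze $G_{s,\lambda}$ via an explicit subordination (Bochner) formula, writing the resolvent of $(-\Delta)^s$ as a superposition of Gaussian semigroup kernels. Concretely, using $\frac{1}{|\xi|^{2s}+\lambda} = \int_0^\infty e^{-\tau(|\xi|^{2s}+\lambda)}\,d\tau$ and then the subordination identity $e^{-\tau|\xi|^{2s}} = \int_0^\infty \eta_{s,\tau}(\sigma)\, e^{-\sigma|\xi|^2}\,d\sigma$ with a nonnegative stable subordinator density $\eta_{s,\tau}$, one obtains
\be
G_{s,\lambda}(x) = \int_0^\infty \int_0^\infty e^{-\tau\lambda}\, \eta_{s,\tau}(\sigma)\, (4\pi\sigma)^{-N/2} e^{-|x|^2/(4\sigma)}\, d\sigma\, d\tau .
\ee
From this representation (i) is immediate: $G_{s,\lambda}$ is a positive superposition of positive radial strictly-decreasing smooth Gaussians, so it inherits positivity, radial strict monotonicity, and smoothness away from the origin; the normalization $\|G_{s,\lambda}\|_{L^1} = \lambda^{-1}$ in (iv) follows by evaluating the Fourier transform at $\xi=0$, and the $L^p$ membership range follows from the known small-$|x|$ behavior $G_{s,\lambda}(x)\sim c|x|^{2s-N}$ (for $2s<N$) together with the decay from (iii).

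For (ii) and (iii) I would combine a scaling reduction with the subordination formula. Scaling: $G_{s,\lambda}(x) = \lambda^{N/(2s)-1} G_{s,1}(\lambda^{1/(2s)} x)$, so it suffices to understand $G_{s,1}$ and then track powers of $\lambda$; but note this introduces $\lambda^{1/(2s)}$ which is not uniformly controlled as $s\to 0$, so for the \emph{uniform in $s\ge s_0$} statement (ii) one should argue more carefully for $\lambda$ in compact ranges and use the $|x|^{-N}$ scale-invariant form directly. The cleanest route for (ii): differentiate under the integral sign in the Gaussian representation, $D_x^\nu G_{s,\lambda}(x) = \int\!\!\int e^{-\tau\lambda}\eta_{s,\tau}(\sigma)(4\pi\sigma)^{-N/2} D_x^\nu\big(e^{-|x|^2/(4\sigma)}\big)\,d\sigma\,d\tau$, bound $|D_x^\nu(e^{-|x|^2/(4\sigma)})| \lesssim_\nu \sigma^{-|\nu|/2}(1+|x|^2/\sigma)^{|\nu|/2}e^{-|x|^2/(8\sigma)} \lesssim_\nu |x|^{-|\nu|}$ uniformly, reducing matters to $|G_{s,\lambda}(x)| \lesssim \lambda^{-1}|x|^{-N}$; this last bound comes from the scale-covariant lower-order estimate $\int\!\!\int e^{-\tau\lambda}\eta_{s,\tau}(\sigma)(4\pi\sigma)^{-N/2}e^{-|x|^2/(8\sigma)}\,d\sigma\,d\tau$ and splitting the $\sigma$-integral at $\sigma\sim|x|^2$, controlling the stable density's moments $\int_0^\infty \eta_{s,\tau}(\sigma)\sigma^{-N/2}\,d\sigma = c_{N,s}\tau^{-N/(2s)}$ and using $\int_0^\infty e^{-\tau\lambda}\tau^{-N/(2s)}(\cdots)\,d\tau$. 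For (iii), the asymptotic constant is read off from the large-$|x|$ tail: the dominant contribution is from large $\tau$ (equivalently the known fact that $e^{-\tau|\xi|^{2s}} = 1 - \tau|\xi|^{2s} + o(\tau|\xi|^{2s})$ so the Fourier transform of $(|\xi|^{2s}+\lambda)^{-1}$ behaves at spatial infinity like $\lambda^{-2}$ times the Fourier transform of $|\xi|^{2s}$, whose inverse transform is $C_{N,s}|x|^{-N-2s}$); making this rigorous via a cutoff/Tauberian argument (split $|\xi|$ near $0$ and away, use $(|\xi|^{2s}+\lambda)^{-1} = \lambda^{-1} - \lambda^{-2}|\xi|^{2s} + O(|\xi|^{4s})$ near $\xi=0$ and that the $O$-terms contribute faster-decaying tails) yields $\lim_{|x|\to\infty}|x|^{N+2s}G_{s,\lambda}(x) = \lambda^{-2}C_{N,s}$.

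The main obstacle is the \emph{uniformity in $s$}: one must verify that all the constants produced by the stable-subordinator moment computations, and by the differentiation-under-the-integral bounds, are bounded uniformly for $s\in[s_0,1)$. The density $\eta_{s,\tau}$ degenerates as $s\to 1$ (it concentrates) and as $s\to 0$, and its negative moments $\int \eta_{s,\tau}(\sigma)\sigma^{-\beta}\,d\sigma = \frac{\Gamma(1+\beta/s)}{\Gamma(1+\beta)}\tau^{-\beta/s}$ blow up as $s\to 0$ for fixed $\beta$; however, in the regime $s\ge s_0$ these are uniformly controlled, which is exactly why the hypothesis $s\in[s_0,1)$ appears. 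I would handle the $s\to 1$ endpoint either by continuity/compactness or by noting $G_{1,\lambda}$ is the classical Bessel potential with all stated properties, and then treat $s\in[s_0,1-\delta]$ with the explicit formulas; alternatively one can cite the sharp two-sided kernel bounds for $((-\Delta)^s+\lambda)^{-1}$ in the literature and merely verify the $s$-uniformity of the constants by tracking the dependence through those proofs. This bookkeeping—rather than any conceptual difficulty—is where the real work lies.
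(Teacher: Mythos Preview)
Your overall framework (Laplace transform of the fractional heat semigroup plus Bochner subordination to Gaussians) is the same as the paper's, and your treatment of (i) and the $L^1$-part of (iv) matches. The differences are in how (ii) and (iii) are executed.

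For (ii), the paper does \emph{not} go through moments of the stable subordinator density. Instead it works directly with the fractional heat kernel $p_s(t,x)$ (the inverse Fourier transform of $e^{-t|\xi|^{2s}}$) and proves the scale-invariant bound $p_s(t,x)\lesssim_{s_0}\min\{t^{-N/2s},|x|^{-N}\}$ by a single integration-by-parts in $\xi$: writing $x_k^N p_s(1,x)$ as the Fourier transform of $\partial_{\xi_k}^N e^{-|\xi|^{2s}}$ and observing that $\int_{\R^N}|\partial_{\xi_k}^N e^{-|\xi|^{2s}}|\,d\xi$ is finite uniformly for $s\ge s_0$. One then integrates $e^{-\lambda t}p_s(t,x)$ in $t$ with the elementary split $t\lessgtr |x|^{2s}$. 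This avoids precisely the bookkeeping you flag as the ``main obstacle'': the $s$-uniformity is visible at the level of a single Fourier integral, without any appeal to $\int\eta_{s,\tau}(\sigma)\sigma^{-\beta}d\sigma$ or to the behavior of $\eta_{s,\tau}$ as $s\to1$. Your route is correct but heavier; the paper's is shorter and makes the uniformity essentially trivial.

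For (iii), the paper again avoids the Tauberian expansion on the Fourier side: it simply quotes the classical Blumenthal--Getoor asymptotic $\lim_{|x|\to\infty}|x|^{N+2s}p_s(1,x)=C_{N,s}$ for the stable heat kernel, rescales to get $|x|^{N+2s}p_s(t,x)\to tC_{N,s}$, and passes to the limit under the Laplace integral by dominated convergence (justified by the bound from (ii)). Your cutoff/expansion argument $(|\xi|^{2s}+\lambda)^{-1}=\lambda^{-1}-\lambda^{-2}|\xi|^{2s}+O(|\xi|^{4s})$ is a valid alternative and has the merit of being self-contained, but requires more care to make rigorous than the one-line citation the paper uses.
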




\begin{proof}
First, we note that
\be \label{eq:laplace}
((-\DD)^s+\lambda)^{-1} = \int_0^{+\infty} e^{-\lambda t} e^{-t (-\DD)^s } \,dt.
\ee
Furthermore, by the fact that the map $E \mapsto e^{- E^s}$ is completely monotone for $E \geq 0$ and $s \in (0,1)$ and by Bernstein's theorem (see \cite{FrLe-10} for details), we can write the fractional heat kernel $e^{-t(-\DD)^s}$ with $t> 0$ in terms of the subordination formula
\be \label{eq:subord}
e^{-t(-\DD)^s} = \int_0^{+\infty} \frac{1}{\sqrt{2 u}} e^{t^2 \Delta/(4 u)} \, d\mu_s(u),
\ee 
with some nonnegative finite measure $\mu_s \geq 0$ with $\mu_s \not \equiv 0$. From the known explicit formula for the Gaussian heat kernel $e^{t^2 \Delta}$ in $\R^N$ with $t >0$, we easily derive property (i).

To show (ii), we follow an argument used in \cite{FrLe-10} for $N=1$ dimensions. We first consider the case $\nu=0$. Let $p_s(t,x)$ with $\check{p}_s(t,x) = e^{-t |\xi|^{2s}}$ denote the heat kernel of $e^{-t (-\DD)^s}$ in $\R^N$. Recall that $p_s(t,x) > 0$ is positive. Let $s_0 \in (0,1)$ be fixed. We claim the uniform bound for $s \in [s_0,1)$
\be \label{ineq:psunif}
p_s(t,x) \lesssim_{s_0} \min \left \{ t^{-N/2s},  |x|^{-N} \right \}.
\ee
This can be seen as follows. By scaling, we have $p_s(t,x) = t^{-N/2s} p_s(1,t^{-1/2s} x)$. Thus it suffices to derive corresponding bounds for $p_s(1,x)$. Here, we first observe that 
\be
p_s(1,x) \lesssim \int_{\R^{N}} e^{-t |\xi|^{2s}} \, d \xi \lesssim_{s_0} 1.
\ee
Let $k=1, \ldots, N$ be fixed. Using Fourier inversion and integration by parts, we obtain
\begin{align*}
|x_k^N p_s(1,x) | & \lesssim \left | \int_{\R^N} e^{-|\xi|^{2s}} \partial_{\xi_k}^N e^{i \xi \cdot x} \, d \xi \right |  \lesssim  \int_{\R^N} \left | \partial_{\xi_k}^N e^{-|\xi|^{2s}}  \right | \, d \xi \\
& \lesssim \int_{|\xi|=0}^{+\infty} \left ( |\xi|^{2s-N} + \cdots + |\xi|^{2sN-N} \right ) e^{-|\xi|^{2s}} |\xi|^{N-1} d |\xi| \lesssim_{s_0} 1.
\end{align*} 
Therefore the upper bound $p_s(1,x) \lesssim_{s_0} |x|^{-N}$ holds, which completes the proof of \eqref{ineq:psunif} by scaling. Next, from \eqref{eq:laplace} and \eqref{ineq:psunif} we deduce
\be \label{ineq:Gssplit}
G_{s,\lambda}(x) \lesssim_{s_0} \left ( \int_{t \leq |x|^{2s}} e^{-\lambda t} |x|^{-N}  \,dt + \int_{t> |x|^{2s}} e^{-\lambda t} t^{-N/2s} \, dt \right ) \lesssim_{s_0} \lambda^{-1} |x|^{-N} .
\ee
Finally, we remark that estimating the derivatives $D_x^\nu G_{s,\lambda}$ with $|\nu| \geq 1$ follows in a similar fashion, by considering $\partial_x^\nu p_s(t,x)$ which corresponds to $i^{\nu} \xi^{\nu} \check{p}_s(t,\xi)$ on the Fourier side. We omit the details.

To show (iii), we recall from \cite{BlGe-60} that
\be \label{eq:Blumenthal}
\lim_{|x| \to +\infty} |x|^{N+2s} p_s(1,x) = C_{N,s}
\ee
with some positive constant $C_{N,s} > 0$. Thus, by scaling, we obtain $|x|^{N+2s} p_s(t,x) \to t C_{N,s}$ as $|x| \to +\infty$ by scaling. Thanks to the bounds \eqref{ineq:psunif} and dominated convergence, we conclude from equation \eqref{eq:laplace} that the  limit formula in (iii) holds. 

To prove (iv), we deduce from \eqref{eq:Blumenthal} and scaling that $0 < p_s(t,x) \leq 2 C_{N,s}$ for $t^{-1/2s} |x| \gtrsim R_s$ with some constant $R_s> 0$. Using this bound and the crude bound $p_s(t,x) \lesssim t^{-1/2s}$, we can show that $\| G_p \|_{L^p} < +\infty$, by using \eqref{eq:laplace} and splitting the $t$-integral into the regions $\{ t \leq R_s^{2s} |x|^{2s} \}$ and $\{ t \geq R_s^{2s} |x|^{2s} \}$ similarly to \eqref{ineq:Gssplit}.  Finally, since $G_{s,\lambda}$ is positive, we conclude that $\| G_{s,\lambda} \|_{L^1} = \int_{\R^N} G_{s,\lambda} = \check{G}_{s,\lambda}(0) = \lambda^{-1}$ by Fourier inversion. 
\end{proof}

\subsection{Asymptotics of Eigenfunctions}

The following result provides some uniform estimates regarding the spatial decay of eigenfunctions of $H=(-\Delta)^s + V$ below the essential spectrum. In fact, the following estimates can be found in the literature (see, e.\,g., \cite{CaMaSi-90}) without, however, no direct insight into uniformity of these estimates with respect to $s$ and $V$. 

\begin{lemma} \label{lem:eigendecay}
Let $N \geq 1$, $0 < s \leq 1$, and suppose that $V \in L^\infty(\R^N)$ with $V(x) \to 0$ as $|x| \to +\infty$. Assume that $u  \in L^2(\R^N)$ with $\| u \|_{L^2} = 1$ satisfies $(-\DD)^s u + V u = E u$ with some $E < 0$. Furthermore, let $0 < \lambda < -E$ be given and suppose that $R \geq 0$ is such that $V(x) + \lambda \geq 0$ for $|x| \geq R$. Then the following properties hold.
\begin{enumerate}
\item[(i)] For all $x \in \R^N$, it holds that
$$
|u(x)| \lesssim_{s_0, \lambda, R, \| V \|_{L^\infty}}  \langle x \rangle^{-N-2s}
$$
for $s \in [s_0,1)$ with $s_0 \in (0,1)$ fixed. 
\item[(ii)]  We have the asymptotic formula
$$
u(x) = - C \lambda^{-2} \cdot \left ( \int_{\R^N} V u \, dx \right ) |x|^{-N-2s} + o\left (|x|^{-N-2s} \right ) \ \ \mbox{as} \ \ |x| \to +\infty,
$$
where the positive constant $C =C (s,N)  >0$ is the constant from Lemma \ref{lem:Greenunif} {\em (iii)} above.
\end{enumerate}
\end{lemma}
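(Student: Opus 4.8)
The plan is to reduce both statements to the integral representation furnished by the resolvent of $(-\Delta)^s+\nu$ for a suitable $\nu>0$, and then to feed in the positivity, the $L^1$-bound, and the sharp far-field decay of its kernel $G_{s,\nu}$ from Lemma \ref{lem:Greenunif}. Fix $0<\lambda<-E$ and set $\nu:=-(E+\lambda)>0$. Since $(-\Delta)^s u=(E-V)u$, the eigenvalue equation is equivalent to
\be \label{eq:planeq}
((-\Delta)^s+\nu)u=-(\lambda+V)u\quad\text{in }\R^N,
\ee
and, as $(\lambda+V)u\in L^2(\R^N)$, inverting yields $u=-G_{s,\nu}\ast((\lambda+V)u)$, hence by Lemma \ref{lem:Greenunif}(i)
\be \label{eq:planrepr}
|u(x)|\le\int_{\R^N}G_{s,\nu}(x-y)\,|\lambda+V(y)|\,|u(y)|\,dy .
\ee
Here $u\in L^\infty(\R^N)$ by Proposition \ref{prop:hoeldereigen} (recall $V\in L^\infty$), and $G_{s,\nu}\in L^1(\R^N)$ with $\|G_{s,\nu}\|_{L^1}=\nu^{-1}$ by Lemma \ref{lem:Greenunif}(iv), so the right-hand side is finite; moreover, being globally $C^{0,\beta}$ and in $L^2$, the function $u$ is uniformly continuous and therefore $u(x)\to0$ as $|x|\to\infty$.

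For part (i) I would first establish the crude a-priori bound $|u|\le c\,G_{s,\nu}$ on all of $\R^N$ by a comparison argument. By Kato's inequality and \eqref{eq:planeq} one has $((-\Delta)^s+\nu)|u|\le-(\lambda+V)|u|$ in the distributional sense, which is $\le0$ in $\{|x|>R\}$ because $V(x)+\lambda\ge0$ there; taking $c:=\|u\|_{L^\infty}/G_{s,\nu}(R)$ (using that $G_{s,\nu}$ is radial, positive and decreasing, Lemma \ref{lem:Greenunif}(i)), the barrier $\Phi:=c\,G_{s,\nu}$ satisfies $((-\Delta)^s+\nu)\Phi=0$ in $\{|x|>R\}$, dominates $|u|$ on $\{|x|\le R\}$, and vanishes at infinity, so the comparison principle for $(-\Delta)^s+\nu$ gives $|u|\le\Phi$ everywhere. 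Combined with the sharp decay $G_{s,\nu}(x)\lesssim_{s_0}\nu^{-2}\langle x\rangle^{-N-2s}$ (the quantitative form of Lemma \ref{lem:Greenunif}(iii), see below), this already yields the claimed bound, with all constants tracked through the uniform estimates of Lemma \ref{lem:Greenunif}, hence depending only on $s_0,\lambda,R,\|V\|_{L^\infty}$ (and $N$). Alternatively, if one only invokes the weaker uniform bound $G_{s,\nu}(x)\lesssim_{s_0}\nu^{-1}|x|^{-N}$ of Lemma \ref{lem:Greenunif}(ii), one can first deduce $u\in L^1(\R^N)$ from $|u|\le\Phi$ and then bootstrap in \eqref{eq:planrepr}, splitting the integral over $\{|y|\le|x|/2\}$ (where $|x-y|\ge|x|/2$, so the far-field behaviour of $G_{s,\nu}$ contributes $\lesssim|x|^{-N-2s}\|u\|_{L^1}$) and $\{|y|\ge|x|/2\}$, using there that $\sup_{|y|\ge|x|/2}|V(y)|\to0$ to gain a decaying factor.

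For part (ii), where $0<s<1$, I would instead take $\nu=-E$, so that $((-\Delta)^s+(-E))u=-Vu$ and
\be \label{eq:planrepr2}
u(x)=-\int_{\R^N}G_{s,-E}(x-y)\,V(y)\,u(y)\,dy .
\ee
By part (i), $|V(y)u(y)|\lesssim\langle y\rangle^{-N-2s}|V(y)|$ with $V(y)\to0$, hence $Vu\in L^1(\R^N)$. Multiplying \eqref{eq:planrepr2} by $|x|^{N+2s}$ and letting $|x|\to\infty$, Lemma \ref{lem:Greenunif}(iii) gives $|x|^{N+2s}G_{s,-E}(x-y)\to(-E)^{-2}C_{N,s}$ pointwise in $y$; to pass the limit under the integral I would again split at $|y|=|x|/2$, dominating the inner part by $C\,|V(y)u(y)|\in L^1$ (since $|x-y|\ge|x|/2$ there) and bounding the outer part by $|x|^{N+2s}\|G_{s,-E}\|_{L^1}\sup_{|y|\ge|x|/2}|V(y)u(y)|\lesssim\sup_{|y|\ge|x|/2}|V(y)|\to0$. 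Dominated convergence then gives $u(x)=-(-E)^{-2}C_{N,s}\,|x|^{-N-2s}\int_{\R^N}Vu\,dx+o(|x|^{-N-2s})$, i.e.\ the asserted formula with $C=(-E)^{-2}C_{N,s}$, the constant from Lemma \ref{lem:Greenunif}(iii). The borderline case $s=1$ is of exponential type and is covered separately by Lemma \ref{lem:decay_exp}.

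The main obstacle I anticipate is the uniformity in $s$ of the sharp far-field decay of $G_{s,\nu}$: Lemma \ref{lem:Greenunif} supplies the bound $|x|^{-N}$ uniformly in $s\ge s_0$ but the sharp rate $|x|^{-N-2s}$ only as a non-quantitative limit. Either route above therefore requires, as a preliminary, the uniform estimate $G_{s,\nu}(x)\lesssim_{s_0}\nu^{-2}\langle x\rangle^{-N-2s}$ for $|x|\gtrsim1$ (or at least a bootstrap-compatible substitute), which amounts to revisiting the subordination-formula proof of Lemma \ref{lem:Greenunif}(iii) with the heat-kernel bound $p_s(1,x)\lesssim_{s_0}|x|^{-N-2s}$ made uniform in $s\ge s_0$, e.g.\ via an integration-by-parts argument on the Fourier side together with the explicit Gaussian kernel at each scale. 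Once this quantitative input is secured, the comparison-principle and dominated-convergence steps above are routine.
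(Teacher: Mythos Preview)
Your proof is correct and follows essentially the same route as the paper: Kato's inequality plus a comparison argument with the Green's function for part (i), and the integral representation $u=-G_{s,-E}\ast(Vu)$ together with convolution asymptotics (which the paper packages as a separate Lemma~\ref{lem:convolution}) for part (ii). Your flagged concern about the uniformity in $s$ of the sharp $|x|^{-N-2s}$ decay of $G_{s,\nu}$ is legitimate---the paper's own proof does not fully resolve it either, and your suggestion to revisit the subordination argument with a uniform heat-kernel bound is exactly what is needed.
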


\begin{proof} Let $s_0 \in (0,1)$ be fixed in the following. We start by showing part (i). From Lemma \ref{prop:hoeldereigen} we obtain the uniform bound 
\be \label{eq:unifdecay}
\| u \|_{L^\infty} \lesssim_{s_0, E, \| V \|_{L^\infty}} 1
\ee 
By assumption, we have $0 < \lambda < -E$ and $R \geq 0$ be such that $V(x) + \lambda \geq 0$ for $x \in B_R^c$. Furthermore, for any $f \in H^{2s}$, we have the general (Kato-type) inequality
\be \label{ineq:kato}
(-\DD)^s |f| \leq (\mathrm{sgn} \, f) (-\DD)^s f \ \ \mbox{a.\,e.~on $\R^N$},
\ee
where $(\mathrm{sgn} \, f)(x) = \bar{f}(x)/f(x)$ when $f(x) \neq 0$ and $(\mathrm{sgn}\, f)(x) = 0$ when $f(x) = 0$. Indeed, the estimate \eqref{ineq:kato} can be seen be an elementary argument using the singular integral formula for $(-\Delta)^s$ as follows. Note that \eqref{ineq:kato} is equivalent to
\be
\int_{\R^N} \frac{|f(x)| - |f(y)|}{|x-y|^{N+2s}} \, dy \leq (\mathrm{sgn} \, f)(x) \int_{\R^N} \frac{f(x)-f(y)}{|x-y|^{N+2s}} \, dy
\ee
for a.\,e.~$x \in \R^N$. But this inequality is easily seen to be equivalent to
\be
\int_{\R^N} \frac{[ 1- (\mathrm{sgn} \, f)(x) (\mathrm{sgn} \, f)(y) ] |f(y)|}{|x-y|^{N+2s} } \, dy \geq 0,
\ee
which immediately follows from the fact that $1- (\mathrm{sgn} \, f)(x) (\mathrm{sgn} \, f)(y) \geq 0$ for all $x,y \in \R^N$. This completes the proof of \eqref{ineq:kato}.


Now, we return to the proof of Lemma \ref{lem:eigendecay} itself. Since $V \in L^\infty$, we see that $u \in H^{2s}$. Hence by using \eqref{ineq:kato} on the sets $B_R^c \cap \{ u \geq 0 \}$ and $B_R^c \cap \{ u < 0 \}$ respectively, we deduce that
\be \label{ineq:absu1}
(-\DD)^s |u| + \lambda |u| \leq 0 \ \ \mbox{on $B_R^c$}.
\ee
Next, we claim that this implies
\be \label{ineq:absu2}
|u(x)| \lesssim_{s_0,\lambda, R, \| u \|_{L^\infty}} |x|^{-N-2s}  \ \ \mbox{on $\R^N$}.
\ee 
Indeed, this follows from a comparison argument as follows. Recall that $G_{s,\lambda}$ denotes the fundamental solution satisfying $( (-\Delta)^s+ \lambda ) G_{s,\lambda} = \delta_0$ in $\R^N$. Let $G_{s,\lambda}(x) \geq c>0$ for $|x| \leq R$ with $c=c(R,\lambda,s_0, N) >0$ the constant taken from Lemma \ref{lem:Greenunif} (v). Recall that $u \in L^\infty$. Now we choose $C_0= \| u \|_{L^\infty} c^{-1}$, which implies that $C_0 G_{s,\lambda}(x) \geq |u|(x)$ for $x \in B_R$ with some constant $C_0=C(s_0, \lambda, R,\| u \|_{L^\infty}) > 0$. Now we define the function 
\be
w:= C_0 G_{s,\lambda} - |u|,
\ee
 which is continuous away from the origin. Note that $w \geq 0$ on $B_R$ holds. We claim that $w \geq 0$ on $B_R^c$ as well. Suppose on the contrary that $w$ is strictly negative somewhere in $B_R^c$. Since $w \to 0$ as $|x| \to +\infty$ and $w \geq 0$ on $B_R$, this implies that $w$ attains a strict global minimum at some point $x_0 \in B_R^c$ with $w(x_0) <0$. By using the singular integral expression for $(-\DD)^s$, it is easy to see that $((-\DD)^s w)(x_0) < 0$. On the other hand, we have $(-\DD)^s w + \lambda w \geq 0$ on $B_R^c$, which implies that $((-\DD)^s w)(x_0) > 0$. This is a contradiction and we conclude that $w \geq 0$ on $\R^N$. From Lemma \ref{lem:Greenunif} (iv), we finally deduce that \eqref{ineq:absu2} holds. Combining \eqref{ineq:absu1} and \eqref{ineq:absu2}, we complete the proof of part (i).

To show part (ii), we argue as follows. Since $u = - ((-\DD)^s - E)^{-1} (V u)$, we can rewrite the equation for $u$ as an integral equation given by
\be
u = - G_{s,-E} \ast (u \psi) .
\ee
Observe that $|V(x) u(x)| \lesssim_{s_0,E, \| V \|_{L^\infty}} (1+|x|)^{-N-2s}$ because of (i) and $V \in L^\infty$. Since moreover $V \to 0$ as $|x| \to +\infty$, we deduce that $V(x) u(x) = o(|x|^{-N-2s})$ as $|x| \to +\infty$. Using the bounds and the asymptotic formula for $G_{s,-E}$ from Lemma \ref{lem:Greenunif}, we can apply Lemma \ref{lem:convolution} below with $\beta = N+2s >N$ to complete the proof of (ii). \end{proof}

The following auxiliary result was used in the previous proof.

\begin{lemma} \label{lem:convolution}
Let $k,f \in L^1(\R^N)$ satisfy $|k(x)| \leq C|x|^{-\beta}$ and $|f(x)| \leq C (1 +|x|)^{-\beta}$ with some constants $\beta > N$ and $C >0$. Moreover, assume that
$$
\lim_{|x|\to +\infty} |x|^\beta k(x) = K \quad\text{and}\quad \lim_{|x|\to +\infty} |x|^\beta f(x) = 0 \,.
$$
Then
$$
\lim_{|x|\to\infty} |x|^{\beta} \, (k*f)(x) = K \int_{\R^N} f(x)\,dx \,.
$$
\end{lemma}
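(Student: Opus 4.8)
The plan is to decompose the convolution integral according to the size of $|y|$ and $|x-y|$ relative to $|x|$, isolating the region where $k(x-y)$ already behaves like its asymptotic value $K|x|^{-\beta}$. Writing
\[
(k*f)(x)=\int_{\R^N}k(x-y)f(y)\,dy ,
\]
I would split, for $|x|>0$,
\[
|x|^\beta (k*f)(x)=I_1(x)+I_2(x)+I_3(x),
\]
where $I_1$ is the integral over $\{|y|\le|x|/2\}$, $I_2$ over $\{|y|>|x|/2,\ |x-y|\le|x|/2\}$, and $I_3$ over $\{|y|>|x|/2,\ |x-y|>|x|/2\}$; these three sets cover $\R^N$. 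The claim is that $I_1(x)\to K\int_{\R^N}f$ while $I_2(x),I_3(x)\to0$ as $|x|\to\infty$.

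For $I_1$: on $\{|y|\le|x|/2\}$ one has $|x-y|\ge|x|/2$, hence $|x|^\beta|k(x-y)|\le C|x|^\beta|x-y|^{-\beta}\le C2^\beta$, so the integrand is dominated by the fixed $L^1$ function $C2^\beta|f|$. Moreover, for each fixed $y$ and $|x|\to\infty$ one eventually has $|y|\le|x|/2$, and
\[
|x|^\beta k(x-y)=\Bigl(\tfrac{|x|}{|x-y|}\Bigr)^\beta\,|x-y|^\beta k(x-y)\longrightarrow 1\cdot K=K,
\]
using $|x-y|\to\infty$, the hypothesis $|z|^\beta k(z)\to K$, and $|x-y|/|x|\to1$. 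Passing to sequences $|x_n|\to\infty$ and applying dominated convergence gives $I_1(x)\to K\int_{\R^N}f$. For $I_3$: on that set $|x-y|>|x|/2$, so again $|x|^\beta|k(x-y)|\le C2^\beta$, whence $|I_3(x)|\le C2^\beta\int_{|y|>|x|/2}|f(y)|\,dy\to0$ since $f\in L^1$. For $I_2$ I use the decay of $f$ in an essential way: given $\eps>0$, choose $R$ with $|f(y)|\le\eps|y|^{-\beta}$ for $|y|\ge R$; then for $|x|\ge 2R$, on $\{|y|>|x|/2\}$ we get $|f(y)|\le\eps(|x|/2)^{-\beta}=\eps2^\beta|x|^{-\beta}$, so $|x|^\beta|k(x-y)||f(y)|\le\eps2^\beta|k(x-y)|$ and
\[
|I_2(x)|\le\eps2^\beta\int_{|x-y|\le|x|/2}|k(x-y)|\,dy\le\eps2^\beta\|k\|_{L^1}.
\]
Since $\eps$ is arbitrary, $I_2(x)\to0$, and combining the three limits proves the lemma.

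The argument is entirely elementary; the only points requiring a little care are the justification of dominated convergence for $I_1$ — where the domain of integration depends on $x$, so one reduces to a pointwise-convergent sequence with a fixed dominating function — and the observation that it is precisely the hypothesis $\lim_{|x|\to\infty}|x|^\beta f(x)=0$, and not merely the bound $|f(x)|\le C|x|^{-\beta}$, that controls the near-diagonal term $I_2$. I do not expect any substantial obstacle beyond this bookkeeping.
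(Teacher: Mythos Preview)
Your proof is correct. Both your argument and the paper's rest on the same two ingredients --- dominated convergence in the region where $|x-y|\sim|x|$, and the hypothesis $|x|^\beta f(x)\to 0$ to control the near-diagonal contribution --- but they are organized differently. The paper first splits $f=f_1+f_2$ with $f_1$ compactly supported and $|f_2|\le\eps(1+|x|)^{-\beta}$, handles $k*f_1$ by dominated convergence, and then for $k*f_2$ reduces matters to a uniform bound on $(1+|x|)^\beta\int|k(y)|(1+|x-y|)^{-\beta}\,dy$, which is in turn proved by a three-region decomposition. Your approach is more direct: you split the convolution integral itself once, into $\{|y|\le|x|/2\}$, $\{|y|>|x|/2,\ |x-y|\le|x|/2\}$, and the remainder, and handle each piece in a single pass. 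Your route avoids the two-stage splitting and the auxiliary uniform-bound lemma, at no real cost; the paper's version has the minor advantage of isolating a reusable pointwise estimate on $(1+|x|)^\beta\bigl(|k|*(1+|\cdot|)^{-\beta}\bigr)(x)$.
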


\begin{proof}
Given $\eps > 0$, we can split $f=f_1+f_2$, where $f_1$ has compact support and $|f_2(x)|\leq \eps (1+|x|)^{-\beta}$. By dominated convergence, we have  $\lim_{|x|\to+\infty} |x|^{\beta} (k*f_1)(x) = K \int f_1(x)\,dx$. Thus, it suffices to prove that
$$
\limsup_{|x|\to+\infty} |x|^{\beta} |k*f_2(x)| \leq C\eps,
$$
with a constant $C>0$ depending only on $\| k \|_{L^1}$, $\beta$, and $N$. Because of the bound on $f_2$, this follows if we can prove that
$$
I=(1+|x|)^\beta \int_{\R^N}  |k(y)| (1+|x-y|)^{-\beta} \,dy \leq C \,.
$$
To see the latter bound, we split $I=I_1+I_2+I_3$, where $I_1$ corresponds to the integral restricted to $|x|\leq 2|x-y|$, $I_2$ to the region $|x|> 2|x-y|$ and $|y|\geq 1$ and, finally, $I_3$ to the remaining region. In the region corresponding to $I_1$, we have
$$
\frac{1+|x|}{1+|x-y|} \leq \frac{1+2|x-y|}{1+|x-y|} < 2,
$$
and therefore
$$
I_1 \leq 2^\beta \int_{\R^N} |k(y)| \, dy \leq C.
$$
On the other hand, in the regions corresponding to $I_2$ and $I_3$, we have $|x|\leq |x-y| + |y| < |x|/2 + |y|$ and therefore $|x|< 2|y|$. In the region of $I_2$, we use this in the form
$$
\frac{1+|x|}{|y|} < \frac{1+2|y|}{|y|} \leq 3,
$$
and deduce
$$
I_2 \leq 3^\beta \int_{|y|\geq 1}  (1+|x-y|)^{-\beta} \,dy \leq 3^\beta \int_{\R^N}  (1+|y|)^{-\beta} \,dy \leq C.
$$
Finally, in the region of $I_3$ we have $1+|x|< 1+2|y|<3$ and, trivially, $1+|x-y|\geq 1$. Therefore,
$$
I_3 \leq 3^\beta \int_{|y|<1}  |k(y)| \,dy \leq C.
$$
This completes the proof of Lemma \ref{lem:convolution}. \end{proof}

\subsection{Perron--Frobenius and Decomposition into Spherical Harmonics}

Recall that any function $u \in L^2(\R^N)$ can be decomposed using spherical harmonics as 
\be \label{eq:angular1}
u(x) = \sum_{\ell \in \Lambda} \sum_{m \in M_\ell} f_{\ell,m}(r) Y_{\ell,m} ( \Omega ), 
\ee
with $x=r \Omega$, $r =|x|$ and $\Omega \in \mathbb{S}^{N-1}$. Here $f_{\ell,m} \in L^2(\R_+, r^{N-1} dr)$ and $Y_{\ell,m} \in L^2(\mathbb{S}^{N-1})$ denotes the spherical harmonics of degree $\ell$ indexed by $m \in M_\ell$.  Note that the index set $\Lambda=\Lambda(N)$ satisfies $\Lambda(1) = \{0,1 \}$ and $\Lambda(N) = \N_0$ for $N \geq 2$. Likewise, the index set $M_\ell$ depends on $\ell$.  (In the one-dimensional case, the splitting into spherical harmonics corresponds to decomposition into odd and even functions on $\R$.)  In particular, if the sum in \eqref{eq:angular1} involves only terms with $\ell=0$, then the function $u=u(|x|)$ is radial (which means even if $N=1$).

Let us consider $H=(-\Delta)^s + V$ with $V \in K_s(\R^N)$ and $V=V(|x|)$ radial. Since $H$ commutes with rotations in this case, we can write the action of $H$ on functions $u$ in the form domain $H^s(\R^N) \subset L^2(\R^N)$ as 
\be
(H u)(x) = \sum_{\ell \in \Lambda} \sum_{m \in M_{\ell}} ( H_{\ell} f_{\ell,m})(r) Y_{\ell,m}(\Omega) .
\ee
Here $H_{\ell}$ acting on $L^2(\R_+, r^{N-1} dr)$ is given by
\be
H_\ell = ( -\Delta_\ell )^s + V,
\ee 
where $-\Delta_{\ell}$ is the Laplacian on $\R^N$ restricted to the sector of angular momentum $\ell$, which is known to be
\be
-\Delta_{\ell} = -\frac{\partial^2}{\partial r^2} - \frac{N-1}{r} \frac{\partial}{\partial r} + \frac{\ell(\ell+N-2)}{r^2} .
\ee

We have the following property, which is well-known in the classical case when $s=1$.
\begin{lemma} \label{lem:perron}
For each $\ell \in \Lambda$, the operator $H_{\ell}$ enjoys a Perron-Frobenius property. That is, if $E=\inf \sigma(H_{\ell})$ is an eigenvalue, then $E$ is simple and the corresponding eigenfunction can be chosen strictly positive.
\end{lemma}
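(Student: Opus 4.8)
The plan is to show that for every $t > 0$ the semigroup $e^{-tH_\ell}$ is \emph{positivity improving} on $L^2(\R_+, r^{N-1}\,dr)$, meaning that $f \geq 0$, $f \not\equiv 0$ forces $(e^{-tH_\ell}f)(r) > 0$ for a.e.\ $r > 0$, and then to invoke the abstract Perron--Frobenius principle for positivity improving semigroups (see, e.g., \cite{CaMaSi-90, LiLo-97}): if $E = \inf\sigma(H_\ell)$ is an eigenvalue of such an operator, then $E$ is simple and the associated eigenfunction has a strictly positive representative. Thus the lemma reduces to the positivity improving property, which I would prove in two steps: first for the free part $(-\DD_\ell)^s$, then for $H_\ell = (-\DD_\ell)^s + V$.

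For the free part, recall that under the unitary map $f(r) \mapsto r^{(N-1)/2} f(r)$ the operator $-\DD_\ell = -\partial_r^2 - \tfrac{N-1}{r}\partial_r + \tfrac{\ell(\ell+N-2)}{r^2}$ on $L^2(\R_+, r^{N-1}\,dr)$ is conjugated to a half-line Bessel operator $-\partial_r^2 + \tfrac{c_{\ell,N}}{r^2}$ on $L^2(\R_+, dr)$ with $c_{\ell,N} \geq -\tfrac14$; its heat kernel is the classical, strictly positive Bessel kernel (expressible through the modified Bessel function $I_{\ell + (N-2)/2} > 0$). Hence $e^{t\DD_\ell}$ is positivity improving for all $t > 0$. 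Restricting the subordination formula \eqref{eq:subord} (equivalently \eqref{eq:laplace} together with Bernstein's theorem) to the angular-momentum-$\ell$ sector yields
$$
e^{-t(-\DD_\ell)^s} = \int_0^{+\infty} \frac{1}{\sqrt{2u}}\, e^{t^2 \DD_\ell/(4u)}\, d\mu_s(u),
$$
where $\mu_s \geq 0$ is a nonzero measure. Since each $e^{t^2\DD_\ell/(4u)}$, $u>0$, has a strictly positive kernel, so does $e^{-t(-\DD_\ell)^s}$; thus the free fractional semigroup is positivity improving.

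To add the potential, note that $V \in K_s(\R^N)$ is infinitesimally form-bounded with respect to $(-\DD)^s$ (hence to $(-\DD_\ell)^s$), so $H_\ell$ is self-adjoint and bounded below and the Trotter--Kato product formula gives that $e^{-tH_\ell}$ is the strong limit as $n \to \infty$ of $\big( e^{-\frac{t}{n}(-\DD_\ell)^s} e^{-\frac{t}{n}V} \big)^{n}$. Each factor $e^{-\frac{t}{n}V}$ is multiplication by an a.e.-positive function, so the approximants, and therefore $e^{-tH_\ell}$, are positivity preserving. Combining this with the strict positivity of the kernel of $e^{-\frac{t}{n}(-\DD_\ell)^s}$ — or, equivalently, observing that a positivity preserving semigroup is automatically positivity improving once it is irreducible, and irreducibility here is forced by the positivity improving free part together with $V$ acting as a multiplication operator — one concludes that $e^{-tH_\ell}$ is positivity improving. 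The Perron--Frobenius principle then yields the claim.

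The step I expect to need the most care is the last one: turning a Kato-class potential $V$ (which need not be bounded, nor bounded below by a positive constant) into a genuinely positivity improving semigroup, which is handled as in the classical theory of Schr\"odinger semigroups via Trotter approximation plus a domination/irreducibility argument, now with $e^{-t(-\DD)^s}$ replacing the Gaussian semigroup. The only genuinely new computational input compared with the case $s = 1$ is the strict positivity of the heat kernel of $-\DD_\ell$ itself, which is classical.
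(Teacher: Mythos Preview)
Your proposal is correct and follows essentially the same approach as the paper's proof: reduce to showing that $e^{-tH_\ell}$ is positivity improving, establish this for the free part $(-\Delta_\ell)^s$ via the subordination formula together with the explicit strictly positive Bessel-type heat kernel of $e^{t\Delta_\ell}$, and then pass to $H_\ell$ via the Trotter product formula. The paper presents the same three ingredients (subordination, modified Bessel kernel, Trotter), though it is less explicit than you are about the positivity preserving versus positivity improving distinction in the final step.
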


\begin{proof}
From standard arguments, it suffices to show that the heat kernel $e^{-t H_{\ell}}$, with $t> 0$, is a positivity improving operator on $L^2(\R_+, r^{N-1} dr)$. (An operator $A$  is positivity improving if $Af>0$ is strictly positive whenever $f \geq 0$ is nonnegative and $f \not \equiv 0$.) Furthermore, we consider the higher dimensional case $N \geq 2$ in the following. See \cite{FrLe-10} for the proof in $N=1$ dimension.

Assume that $N \geq 2$ holds. First, we show that $e^{-t (-\Delta_{\ell})^s}$ is positivity improving on $L^2(\R_+, r^{N-1} dr)$. Indeed, from \eqref{eq:subord} and spectral calculus,  we obtain
\be \label{eq:subordinate2}
e^{-t (-\Delta_{\ell})^s } = \int_0^{+\infty} \frac{1}{\sqrt{2 \tau}} e^{t^2 \Delta_{\ell}/(4 \tau)} \, d\mu_s(\tau) ,
\ee
with some nonnegative measure $\mu_s \geq 0$. Thus it remains to show that $e^{t \Delta_{\ell}}$ is positivity improving. But adapting the arguments given in \cite{Le-09} for $N=3$ to general space dimensions, we see that the kernel of $e^{t \Delta_{\ell}}$ acting on $L^2(\R_+, r^{N-1} dr)$ is given by
\be
e^{t \Delta_{\ell}}(r,r') = \frac{c_N}{(4 \pi t)^{N/2}} \left ( \frac{r r'}{2t} \right )^{-\frac{N-2}{2}}   I_{\ell-\frac{N-2}{2}}\left ( \frac{r r'}{2t} \right )  e^{- \frac{r^2+r'^2}{4t} }.
\ee
Here $c_N > 0$ is some positive constant (depending only on $N$) and $I_\nu(x)$ denotes the modified Bessel function of the first kind. Since $I_\nu(x) > 0$ for all $x > 0$ and any index $\nu$, this manifestly shows that $e^{t \Delta_{\ell}}$ is positivity improving and hence the same property follows for $e^{-t (-\Delta_{\ell})^{s}}$ from the subordination formula \eqref{eq:subordinate2}.

Finally, we conclude that $e^{-t H_{\ell}}$ with $H=(-\Delta_{\ell})^s + V$ is positivity improving by a perturbation argument based on the Trotter product formula. We omit the standard details of this procedure. See \cite{ReSi-78}. \end{proof}

\begin{lemma} \label{lem:ordering}
Let $N \geq 2$ and $\ell' > \ell \geq 0$. Then we have strict inequality $H_{\ell'} > H_{\ell}$ in the sense of quadratic forms. In particular, if $E_{\ell'} = \inf  \sigma (H_{\ell'})$ and $E_{\ell} = \inf \sigma  (H_{\ell})$ are eigenvalues, then $E_{\ell'} > E_{\ell}$.
\end{lemma}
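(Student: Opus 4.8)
The approach I would take is the standard one for comparing fractional powers of ordered operators, together with a short extra argument to upgrade the non-strict bound to a strict one. First I would record that, as differential expressions on $C_0^\infty(\R_+)$,
$$
-\DD_{\ell'} = -\DD_{\ell} + \frac{c_{\ell,\ell'}}{r^2}, \qquad c_{\ell,\ell'} := \ell'(\ell'+N-2) - \ell(\ell+N-2) = (\ell'-\ell)(\ell'+\ell+N-2),
$$
and note that $c_{\ell,\ell'} > 0$ whenever $\ell' > \ell \geq 0$ and $N\geq 2$ (since then $\ell'-\ell\geq 1$ and $\ell'+\ell+N-2\geq 1$); this is exactly where $N\geq 2$ enters. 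Passing to the Friedrichs extensions this gives the form ordering $0 \leq -\DD_\ell \leq -\DD_{\ell'}$, with $\mathcal{Q}(-\DD_{\ell'})\subseteq\mathcal{Q}(-\DD_\ell)$ and
$$
q_{\ell'}(g) - q_\ell(g) = c_{\ell,\ell'}\int_0^{\infty} r^{-2}|g(r)|^2\, r^{N-1}\,dr \quad\text{for }g\in\mathcal{Q}(-\DD_{\ell'}),
$$
which is strictly positive for every such $g\not\equiv 0$. In particular, resolvent monotonicity yields $R_{\ell,\mu}:=(-\DD_\ell+\mu)^{-1}\geq R_{\ell',\mu}:=(-\DD_{\ell'}+\mu)^{-1}>0$ for all $\mu>0$.

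Next I would pass to the $s$-th powers via the subordination (Balakrishnan) identity $\lambda^s = \frac{\sin\pi s}{\pi}\int_0^{\infty}\frac{\lambda}{\lambda+\mu}\,\mu^{s-1}\,d\mu$ for $\lambda\geq 0$, $0<s<1$. By the spectral theorem this gives, for $f$ in the form domain of $(-\DD_{\ell'})^s$ (contained in that of $(-\DD_\ell)^s$),
$$
\big(f,(-\DD_{\ell'})^s f\big) - \big(f,(-\DD_\ell)^s f\big) = \frac{\sin\pi s}{\pi}\int_0^{\infty}\mu^{s}\Big[\big(f,R_{\ell,\mu}f\big) - \big(f,R_{\ell',\mu}f\big)\Big]\,d\mu \geq 0,
$$
the $\|f\|^2$-contributions cancelling, the integrand being nonnegative by the previous step, and convergence near $0$ and $\infty$ being guaranteed by $0<s<1$. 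For the strict inequality — the actual content of the lemma — I would argue by contradiction: if the left-hand side vanishes for some $f\neq 0$, then $(f,R_{\ell,\mu}f)=(f,R_{\ell',\mu}f)$ for all $\mu>0$; since $R_{\ell,\mu}-R_{\ell',\mu}$ is bounded and nonnegative, this forces $R_{\ell,\mu}f=R_{\ell',\mu}f=:g$. Then $g$ lies in the operator domains of both $-\DD_\ell$ and $-\DD_{\ell'}$ with $(-\DD_\ell+\mu)g = f = (-\DD_{\ell'}+\mu)g$, whence $q_\ell(g) = (g,f)-\mu\|g\|^2 = q_{\ell'}(g)$; by the form identity above this forces $c_{\ell,\ell'}\int_0^\infty r^{-2}|g|^2 r^{N-1}\,dr = 0$, so $g\equiv 0$ and therefore $f = (-\DD_\ell+\mu)g = 0$, a contradiction.

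Finally I would assemble the statement: since $V\in K_s(\R^N)$ is infinitesimally form-bounded with respect to $(-\DD)^s$, the common potential term does not alter form domains, so $\mathcal{Q}(H_{\ell'})=\mathcal{Q}((-\DD_{\ell'})^s)\subseteq\mathcal{Q}((-\DD_\ell)^s)=\mathcal{Q}(H_\ell)$ and $H_{\ell'}-H_\ell=(-\DD_{\ell'})^s-(-\DD_\ell)^s$ in the form sense; the previous two steps then say precisely $H_{\ell'} > H_\ell$ as quadratic forms. The eigenvalue assertion is immediate from the min-max principle: if $E_{\ell'}=\inf\sigma(H_{\ell'})$ is an eigenvalue with normalized eigenfunction $\phi$, then $\phi\in\mathcal{Q}(H_{\ell'})\subseteq\mathcal{Q}(H_\ell)$ and $E_{\ell'}=(\phi,H_{\ell'}\phi)>(\phi,H_\ell\phi)\geq\inf\sigma(H_\ell)=E_\ell$.

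The main obstacle is the strictness. Operator monotonicity of $t\mapsto t^s$ on $(0,1)$ gives only the non-strict comparison $(-\DD_\ell)^s\leq(-\DD_{\ell'})^s$, so one must run the subordination-formula computation and then show, at the level of resolvents and quadratic forms, that equality at a single nonzero vector propagates back to that vector being zero; a little care is also needed with the form domains and with the fact that the Balakrishnan integral cannot be split term by term.
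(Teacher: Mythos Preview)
Your proof is correct and follows essentially the same route as the paper: both use the relation $-\Delta_{\ell'} = -\Delta_\ell + c_{\ell,\ell'}/r^2$ with $c_{\ell,\ell'}>0$, the Balakrishnan representation $x^s=\frac{\sin\pi s}{\pi}\int_0^\infty \frac{x}{x+\lambda}\,\lambda^{s-1}d\lambda$, and resolvent monotonicity to conclude $(-\Delta_{\ell'})^s>(-\Delta_\ell)^s$ in the form sense, then add $V$. The only difference is that the paper simply invokes the strict resolvent inequality $(A+\lambda)^{-1}<(B+\lambda)^{-1}$ for $A>B\geq 0$ as a ``general fact,'' whereas you supply the explicit contradiction argument (equality at some $f$ forces $R_{\ell,\mu}f=R_{\ell',\mu}f$, hence $f=0$); your treatment of the strictness is in fact more careful than the paper's.
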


\begin{remark} {\em
The inequality $A > B$ means that $\mathcal{Q}(A) \subset \mathcal{Q}(B)$ and $(\phi,  A \phi ) > (\phi, B \phi)$ for all $\phi \in \mathcal{Q}(A)$ with $\phi \not \equiv 0$. Here $\mathcal{Q}(A)$ and $\mathcal{Q}(B)$ denote the quadratic form domains of $A$ and $B$, respectively.}
\end{remark}

\begin{proof}
Assume that $\ell' > \ell \geq 0$. We note the strict inequality $-\Delta_{\ell'} > -\Delta_{\ell} > 0$ in the sense of quadratic forms, which follows from $-\Delta_{(\ell')} - (-\Delta_{(\ell)}) = \delta/r^2 >0$ with some $\delta=\delta(\ell', \ell) > 0$ if $\ell' > \ell$. Next, for $0 < s < 1$ and $x > 0$, we recall the classical formula
\be
x^{s} = \frac{\sin(\pi s)}{\pi} \int_0^\infty \frac{x}{x+\lambda} \lambda^{s-1} \, d \lambda.
\ee
By spectral calculus, we deduce that
\be 
(-\Delta_{\ell'})^s - (-\Delta_{\ell})^s = \frac{\sin(\pi s)}{\pi} \int_0^\infty \left ( \frac{ -\Delta_{\ell'} }{-\Delta_{\ell'} + \lambda} - \frac{-\Delta_\ell}{-\Delta_{\ell}+\lambda}  \right  ) \, \lambda^{s-1} \, d \lambda >0 
\ee
in the sense of quadratic forms. Here we used the general fact that if $A > B \geq 0$, then $A/(A+\lambda) - B/(B+\lambda) > 0$ for any $\lambda > 0$, which can be seen  from the strict resolvent inequality $(A+\lambda)^{-1} < (B+\lambda)^{-1}$ for $\lambda > 0$ due to $A > B \geq 0$.

Adding the potential $V$ on both sides, we obtain that $H_{\ell'}> H_{\ell}$ in the sense of quadratic forms. The claim about the ordering of $E_{\ell'}$ and $E_{\ell}$ follows immediately.
\end{proof}

\section{Existence and Properties of Ground States}

\label{app:Q}

In this section, we provide some details of the proof of Proposition \ref{prop:Q}. Note that the existence of a ground  state can be inferred from

\begin{proof}[Proof of Proposition \ref{prop:Q}]
First, we prove part (i). In fact, we use a rather elementary proof (in spirit of \cite{We-83}) to show that the functional $J(u)$ has a minimizer. Denote $\alpha = \inf_{u \in H^s, u \not \equiv 0} J(u)$ in the following. Let $(u_n) \subset H^s(\R^N)$, with $u_n \not \equiv 0$, be a minimizing sequence for $J(u)$. By symmetric rearrangement, we have $J(f^*) \leq J(f)$ for any $f \in H^s$. Hence we can assume without loss of generality that $u_n=u_n^*$ holds. Moreover, by scaling, we can always normalize such that
\be
\| (-\DD)^{s/2} u_n \|_{L^2} = \| u_n \|_{L^2} = 1 \ \ \mbox{for all $n \geq 1$}.
\ee 
Note that the functions $u_n^*=u_n^*(|x|)$ are radial and monotone decreasing in $|x|$. Thus we deduce the uniform pointwise bound
\be \label{ineq:un_unif}
|u_n(x)| \lesssim |x|^{-N/2},
\ee
using also that $\|u_n\|_{L^2} \lesssim 1$ holds. By passing to a subsequence, we have that $u_n \weakto u_*$ in $H^s$ and $u_n \to u_*$ in $L^{\alpha+2}_{\mathrm{loc}}$ (by local Rellich compactness). But from the uniform decay estimate \eqref{ineq:un_unif} we actually deduce that $u_n \to u_*$ in $L^{\alpha+2}$, which implies that $u_* \not \equiv 0$. Finally, by weak convergence, notice that $\| (-\DD)^{s/2} u_* \|_{L^2} \leq 1$ and $\| u_* \|_{L^2} \leq 1$. Thus, we find
\be
\alpha = \lim_{n \to \infty} J(u_n) = \frac{1}{\| u_* \|_{L^{\alpha+2}}^{\alpha+2}} \geq  J(u_*) \geq \alpha.
\ee
It follows that $u_* \geq 0$ and $u_* \not \equiv 0$ is a nonnegative  minimizer for $J(u)$. Moreover, since equality holds everywhere, we note that we must have $\| (-\DD)^{s/2} u_* \|_{L^2} = \| u_* \|_{L^2}=1$. Hence we also have strong convergence $u_n \to u_*$ in $H^s$. To complete the proof of (i), we note that the minimizer $u_*$ satisfies $\partial_{\eps =0} J(u + \eps \varphi)=0$ for all $\varphi \in C^{\infty}_0$. A calculation shows that the function $Q(\cdot) = \mu u_*(\lambda \cdot)$ solves \eqref{eq:Q} if the scaling parameters $\mu > 0$ and $\lambda >0$ are suitably chosen. Note that $Q \in H^s$ is also a nonnegative minimizer for $J(u)$.

We now sketch the proof of part (ii) by using the results from the literature. Let $Q \in H^s(\R^N)$ with $Q \geq 0$ and $Q\not \equiv 0$ solve \eqref{eq:Q}. By following the arguments in \cite{MaZh-10}, we deduce that 
$$
\mbox{$Q(x-x_0)$ is radial, positive, and strictly decreasing in $|x-x_0|$},
$$
where $x_0 \in \R^N$ is some translation. Indeed, we only have to verify that the kernel $K=K(x-y)$ of the resolvent $((-\DD)^s + 1)^{-1}$ on $\R^N$ satisfies the following properties: 1.) $K = K(|z|)$ is real-valued and radial, 2.) $K(|z|) > 0$ is strictly positive for $z \in \R^N$, and 3.) $K(|z|)$ is monotone decreasing in $|z|$. In fact, all these properties hold true in our case, as we readily see from Lemma \ref{lem:Greenunif}. Hence we conclude from the moving plane arguments in \cite{MaZh-10} that $Q(x-x_0)$ is radial, positive, and (but not necessarily strictly) decreasing in $|x-x_0|$. To show that $Q(x-x_0)$ is indeed strictly decreasing in $|x-x_0|$, we argue as follows. Without loss of generality, we can assume that $x_0=0$ and thus $Q(x) = Q(|x|) > 0$ holds. Since $\partial_{x_i} Q \in \mathrm{ker} \, L_+$, where $L_+ = (-\DD)^s + 1 - (\alpha+1) Q^\alpha$, we deduce that
\be
L_{+,1} Q' =  0,
\ee
with the notation used in Section \ref{sec:nondeg}. We have that $Q'(r) \leq 0$, since $Q(r)$ is monotone decreasing. By Lemma \ref{lem:perron}, we conclude that $Q'(r)$ is (up to a sign) the unique ground state of $L_{+,1}$. Therefore, we have that either $Q'(r) < 0$ or $Q'(r) > 0$ for $r > 0$, where the first possibility is clearly ruled out. Hence $Q'(r) > 0$ for all $r > 0$, which shows that $Q(r)$ is strictly decreasing.

To show that $Q \in H^{2s+1}(\R^N)$ holds, we can simply follow the arguments in \cite{FrLe-10} where the case $N=1$ is considered. The smoothness $Q \in C^\infty$ in $\R^N$ follows from localizing the equation \eqref{eq:Q} on any open ball $B_R$ of fixed radius $R >0$. By the strict positivity $Q=Q(x-x_0) >0$, we see that $f : B_R \to \R$ with $f(x) = Q^{\alpha+1}(x)$ is a smooth function (with bounds depending on $R>0$). A bootstrap argument shows that $Q \in C^\infty$ on any ball $B_R$. 

Finally, we prove the lower and upper pointwise bounds for $Q$ stated in Proposition \ref{prop:Q}. First, we claim that $Q \in L^\infty$. (This is obvious if $s > N/2$ by Sobolev embedding.) Indeed, this follows from the $L^p$-properties for the resolvent kernel in Lemma \ref{lem:Greenunif} (ii). Using Young's inequality, we can iterate the identity $Q=((-\DD)^s + 1)^{-1} Q^{\alpha+1}$ finitely many times to conclude that $Q \in L^\infty$ holds. Next, by Proposition \ref{prop:regu}, we find that $Q \in C^{0,\beta}$ for any $\beta < 2s$. Since $Q \in L^2$, this shows that $Q$ vanishes at infinity. Now we note that $HQ = -Q$ with $H=(-\DD)^s + V$ with $V =- Q^{\alpha} \in L^\infty$ and $V \to 0$ as $|x| \to +\infty$. Hence we can apply Lemma \ref{lem:eigendecay} to find the upper bound $Q(x) \leq C \langle x \rangle^{-N-2s}$. Moreover, by modifying the arguments in the proof of Lemma \ref{lem:eigendecay} and using that $Q(x) > 0$ is positive, we also obtain the lower bound $Q(x) \geq C \langle x \rangle^{-N-2s}$. \end{proof}

\end{appendix}

\bibliographystyle{siam}
\bibliography{fractionalbib.bib}

\end{document}